 \theoremstyle{plain}
\newtheorem{thm}{Theorem}[section]
\newtheorem{lemma}[thm]{Lemma}
\newtheorem{prop}[thm]{Proposition}
\theoremstyle{definition}
\newtheorem{Example}{Example}
\newtheorem{remark}[thm]{Remark}
\theoremstyle{remark}
\numberwithin{equation}{section}
\def\CC{\mathbb{C}}
\def\FF{\mathbb{F}}
\def\QQ{\mathbb{Q}}
\def\TT{\mathbb{T}}
\def\ZZ{\mathbb{Z}}
\def\scH{\mathscr{H}}
\def\dim{\mathrm{dim}}
\def\Hom{\mathrm{Hom}}
\def\Tr{\mathrm{Tr}}
\definecolor{Gray}{gray}{0.5}
\DeclareMathOperator*{\Res}{Res}
\renewcommand{\@makefnmark}{\mbox{\textsuperscript{}}}
\title{On calibrated representations and the Plancherel Theorem for affine Hecke algebras}
\author{James Parkinson\footnote{Partially supported under the Australian Research Council (ARC) discovery grant DP110103205.}}
\date{November 28, 2013}
\begin{document}

\maketitle

\begin{abstract}
This paper has two main purposes. Firstly we generalise Ram's combinatorial construction \cite{ramcalib} of calibrated representations of the affine Hecke algebra to the multi-parameter case (including the non-reduced $BC_n$ case). We then derive the Plancherel formulae for all rank~$1$ and rank~$2$ affine Hecke algebras, using our calibrated representations to construct all representations involved. \footnote{2010 Mathematics Subject Classification: 20C08}\footnote{Keywords: Affine Hecke algebra, Calibrated representations, Plancherel measure.}
\end{abstract}



\section*{Introduction}

In this paper we extend Ram's combinatorial construction of calibrated representations of affine Hecke algebras to the multi-parameter case (including the non-reduced case), and we use these representations to derive explicit Plancherel formulae for all rank~$1$ and rank~$2$ affine Hecke algebras, following the work of Opdam (\cite{opdamtrace}, \cite{opdamhanalysis}).

Let us discuss the relevance and significance of each of these objectives. Affine Hecke algebras arise in the study of representation theory of groups~$G$ of Lie type defined over local fields such as $\FF_q((t))$ or $\QQ_p$. If $I$ is an \textit{Iwahori subgroup} of $G$ then complex representations of~$G$ with vectors fixed by $I$ can be studied via corresponding representations of the associated affine Hecke algebra $\scH=\mathcal{C}_c(I\backslash G/I)$ of continuous compactly supported $I$ bi-invariant complex valued functions on~$G$ (see \cite{borel}, \cite{matsumoto}). On the one hand the representation theory of affine Hecke algebras is well behaved (for example, the irreducible representations of these infinite dimensional algebras are all finite dimensional), while on the other hand the representation theory is rather delicate (for instance see the remarkable geometric classification of the irreducibles given in \cite{KL} using the $K$-theory of the flag variety). 

Affine Hecke algebras have a basis $\{T_w\mid w\in W\}$ indexed by elements of an \textit{affine Weyl group}~$W$, and depend on parameters $q_0,\ldots,q_n$ (one parameter for each Coxeter generator $s_0,\ldots,s_n$ of~$W$). The most studied case is the \textit{$1$-parameter case}, where $q_i=q$ for all $i$. It is to this case that the geometric classification mentioned above applies. In \cite{ramcalib} Ram introduced an explicit combinatorial construction of the class of \textit{calibrated representations} of $1$-parameter affine Hecke algebras. These are the modules which have a basis of simultaneous eigenvectors for all the elements of a natural large commutative subalgebra of the Hecke algebra.
While not all representations of an affine Hecke algebra are calibrated, the calibrated representations are of particular interest to combinatorialists since they are the generalisation of the classical combinatorial constructions for Weyl groups to (one parameter) Hecke algebras. Our first aim in this paper is to extend the construction of calibrated representations to the multi-parameter case (see Theorem~\ref{thm:repmain}). We suspect that a full classification of calibrated representations, along the lines of the one parameter case, is possible, although we defer this investigation to later work and instead the focus here is on constructing calibrated representations. In Section~\ref{sect:examples} we give some explicit examples of our calibrated representations, and in Section~\ref{sect:characters} we develop the character theory of calibrated representations in preparation for the Plancherel Theorems.

In the second part of this paper we derive the Plancherel Theorem for rank~$1$ and~$2$ affine Hecke algebras. The Plancherel Theorem is the spectral decomposition of the canonical trace functional $\Tr:\scH\to\CC$ with
$
\Tr(T_w)=\delta_{w,1}$ for $w$ in the affine Weyl group~$W$.
It is the analogue of the formula
$$
\Tr(a)=\sum_{\pi\in\mathrm{Irrep}(\scH)}m_{\pi}\chi_{\pi}(a)
$$
for finite dimensional Hecke algebras, where $m_{\pi}$ are the \textit{generic degrees} (see \cite[Chapter~11]{geck}). For affine Hecke algebras the sum becomes an integral over representations of a $C^*$-algebra completion of $\scH$, and the weights $m_{\pi}$ become the \textit{Plancherel measure}.

 The Plancherel Theorem has been proven in general by Heckman and Opdam \cite{heckmanopdam} and Opdam~\cite{opdamhanalysis} in a veritable tour-de-force paralleling Harish-Chandra's work~\cite{chandra} on the Plancherel Theorem for real and $p$-adic Lie groups (see also Reeder \cite{reeder}). The Plancherel Theorem has been further developed by Delorme-Opdam, Opdam, Opdam-Solleveld, and Ciubotaru-Kato-Kato (\cite{delorme}, \cite{opdam2}, \cite{opdam3}, \cite{Ciubotaru}). Therefore we should explain the value of our direct calculations in ranks~$1$ and~$2$.

Firstly, while the general formulation of the Plancherel Theorem in~\cite{opdamhanalysis} is essentially complete, there are some constants that are not explicitly computed (they are conjectured in \cite[Conjecture~2.27]{opdamhanalysis} to be rational numbers). Thus it is desirable to have a complete and direct calculation in ranks~$1$ and~$2$ which evaluate all constants involved. (We note that in the case of the affine Hecke algebra of the general linear group over a non-archimedean local field, the Plancherel Formula if entirely known, see \cite[Remark~5.6]{AP:05}). 

Secondly, for concrete applications of the Plancherel Theorem (for example, probabilistic calculations like in~\cite{PS}) one may need explicit constructions of the representations involved in the Plancherel formula. For the non-expert this may be a difficult task to fulfill, and so we believe that the combination of both parts of this paper, with a very concrete matching up of representations and terms in the Plancherel Theorem, is of value. In particular the use of calibrated representations makes the Plancherel Theorem accessible at a combinatorial level. 

Finally we hope that the explicit calculations may in some ways serve as an introduction to the general theory, and illustrate the complexity involved in the sophisticated work~\cite{opdamhanalysis}, \cite{Ciubotaru}. The starting point and general philosophy for our derivation of the Plancherel Theorems is similar to that in \cite{opdamhanalysis}, but since we restrict to the rank~$1$ and~$2$ cases the calculations can be carried out by hand. In fact our calculations form an extension of Matsumoto's influential rank~$1$ calculations~\cite[\S2.6]{matsumoto}, and hence provides a companion piece to~\cite{matsumoto} (see also Kutzko and Morris~\cite{KM:09}).

\section{Definitions and setup}

\subsection{Root systems and Weyl groups}

Let $R$ be an irreducible (not necessarily reduced) finite crystallographic root system with simple roots $\alpha_1,\ldots,\alpha_n$ in an $n$-dimensional real vector space $V$ with inner product $\langle\cdot,\cdot\rangle$. Let $R^+$ be the set of positive roots relative to the simple roots $\alpha_1,\ldots,\alpha_n$. Let $W_0$ be the \textit{Weyl group}; the subgroup of $GL(V)$ generated by the reflections $s_{\alpha}$, $\alpha\in R$, where
$
s_{\alpha}(\lambda)=\lambda-\langle\lambda,\alpha\rangle\alpha^{\vee}$ with $\alpha^{\vee}=2\alpha/\langle\alpha,\alpha\rangle$.
Thus $W_0$ is a Coxeter group with distinguished generators $s_1,\ldots,s_n$ (where $s_i=s_{\alpha_i}$). Let $w_0$ be the (unique) longest element of $W_0$. The \textit{dual root system} is $R^{\vee}=\{\alpha^{\vee}\mid \alpha\in R\}$. The \textit{coroot lattice} $Q$ and the \textit{coweight lattice} $P$ are
$$
Q=\ZZ\textrm{-span of }R^{\vee}\qquad\textrm{and}\qquad P=\ZZ\omega_1\oplus\cdots\oplus\ZZ\omega_n,
$$
where $\omega_1,\ldots,\omega_n$ are the \textit{fundamental coweights} defined by $\langle\alpha_i,\omega_j\rangle=\delta_{ij}$. The cone of \textit{dominant coweights} is $P^+=\ZZ_{\geq0}\omega_1\oplus\cdots\oplus\ZZ_{\geq 0}\omega_n$. Then $Q\subseteq P$, and $W_0$ acts on lattices $L$ with $Q\subseteq L\subseteq P$. The \textit{affine Weyl group} associated to $R$ and $L$ is
$$
W_L=L\rtimes W_0,
$$
where we identify $\lambda\in L$ with the translation $t_{\lambda}(x)=x+\lambda$.
Let $\varphi$ be the highest root of $R$, and let $s_0=t_{\varphi^{\vee}}s_{\varphi}$. Let $S=\{s_0,\ldots,s_n\}$. Then $W_Q=\langle S\rangle$ is a Coxeter group, and 
$$
W_L=W_Q\rtimes(L/Q),\qquad\textrm{where $L/Q$ is finite and abelian}.
$$
The \textit{length} $\ell(w)$ of $w\in W_Q$ is the minimum $\ell\geq 0$ such that $w$ can be written as a product of $\ell$ generators in $S$. The length of $w\in W_L$ is defined by $\ell(w)=\ell(w')$, where $w=w'\gamma$ with $w'\in W_Q$ and $\gamma\in L/Q$. Thus elements of $L/Q$ have length zero.

Write $R=R_1\cup R_2\cup R_3$ with 
\begin{align*}
R_1&=\{\alpha\in R\mid \alpha/2\notin R\textrm{ and }2\alpha\notin R\},&R_2&=\{\alpha\in R\mid 2\alpha\in R\},&R_3&=\{\alpha\in R\mid \alpha/2\in R\}.
\end{align*}
These sets are pairwise disjoint, and if $R$ is reduced then $R_1=R$ and $R_{2}=R_3=\emptyset$. Define
$$
R_0=R_1\cup R_2.
$$

The \textit{inversion set} of $w\in W$ is $R(w)=\{\alpha\in R_0^+\mid w^{-1}\alpha\in -R_0^+\}$. By \cite[VI, \S1]{bourbaki} we have
\begin{align}\label{eq:inversionset}
R(w)=\{\alpha_{i_1},s_{i_1}\alpha_{i_2},\ldots,s_{i_1}\cdots s_{i_{\ell-1}}\alpha_{i_{\ell}}\}\quad\textrm{whenever $w=s_{i_1}\cdots s_{i_{\ell}}$ is reduced.}
\end{align}

For each rank $n\geq 1$ there is exactly one irreducible non-reduced root system (up to isomorphism). This is the $BC_n$ system, and it can be realised in $\mathbb{R}^n$ by 
$$
R=\pm\{e_i-e_j,e_i+e_j,e_k,2e_k\mid 1\leq i<j\leq n\textrm{ and }1\leq k\leq n\},
$$
where the simple roots are $\alpha_i=e_i-e_{i+1}$ for $1\leq i<n$ and $\alpha_n=e_n$. The coroot lattice is spanned by $\alpha_1^{\vee},\ldots,\alpha_{n-1}^{\vee},\alpha_n^{\vee}/2$, and we have $P=Q$. Then $R_0$ is a root system of type $B_n$ with simple roots $\alpha_1,\ldots,\alpha_n$. We will always use the above conventions for indexing the simple roots of $BC_n$ root systems, and more generally we will adopt standard Bourbaki conventions~\cite{bourbaki} for the irreducible root systems.

\subsection{Parameter systems}

Let $q_0,q_1,\ldots,q_n\in\mathbb{C}^{\times}$ be such that $q_i=q_j$ whenever $s_i$ and $s_j$ are conjugate in $W_Q$. We call the sequence $(q_i)$ a \textit{parameter system}. By \cite[IV, \S5, No.5, Prop~5]{bourbaki} the product 
$$
q_w=q_{i_1}\cdots q_{i_{\ell}}\quad\textrm{(where $w=s_{i_1}\cdots s_{i_{\ell}}\in W_Q$ is reduced)}
$$
does not depend on the particular reduced expression for~$w$.
If $\alpha\in W_0\alpha_i\cap W_0\alpha_j$ then $s_i$ and $s_j$ are conjugate in $W_0$, and hence for $\alpha\in R_0$ we define
$$
q_{\alpha}=q_i\qquad\textrm{if $\alpha\in W_0 \alpha_i$}.
$$

Let $\CC[L]=\CC\textrm{-span }\{x^{\lambda}\mid \lambda\in L\}$ be the group algebra of $L$, with the group operation written multiplicatively as $x^{\lambda}x^{\mu}=x^{\lambda+\mu}$. In the field of fractions of $\CC[L]$ let (for $\alpha\in R_0$)
\begin{align*}
c_{\alpha}(x)=\begin{cases}\displaystyle{\frac{1-q_{\alpha}^{-1}x^{-\alpha^{\vee}}}{1-x^{-\alpha^{\vee}}}}&\textrm{if $\alpha\in R_1$}\\
\displaystyle{\frac{(1-q_0^{-1/2}q_n^{-1/2}x^{-\alpha^{\vee}/2})(1+q_0^{1/2}q_n^{-1/2}x^{-\alpha^{\vee}/2})}{1-x^{-\alpha^{\vee}}}}&\textrm{if $\alpha\in R_2$.}
\end{cases}
\end{align*}
(Note that if $\alpha\in R_2$ then $2\alpha\in R$, and so $(2\alpha)^{\vee}=\alpha^{\vee}/2$ is in~$L$.) Choose relatively prime elements $n_{\alpha}(x)$ and $d_{\alpha}(x)$ in $\CC[L]$ so that
$$
c_{\alpha}(x)=\frac{n_{\alpha}(x)}{d_{\alpha}(x)}.
$$
For example, $\alpha\in R_2$ and $q_n^{1/2}=q_0^{1/2}$ then $n_{\alpha}(x)=1-q_n^{-1}x^{-\alpha^{\vee}/2}$ and $d_{\alpha}(x)=1-x^{-\alpha^{\vee}/2}$. 

Similarly, let
\begin{align*}
c_{\alpha}'(x)=\begin{cases}
\displaystyle{\frac{1-q_{\alpha}^{-1}}{1-x^{-\alpha^{\vee}}}}&\textrm{if $\alpha\in R_1$}\\
\displaystyle{\frac{1-q_n^{-1}+q_n^{-1/2}(q_0^{1/2}-q_0^{-1/2})x^{-\alpha^{\vee}/2}}{1-x^{-\alpha^{\vee}}}}&\textrm{if $\alpha\in R_2$}.
\end{cases}
\end{align*}
Then, with $d_{\alpha}(x)$ as above,
$$
c_{\alpha}'(x)=\frac{n_{\alpha}'(x)}{d_{\alpha}(x)}
$$
for some $n_{\alpha}'(x)\in\CC[L]$ with $n_{\alpha}'(x)$ and $d_{\alpha}(x)$ relatively prime. 

Let
\begin{align*}
c(x)&=\prod_{\alpha\in R_0^+}c_{\alpha}(x),&n(x)&=\prod_{\alpha\in R_0^+}n_{\alpha}(x),&d(x)&=\prod_{\alpha\in R_0^+}d_{\alpha}(x).
\end{align*}
The expression $c(x)=n(x)/d(x)$ is the \textit{Macdonald $c$-function}. We write $c_i(x)$, $c_i'(x)$, $n_i(x)$, $n_i'(x)$, and $d_i(x)$ for $n_i(x)$ for $c_{\alpha_i}(x)$, $c_{\alpha_i}'(x)$, $n_{\alpha_i}(x)$, $n_{\alpha_i}'(x)$, and $d_{\alpha_i}(x)$ (respectively). 

\subsection{Affine Hecke algebras}

Standard references for affine Hecke algebras include \cite{lusztig}, \cite{macblue} and \cite{ram1}. With the above notation,
the \textit{affine Hecke algebra} $\scH_L$ with \textit{parameters} $q_0,\ldots,q_n$ is the algebra over $\CC$ with vector space basis $\{T_w\mid w\in W_L\}$ and relations
\begin{align*}
T_uT_v&=T_{uv}&&\textrm{if $\ell(uv)=\ell(u)+\ell(v)$}\\
T_{i}^2&=1+(q_i^{\frac{1}{2}}-q_i^{-\frac{1}{2}})T_i&&\textrm{for all $i=0,1,\ldots,n$},
\end{align*}
where we write $T_i=T_{s_i}$.

The above presentation is the \textit{Coxeter} presentation of $\scH_L$. There is a second important presentation which exploits the semidirect product structure $W_L=L\rtimes W_0$. This is 
the \textit{Bernstein presentation}, given by (\ref{eq:rel1})--(\ref{eq:rel4}) below. Each $\lambda\in L$ can be written as $\lambda=\mu-\nu$ with $\mu,\nu\in L\cap P^+$, and we define
$$
x^{\lambda}=T_{t_{\mu}}T_{t_{\nu}}^{-1}.
$$
It is not hard to see that this is well defined, and in particular $x^{\lambda}=T_{t_{\lambda}}$ if $\lambda$ is dominant. 

It can be shown \cite{macblue} that $\scH_L$ has vector space basis
$\{T_wx^{\lambda}\mid \lambda\in L,w\in W_0\}$ 
and relations
\begin{align}
\label{eq:rel1}T_i^2&=1+(q_i^{\frac{1}{2}}-q_i^{-\frac{1}{2}})T_i&&\textrm{for $i=1,\ldots,n$}\\
\label{eq:rel2}T_iT_jT_i\cdots&=T_jT_iT_j\cdots\quad\textrm{($m_{ij}$ factors)}&&\textrm{for $1\leq i<j\leq n$}\\
\label{eq:rel3}x^{\lambda}x^{\mu}&=x^{\lambda+\mu}&&\textrm{for all $\lambda,\mu\in L$}\\
\label{eq:rel4}T_ix^{\lambda}-x^{s_i\lambda}T_i&=q_i^{\frac{1}{2}}c_i'(x)(x^{\lambda}-x^{s_i\lambda})&&\textrm{for $1\leq i\leq n$ and $\lambda\in L$.}
\end{align}
Thus we see a copy of the group algebra
$\CC[L]$ of the lattice $L$ inside of $\scH_L$. The relation (\ref{eq:rel4}) is the \textit{Bernstein relation}. Since $s_i\lambda=\lambda-\langle \lambda,\alpha_i\rangle\alpha_i^{\vee}$ and $\langle\lambda,\alpha_i\rangle\in\ZZ$ the ``fraction'' $c_i'(x)(x^{\lambda}-x^{s_i\lambda})$ that appears in the Bernstein relation is actually an element of $\CC[L]$.

It is well known (see, for example, \cite[(4.2.10)]{macblue}) that the centre of $\scH_L$ is
$$
\CC[L]^{W_0}=\{f\in\CC[L]\mid w\cdot f=f\textrm{ for all $w\in W_0$}\}.
$$
This has powerful implications for the representation theory of $\scH_L$. For example it forces the irreducible representations to be finite dimensional (since the centre acts on irreducible representations by scalars, and it is evident that $\scH_L$ is finite dimensional over $\CC[L]^{W_0}$).

It is natural to seek modifications $\tau_w$ of $T_w$ which satisfy a ``simplified Bernstein relation'' 
\begin{align}
\label{eq:simpbernstein}
\tau_w x^{\lambda}=x^{w\lambda}\tau_w\qquad\textrm{for all $w\in W_0$ and $\lambda\in L$}.
\end{align}
Define elements $\tau_1,\ldots,\tau_n\in\scH_L$ by
$$
\tau_i=d_i(x)T_i-q_i^{\frac{1}{2}}n_i'(x).
$$
The Bernstein relation gives $\tau_ix^{\lambda}=x^{s_i\lambda}\tau_i$, and it can be shown (see \cite[Proposition~2.7]{ramcalib} for example) that for $w\in W_0$ the product
$$
\tau_w=\tau_{i_1}\cdots \tau_{i_{\ell}}\quad\textrm{is independent of the choice of reduced expression $w=s_{i_1}\cdots s_{i_{\ell}}$.}
$$
Thus the elements $\tau_w$, $w\in W_0$, satisfy (\ref{eq:simpbernstein}), and a direct calculation gives the useful formula
\begin{align}\label{eq:tausquared}
\tau_i^2=q_i\,n_i(x)n_i(x^{-1})\in\CC[L].
\end{align}

\subsection{Harmonic analysis for the affine Hecke algebra}

Suppose now that $q_0,q_1,\ldots,q_n>1$. Define an involution $*$ on $\scH_L$ and the \textit{canonical trace functional} $\Tr:\scH_L\to\CC$ by
\begin{align*}
\bigg(\sum_{w\in W_L}c_wT_w\bigg)^*=\sum_{w\in W_L}\overline{c_w}T_{w^{-1}}\quad\textrm{and}\quad \Tr\bigg(\sum_{w\in W_L}c_wT_w\bigg)=c_1.
\end{align*}
An induction on $\ell(v)$ shows that $\Tr(T_uT_v^*)=\delta_{u,v}$ for all $u,v\in W_L$, and so
\begin{align*}
\Tr(h_1h_2)=\Tr(h_2h_1)\qquad\textrm{for all $h_1,h_2\in\scH_L$}.
\end{align*}
Thus 
$
(h_1,h_2)=\Tr(h_1h_2^*)
$
defines an Hermitian inner product on $\scH_L$. Let $\|h\|_2=\sqrt{(h,h)}$. The algebra $\scH_L$ acts on itself by left multiplication, and the corresponding operator norm is
$$
\|h\|=\sup\{\|hx\|_2\,:\,x\in\scH_L,\|x\|_2\leq 1\}.
$$
Let $\overline{\scH_L}$ denote the completion of $\scH_L$ with respect to this norm. Thus $\overline{\scH_L}$ is a non-commutative $C^*$-algebra. This algebra is `liminal'. Even better, all irreducible representations of $\overline{\scH_L}$ are finite dimensional, and so by \cite[\S8.8]{dixmier} there exists a probability measure $\mu$ such that
\begin{align}
\label{eq:plancherelgeneral}\Tr(h)=\int_{\mathrm{spec}(\overline{\scH_L})}\chi_{\pi}(h)d\mu(\pi)\qquad\textrm{for all $h\in\overline{\scH_L}$.}
\end{align}
 The measure $\mu$ is the \textit{Plancherel measure}. Only those representations of $\scH_L$ which extend to the completion $\overline{\scH_L}$ appear in the Plancherel Theorem. It is known \cite[Corollary~6.2]{opdamhanalysis} that these are the \textit{tempered} representations of $\scH_L$ (see \cite[\S2.7]{opdamhanalysis} for the definition).

If $t\in\Hom(L,\CC^{\times})$ we write $t^{\lambda}=t(\lambda)$. The Weyl group $W_0$ acts on $\Hom(L,\CC^{\times})$ by the formula $(wt)^{\lambda}=t^{w^{-1}\lambda}$. Following \cite{opdamtrace}, define a function $G_t:\scH_L\to\CC$ by
\begin{align}\label{eq:Gdefn}
G_t(h)=\sum_{\mu\in L}t^{-\mu}\Tr(x^{\mu} h)
\end{align}
whenever the series converges. From \cite{opdamtrace} we have the following (see also \cite[(3.9)]{opdamhanalysis}).

\begin{thm}\label{thm:prelim_main} The series $G_t(h)$ is absolutely convergent for all $h\in\scH_L$ whenever $|t^{\alpha_i^{\vee}}|<q_i^{-1}$ for $(R,i)\neq (BC_n,n)$ and $|t^{\alpha_n^{\vee}}|<q_0^{-1}q_n^{-1}$ for $(R,i)=(BC_n,n)$. Moreover,
\begin{align}\label{eq:prelim_main1}
G_t(h)=\frac{g_t(h)}{q_{w_0}c(t)c(t^{-1})d(t)}
\end{align}
where for each fixed $h$ the function $g_t(h)$ has a analytic continuation in the $t$-variable to $\Hom(L,\CC^{\times})$. Moreover, $g_t(h)$ satisfies
\begin{enumerate}
\item $g_t(h)$ is a polynomial in $\{t^{\lambda}\mid \lambda\in L\}$ (for fixed $h\in\scH_L$),
\item $g_t(1)=d(t)$ for all $t\in\Hom(L,\CC^{\times})$, and
\item $g_t(x^{\lambda}hx^{\mu})=t^{\lambda+\mu}g_t(h)$ for all $\lambda,\mu\in L$ and all $h\in\scH_L$.
\end{enumerate}
\end{thm}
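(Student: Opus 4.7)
The plan has four steps: (i) establish property~(3) by direct manipulation of the defining sum, reducing the problem to $h=T_w$ with $w\in W_0$; (ii) control the convergence of $G_t(T_w)$ using the Bernstein presentation; (iii) derive the closed form \eqref{eq:prelim_main1} via the $\tau$-intertwiners and a Macdonald-type symmetrization; and (iv) deduce polynomiality~(1) and the normalization~(2).

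For the reduction, using $\Tr(h_1h_2)=\Tr(h_2h_1)$ together with $x^\lambda x^\mu=x^{\mu+\lambda}$ gives
$G_t(x^\lambda h x^\mu)=\sum_\nu t^{-\nu}\Tr(x^{\nu+\lambda+\mu}h)=t^{\lambda+\mu}G_t(h)$,
which is property~(3) for $G_t$, and hence for $g_t$ since the denominator in~\eqref{eq:prelim_main1} does not depend on~$h$. Since $\{T_wx^\lambda\mid w\in W_0,\lambda\in L\}$ is a basis of $\scH_L$, it suffices to prove absolute convergence and the form~\eqref{eq:prelim_main1} for $h=T_w$, $w\in W_0$. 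For convergence, iterating the Bernstein relation~\eqref{eq:rel4} along a reduced expression for~$w$ yields $T_wx^\mu=\sum_{v\le w}p_{v,\mu}(x)T_v$ with $p_{v,\mu}\in\CC[L]$ whose weights lie in the $W_0$-orbit convex hull of $\mu$; only the coefficient $v=1$ contributes to $\Tr(T_wx^\mu)$, and the resulting weight estimate gives convergence when $|t^{\alpha_i^\vee}|<q_i^{-1}$. In the non-reduced $BC_n$ case, the factor $q_0^{1/2}q_n^{1/2}$ appearing in $a_n(x)$ (via the $R_2$ case of \S1.2) sharpens the threshold at the long root to $|t^{\alpha_n^\vee}|<q_0^{-1}q_n^{-1}$.

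For the closed form, I would invert the definition $\tau_i=(1-x^{-\alpha_i^\vee})T_i-a_i(x)$ to write $T_w=\sum_{v\le w}r_{v,w}(x)\tau_v$ with rational coefficients $r_{v,w}(x)$ whose denominators are products of $d_\alpha(x)$. Property~\eqref{eq:simpbernstein} then converts $\sum_\mu t^{-\mu}\Tr(\tau_v x^\mu)$ into multi-geometric sums over $W_0$-translates of $L^+$, and symmetrization over $W_0$ yields $(q_{w_0}c(t)c(t^{-1}))^{-1}$ as the common denominator, producing the Macdonald $c$-function combination in~\eqref{eq:prelim_main1}. The numerator $g_t(h)$ is then \emph{a priori} rational in~$t$; its polynomiality~(1) follows because the apparent simple poles along the walls $t^{\alpha^\vee}=1$ cancel in the Weyl-symmetrized sum. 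Finally, property~(2) is a direct evaluation: $G_t(1)=\sum_\mu t^{-\mu}\Tr(x^\mu)$ is computed by splitting $L$ into $W_0$-translates of $L^+$ (on $L^+$ only $\mu=0$ contributes since $\Tr(T_{t_\mu})=\delta_{\mu,0}$, while the remaining translates contribute via the expansion of $T_v^{-1}$ in the basis), yielding $G_t(1)=(q_{w_0}c(t)c(t^{-1}))^{-1}$ and hence $g_t(1)=d(t)$.

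The main obstacle is the symmetrization in step~(iii): identifying $c(t)c(t^{-1})$ as the precise outcome of the chamber sums and verifying that all apparent simple poles cancel amounts to a Macdonald constant-term style identity. Handling the non-reduced $BC_n$ case simultaneously requires carefully tracking the $R_2$-factor $(1-q_0^{-1/2}q_n^{-1/2}x^{-\alpha^\vee/2})(1+q_0^{1/2}q_n^{-1/2}x^{-\alpha^\vee/2})$ appearing in $n_\alpha(x)$, but this is bookkeeping rather than a conceptual obstacle; the rest of the argument is standard given the Bernstein presentation and the $\tau$-intertwiner formalism.
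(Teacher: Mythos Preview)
The paper does not give its own proof of this theorem: it is quoted verbatim from Opdam's work, with the attribution ``From \cite{opdamtrace} we have the following (see \cite[(3.9)]{opdamhanalysis})'' immediately preceding the statement. So there is no in-paper argument to compare against; the result is treated as an input from the literature.

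Your outline is broadly in the spirit of Opdam's original argument, and your reduction to $h=T_w$ via property~(3) is correct. However, two of your steps are genuine gaps rather than routine bookkeeping. First, in step~(iii) you assert that symmetrizing the multi-geometric sums over $W_0$ ``yields $(q_{w_0}c(t)c(t^{-1}))^{-1}$ as the common denominator'', but this is precisely the content of the theorem and is not a formal consequence of the $\tau$-intertwiner relations alone; in Opdam's paper this step goes through the Eisenstein-type functionals and their functional equations, and the cancellation of poles along the walls $t^{\alpha^\vee}=1$ is established there, not deduced from a general symmetrization principle. Second, your computation of $G_t(1)$ in step~(iv) is the Macdonald--Matsumoto formula for the Poincar\'e series of the affine Weyl group (equivalently, the evaluation of the Satake transform of the unit), which is itself a nontrivial result; writing ``the remaining translates contribute via the expansion of $T_v^{-1}$ in the basis'' does not constitute a proof, since for non-dominant $\mu$ the element $x^\mu$ is a genuinely complicated linear combination of $T_w$'s and one must actually carry out the chamber-by-chamber summation to recover $c(t)c(t^{-1})$.

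In short: your property~(3) and the convergence sketch are fine, but steps~(iii) and~(iv) each hide a substantial identity that you have named rather than proved. If you want a self-contained argument you should either reproduce the relevant parts of \cite{opdamtrace} or, for~(iv), cite the Macdonald formula directly.
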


\begin{remark}\label{rem:explicit}
(a) Note that $t^{\lambda}g_t(\tau_w)=g_t(\tau_w x^{\lambda})=g_t(x^{w\lambda}\tau_w)=t^{w\lambda}g_t(\tau_w),
$
and so if $wt\neq t$ then
$$g_t(\tau_wx^{\lambda})=\delta_{w,1}t^{\lambda}d(t). 
$$
Then by condition 1 in the theorem this formula holds for all $t\in\Hom(L,\CC^{\times})$.

(b) The three conditions in the theorem completely determine $g_t(h)$. For example consider the $\tilde{A}_2$ case. It is sufficient to calculate $g_t(T_w)$ for each $w\in W_0$, because $g_t(T_wx^{\lambda})=t^{\lambda}g_t(T_w)$. Write $Q=q^{\frac{1}{2}}-q^{-\frac{1}{2}}$. Applying $g_t$ to the Bernstein relation $T_1x^{\alpha_1^{\vee}}=x^{-\alpha_1^{\vee}}T_1+Q(1+x^{\alpha_1^{\vee}})$ gives
$$
g_t(T_1)=Q(1-t^{-\alpha_1^{\vee}})^{-1}d(t)=Q(1-t^{-\alpha_2^{\vee}})(1-t^{-\alpha_1^{\vee}-\alpha_2^{\vee}}).
$$
Similarly, $g_t(T_2)=Q(1-t^{-\alpha_1^{\vee}})(1-t^{-\alpha_1^{\vee}-\alpha_2^{\vee}})$, $g_t(T_1T_2)=g_t(T_2T_1)=Q^2(1-t^{-\alpha_1^{\vee}-\alpha_2^{\vee}})$, and $g_t(T_1T_2T_1)=Q^3+Q(1-t^{-\alpha_1^{\vee}})(1-t^{-\alpha_2^{\vee}})$, making (\ref{eq:prelim_main1}) completely explicit in type~$\tilde{A}_2$.
\end{remark}

Let
\begin{align}\label{eq:f}
f_t(h)=\frac{g_t(h)}{d(t)}.
\end{align}
Note that $f_t(h)$ may have poles at points where $t^{\alpha^{\vee}}=1$ for some $\alpha\in R_0$. Fix a $\ZZ$-basis $\lambda_1,\ldots,\lambda_n$ of $L$. From (\ref{eq:Gdefn}) and (\ref{eq:prelim_main1}) we have
\begin{align}\label{eq:prelim_main}
\Tr(h)=\frac{1}{q_{w_0}}\int_{a_1\TT}\cdots\int_{a_n\TT} \frac{f_t(h)}{c(t)c(t^{-1})}\,dt_1\cdots dt_n
\end{align}
where $t_i=t^{\lambda_i}$, $dt_i$ is Haar measure on the circle group $\TT$, and where $a_1,\ldots,a_n>0$ are chosen such that if $t\in\Hom(L,\CC^{\times})$ with $|t^{\lambda_i}|=a_i$ for each $i$ then 
$|t^{\alpha_i^{\vee}}|<q_i^{-1}$ (if $(R,i)\neq (BC_n,n)$) and $|t^{\alpha_n^{\vee}}|<q_0^{-1}q_n^{-1}$ (if $(R,i)=(BC_n,n)$). Formula (\ref{eq:prelim_main}) appears in \cite[Theorem~3.7]{opdamhanalysis}, and is the starting point of the Plancherel Theorem.

\section{Representations of affine Hecke algebras}

Let $M$ be a finite dimensional $\scH_L$-module. For each $t\in\Hom(L,\CC^{\times})$ let 
\begin{align*}
M_t&=\{v\in M\mid x^{\lambda}\cdot v=t^{\lambda}v\quad\textrm{for all $\lambda\in L$}\}\\
M_t^{\mathrm{gen}}&=\{v\in M\mid \textrm{for each $\lambda\in L$ there is a $k>0$ such that $(x^{\lambda}-t^{\lambda})^k\cdot v=0$}\}
\end{align*}
be the \textit{$t$-weight space} and the \textit{generalised $t$-weight space} respectively. Each finite dimensional $\scH_L$-module $M$ decomposes into a direct sum of generalised $t$-weight spaces
$$
M=\bigoplus_{t\in\mathrm{supp}(M)}M_t^{\mathrm{gen}}
$$
where $\mathrm{supp}(M)=\{t\in\Hom(L,\CC^{\times})\mid M_t^{\mathrm{gen}}\neq 0\}$ is the \textit{support} of $M$.

A finite dimensional $\scH_L$-module $M$ is \textit{calibrated} if 
$$
M_t^{\mathrm{gen}}=M_t\qquad\textrm{for all $t\in\mathrm{supp}(M)$}.
$$
(In the literature this is also refereed to as \textit{tame}). The main purpose of the first half of this paper is to construct calibrated irreducible representations of general affine Hecke algebras. We suspect that a complete classification of calibrated representations along the lines of the $1$-parameter case is possible (perhaps with some restrictions like $W_t=W_{(t)}$), although such a classification requires detailed information about the representation theory of rank~$2$ (multi-parameter) affine Hecke algebras, and this would take us beyond the scope of the present paper. Thus the focus of this paper is on construction, and the question of classification will be pursued in later work.

The elements $\tau_i\in\scH_L$, considered as operators on a representation, are often called \textit{intertwining operators} because of the following fundamental and important fact.

\begin{lemma}\label{lem:bijection} Let $1\leq i\leq n$. Let $M$ be a finite dimensional $\scH_L$-module, and suppose that $t\in\mathrm{supp}(M)$. Then as operators,
$$
\tau_i:M^{\mathrm{gen}}_t\to M^{\mathrm{gen}}_{s_it}\qquad\textrm{and}\qquad\tau_i:M_{s_it}^{\mathrm{gen}}\to M_t^{\mathrm{gen}}.
$$
Moreover, $n_i(t)n_i(t^{-1})\neq 0$ if and only if each operator is bijective. 
\end{lemma}

\begin{proof}
Let $m\in M_t^{\mathrm{gen}}$. By (\ref{eq:simpbernstein}) we compute 
$$
(x^{\lambda}-(s_it)^{\lambda})^k\tau_i\cdot m=\tau_i(x^{s_i\lambda}-t^{s_i\lambda})^k\cdot m=0
$$
for sufficiently large~$k$. Thus $\tau_i\cdot m\in M_{s_it}^{\mathrm{gen}}$. For the final claim, note that by~(\ref{eq:tausquared}) the operator $\tau_i^2:M_t^{\mathrm{gen}}\to M_t^{\mathrm{gen}}$ is given by $\tau_i^2\cdot m=q_in_i(t)n_i(t^{-1})m$. 
\end{proof}

By Schur's Lemma (see~\cite{varadarajan}) the centre $\CC[L]^{W_0}$ of $\scH_L$ acts on an irreducible module $M$ by scalars. It follows that there exists $t\in \Hom(L,\CC^{\times})$ such that
$$
p(x)\cdot v=p(t)v\qquad\textrm{for all $p(x)\in\CC[L]^{W_0}$ and all $v\in M$.}
$$
The element $t$ is only defined up to $W_0$ orbits. The orbit $W_0t$ is called the \textit{central character} of~$M$, although as is customary we will usually refer to any $t'\in W_0t$ as `the' central character of~$M$. A central character $t\in\Hom(L,\CC^{\times})$ is called \textit{regular} if $t^{\alpha^{\vee}}\neq 1$ for all $\alpha\in R_0$.

\subsection{Principal series representations}

The large commutative subalgebra $\CC[L]$ of $\scH_L$ can be used to construct finite dimensional representations of~$\scH_L$. The \textit{principal series representation} with \textit{central character} $t\in\Hom(L,\CC^{\times})$ is
$$
M(t)=\mathrm{Ind}_{\CC[L]}^{\scH_L}(\CC v_t)=\scH_L\otimes_{\CC[L]}(\CC v_t),
$$
where $\CC v_t$ is the $1$-dimensional representation of $\CC[L]$ with $x^{\lambda}\cdot v_t=t^{\lambda} v_t$ for all $\lambda\in L$. It is clear that this representation is $|W_0|$-dimensional, and that $\{T_w\otimes v_t\mid w\in W_0\}$ is a basis of $M(t)$.

For $t\in\Hom(L,\CC^{\times})$ define
\begin{align*}
N(t)&=\{\alpha\in R_0^+\mid n_{\alpha}(t)n_{-\alpha}(t)=0\}\\
D(t)&=\{\alpha\in R_0^+\mid d_{\alpha}(t)=0\}.
\end{align*}
Note that $N(t)$ and $D(t)$ are closely related to the zeros of the numerator and denominator of the Macdonald $c$-function (respectively). 

For $t\in\Hom(L,\CC^{\times})$, let
\begin{align*}
W_t&=\{w\in W_0\mid wt=t\}\\
W_{(t)}&=\langle\{s_{\alpha}\mid \alpha\in D(t)\}\rangle.
\end{align*}
Note that $W_{(t)}$ is a normal subgroup of $W_t$ (since $ws_{\alpha}w^{-1}=s_{w\alpha}$). Moreover, if $L=P$ then necessarily $W_{(t)}=W_t$ (see \cite[\S4.2, 5.3]{steinberg2}).

The following Theorem of Kato \cite[Theorem~2.2]{kato} is fundamental.

\begin{thm}\label{thm:kato} The module $M(t)$ is irreducible if and only if 
$N(t)=\emptyset$ and $W_t=W_{(t)}$.
\end{thm}

The fundamental importance of the principal series representations is highlighted by the following fact (see, for example, \cite[Proposition~2.6]{ramcalib}).

\begin{prop}\label{prop:quotient}
If $M$ is an irreducible representation of $\scH_L$ with central character $t$ then $M$ is a quotient of $M(t)$. In particular $\dim(M)\leq |W_0|$.
\end{prop}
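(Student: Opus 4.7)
The strategy is the standard Frobenius reciprocity / cyclic vector argument, using commutativity of $\CC[L]$ together with the definition of central character.

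First I would observe that $M$ is finite dimensional, as noted in the paragraph preceding Theorem~\ref{thm:kato}: since $\scH_L$ is finitely generated over its centre $\CC[L]^{W_0}$, every irreducible $\scH_L$-module is finite dimensional by Schur's lemma. Thus the commutative subalgebra $\CC[L]$ acts on the finite dimensional vector space $M$, and since $\CC$ is algebraically closed we may simultaneously triangularise this action and pick out a nonzero common eigenvector $v\in M$. Writing $x^{\lambda}\cdot v=(t')^{\lambda}v$ for $\lambda\in L$, the assignment $\lambda\mapsto (t')^{\lambda}$ defines a character $t'\in\Hom(L,\CC^{\times})$.

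Next I would match $t'$ to the central character. By the definition of central character, every $p(x)\in\CC[L]^{W_0}$ acts on $M$ by the scalar $p(t)$. Applying this to $v$ and comparing with the evaluation at $t'$ gives $p(t')=p(t)$ for all $W_0$-invariants $p\in\CC[L]$. Since $\CC[L]^{W_0}$ separates $W_0$-orbits on $\Hom(L,\CC^{\times})$, this forces $t'\in W_0 t$. As central characters are only defined up to the $W_0$-orbit, we may (replacing the chosen representative if necessary) assume $t'=t$.

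Now the cyclic vector construction: the one-dimensional $\CC[L]$-module $\CC v_t$ maps into $M$ as $\CC[L]$-modules via $v_t\mapsto v$. By the universal property of the induced module $M(t)=\scH_L\otimes_{\CC[L]}\CC v_t$, this extends uniquely to an $\scH_L$-module homomorphism $\varphi:M(t)\to M$, sending $h\otimes v_t\mapsto h\cdot v$. Since $\varphi(1\otimes v_t)=v\neq 0$, the map $\varphi$ is nonzero, and irreducibility of $M$ forces $\varphi$ to be surjective. Hence $M\cong M(t)/\ker\varphi$ is a quotient of $M(t)$, and the dimension bound $\dim(M)\leq\dim M(t)=|W_0|$ follows immediately from the basis $\{T_w\otimes v_t\mid w\in W_0\}$ of $M(t)$ described before the proposition.

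There is no real obstacle here; the only subtle point to make explicit is that the central character is only defined up to the $W_0$-action, so the $t$ appearing in the statement must be allowed to be replaced by any element of its $W_0$-orbit, which matches the convention spelled out in the paragraph immediately preceding Theorem~\ref{thm:kato}.
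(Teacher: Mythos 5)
Your proof is correct and is the standard Frobenius reciprocity argument that the paper implicitly relies on (it cites \cite[Proposition~2.6]{ramcalib} rather than reproving the result). The chain — finite dimensionality via Schur's lemma and finiteness of $\scH_L$ over its centre, a common eigenvector for the commutative subalgebra $\CC[L]$, matching the eigencharacter to the $W_0$-orbit of $t$ using that $\CC[L]^{W_0}$ separates $W_0$-orbits, and then inducing to get a nonzero hence surjective map $M(t)\to M$ — is exactly the intended route, and you are right to flag explicitly that $t$ is only determined up to $W_0$-orbit, which is the convention the paper adopts just before Theorem~\ref{thm:kato}.
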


\subsection{A combinatorial construction of irreducible calibrated $\scH_L$-modules}\label{sect:combconst}

Following \cite{ramcalib}, the \textit{calibration graph} of $t\in\Hom(L,\CC^{\times})$ is the graph $\Gamma(t)$ with 
\begin{align*}
&\textrm{vertex set $\{wt\mid w\in W_0\}$, and}\\
&\textrm{edges $\{wt,s_iwt\}$ if and only if $\alpha_i\notin N(wt)$.}
\end{align*}
For each $J\subseteq N(t)$ define
$$
F_J(t)=\{w\in W_0\mid R(w^{-1})\cap N(t)=J\textrm{ and }R(w^{-1})\cap D(t)=\emptyset\}.
$$
By the argument in~\cite[Theorem~2.14]{ramcalib}, if $W_t=W_{(t)}$ then the connected components of $\Gamma(t)$ are precisely the sets
\begin{align}\label{eq:concal}
\{wt\mid w\in F_J(t)\}\qquad\textrm{such that $J\subseteq N(t)$ and $F_J(t)\neq \emptyset$}.
\end{align}

\begin{remark}\label{rem:calib} We note that if $W_t=W_{(t)}$ then the geometric argument in \cite[Theorem~2.14]{ramcalib} also shows that if $w,v\in F_J(t)$, and if $wv^{-1}=s_{i_1}\cdots s_{i_{\ell}}$ is a reduced expression, then each element
$$
w,\,\,s_{i_1}w,\,\,s_{i_2}s_{i_1}w,\,\,\ldots,\,\,s_{i_{\ell}}\cdots s_{i_2}s_{i_1}w=v\quad\textrm{is in $F_J(t)$}.
$$
(Because the ``smaller regions'' in the proof of \cite[Theorem~2.14]{ramcalib} which correspond to the connected components of the calibration graph are convex in the sense that they are intersections of half spaces, and hence by~\cite[Proposition~2.8]{ronan} all minimal length paths between $w$ and $v$ are contained in this region). Thus $F_J(t)$ is `geodesically closed' in the (dual of the) underlying Coxeter complex.
\end{remark}

\begin{prop}\label{prop:calib} If $M$ is a finite dimensional $\scH_L$-module then 
$$\dim(M_t^{\mathrm{gen}})=\dim(M_{t'}^{\mathrm{gen}})$$ whenever $t$ and $t'$ are in the same connected component of the calibration graph~$\Gamma(t)$.
\end{prop}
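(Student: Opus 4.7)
The plan is to use the intertwiners $\tau_i$ as dimension-preserving maps between generalised weight spaces, so that each edge of $\Gamma(t)$ produces an isomorphism between the corresponding weight spaces. The commutation rule (\ref{eq:simpbernstein}) extends by linearity to $\tau_i\,p(x)=(s_i\!\cdot\! p)(x)\,\tau_i$ for any $p\in\CC[L]$, where $(s_i\!\cdot\! p)(x)$ is the element of $\CC[L]$ obtained by replacing each $x^{\lambda}$ by $x^{s_i\lambda}$. From this it is immediate that $\tau_i$ carries $M_t^{\mathrm{gen}}$ into $M_{s_it}^{\mathrm{gen}}$: if $v\in M_t^{\mathrm{gen}}$ and $\lambda\in L$, choose $k\geq 1$ with $(x^{s_i\lambda}-t^{s_i\lambda})^k v=0$; then
\[
(x^{\lambda}-(s_it)^{\lambda})^k\,\tau_i v \;=\; \tau_i\,(x^{s_i\lambda}-t^{s_i\lambda})^k v \;=\; 0.
\]

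Suppose now $\alpha_i\notin N(t)$, so $n_{\alpha_i}(t)n_{-\alpha_i}(t)\neq 0$. A direct inspection of the defining formulas shows $n_{-\alpha}(t)=n_{\alpha}(t^{-1})$ in both the $R_1$ and $R_2$ (i.e.\ $BC_n$) cases, so $n_i(t)n_i(t^{-1})\neq 0$. By (\ref{eq:tausquared}), $\tau_i^2=q_i\,n_i(x)n_i(x^{-1})\in\CC[L]$, and any $p(x)\in\CC[L]$ with $p(t)\neq 0$ acts invertibly on $M_t^{\mathrm{gen}}$ — write $p(x)=p(t)(1+N)$ where $N$ is nilpotent on the generalised weight space, and invert by the geometric series. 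Hence $\tau_i^2$ acts invertibly on $M_t^{\mathrm{gen}}$, so $\tau_i\colon M_t^{\mathrm{gen}}\to M_{s_it}^{\mathrm{gen}}$ is injective. A similarly short calculation gives $n_{\pm\alpha_i}(s_it)=n_{\mp\alpha_i}(t)$, so $\alpha_i\notin N(s_it)$ as well, and the same argument applied to $s_it$ shows that $\tau_i\colon M_{s_it}^{\mathrm{gen}}\to M_t^{\mathrm{gen}}$ is also injective. Therefore $\dim M_t^{\mathrm{gen}}=\dim M_{s_it}^{\mathrm{gen}}$.

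The proposition then follows by induction along any path $t=t_0,t_1,\ldots,t_{\ell}=t'$ in $\Gamma(t)$, each consecutive pair $(t_{j-1},t_j)$ being an edge (i.e.\ $t_j=s_{i_j}t_{j-1}$ with $\alpha_{i_j}\notin N(t_{j-1})$); the equality above transfers dimensions one step at a time. No step is conceptually difficult; the only bookkeeping items worth double-checking are the identity $n_{-\alpha}(t)=n_{\alpha}(t^{-1})$ in the non-reduced case (to handle the modified $R_2$ formulas) and the assertion that a nonvanishing element of $\CC[L]$ acts invertibly on a generalised weight space even though the weight is only generalised.
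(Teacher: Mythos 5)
Your proof is correct and takes essentially the same route as the paper, which also observes (citing Ram) that $\tau_i\colon M_t^{\mathrm{gen}}\to M_{s_it}^{\mathrm{gen}}$ is a bijection by virtue of~(\ref{eq:tausquared}) and then transports dimensions along paths in $\Gamma(t)$. You merely spell out the details that the paper leaves implicit (that $\tau_i$ preserves generalised weight spaces, that a nonvanishing element of $\CC[L]$ acts invertibly on $M_t^{\mathrm{gen}}$, and that $\alpha_i\notin N(s_it)$); note also that the paper's sketch adds the hypothesis $\alpha_i\notin D(t)$, which your argument shows is unnecessary since $\tau_i^2$ acts invertibly whenever $\alpha_i\notin N(t)$ (and in the $D(t)$ case $s_it=t$ makes the claim trivial anyway).
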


\begin{proof} 
If $\alpha_i\notin N(t)$ then Lemma~\ref{lem:bijection} gives $\dim(M_t^{\mathrm{gen}})=\dim(M_{s_it}^{\mathrm{gen}})$. Hence the result.
\end{proof}

Let $R_{ij}$ be the rank~$2$ subsystem of $R$ generated by the simple roots $\alpha_i$ and $\alpha_j$. That is, $R_{ij}$ is the intersection of $R$ with the $\ZZ$-span of $\{\alpha_i,\alpha_j\}$. We say that a weight $t\in\Hom(L,\CC^{\times})$ is \textit{$(i,j)$-regular} if $(wt)^{\alpha_i^{\vee}}\neq 1$ and $(wt)^{\alpha_j^{\vee}}\neq 1$ for all $w\in W_{ij}=\langle s_i,s_j\rangle$, and $(i,j)$-\textit{calibratable} if one of the following conditions holds:
\begin{enumerate}
\item[(1)]  The weight $t$ is $(i,j)$-regular.
\item[(2)] $R_{ij}$ is of type $C_2$ (assume $\alpha_i$ short and $\alpha_j$ long) with 
\begin{enumerate}
\item[(a)] $q_i=q_j$ and $(t^{\alpha_i^{\vee}},t^{\alpha_j^{\vee}})=(q_i,q_i^{-1})$ or $(q_i^{-1},q_i)$, or
\item[(b)] $q_i=q_j^2$ and $(t^{\alpha_i^{\vee}},t^{\alpha_j^{\vee}})=(q_j^{-2},q_j)$ or $(q_j^2,q_j)$. 
\end{enumerate}
\item[(3)] $R_{ij}$ is of type $G_2$ (assume $\alpha_i$ short and $\alpha_j$ long) with 
\begin{enumerate}
\item[(a)] $q_i=q_j$ and $(t^{\alpha_i^{\vee}},t^{\alpha_2^{\vee}})=(q_i^{-1},q_i),(q_i,q_i^{-1}),(q_i^2,q_i^{-1}),(q_i^{-2},q_i)$, or
\item[(b)] $q_i=q_j^3$ and $(t^{\alpha_i^{\vee}},t^{\alpha_j^{\vee}})=(q_j^3,q_j^{-1}),(q_j^{-3},q_j),(q_j^{-3},q_j^2),(q_j^{3},q_j^{-2})$.
\end{enumerate}
\item[(4)] $R_{ij}$ is of type $BC_2$ (assume $\alpha_i$ middle-length and $\alpha_j$ short) with 
\begin{enumerate}
\item[(a)] $q_i=q_0q_j$ and $(t^{\alpha_i^{\vee}},t^{\alpha_j^{\vee}/2})=(q_0q_j,q_0^{-1/2}q_j^{-1/2}),(q_0^{-1}q_j^{-1},q_0^{1/2}q_j^{1/2})$, or
\item[(b)] $q_i=q_0q_j^{-1}$ or $q_0^{-1}q_j$ and $(t^{\alpha_i^{\vee}},t^{\alpha_j^{\vee}/2})=(q_0^{-1}q_j,-q_0^{1/2}q_j^{-1/2}),(q_0q_j^{-1},-q_0^{-1/2}q_j^{1/2})$, or
\item[(c)] $q_i=q_0^{1/2}q_j^{1/2}$ and $(t^{\alpha_i^{\vee}},t^{\alpha_j^{\vee}/2})=(q_i^{-1},q_i),(q_i,q_i^{-1})$, or
\item[(d)] $q_i=q_0^{-1/2}q_j^{1/2}$ or $q_0^{1/2}q_j^{-1/2}$ and $(t^{\alpha_i^{\vee}},t^{\alpha_j^{\vee}/2})=(q_i^{-1},-q_i),(q_i,-q_i^{-1})$.
\end{enumerate}
\end{enumerate}
Conditions (1), (2)(a) and (3)(a) are equivalent to Ram's definition of calibratable in the $1$-parameter case. Note that if $R_{ij}$ is of type $BC_2$ then the underlying root system~$R$ is necessarily non-reduced, and hence is of type $BC_n$, and so $1\leq i\leq n-1$ and $j=n$ (since $\alpha_j$ is assumed to be short).

In the following theorem we construct a class of irreducible calibrated $\scH_L$-modules.

\begin{thm}\label{thm:repmain} Let $t\in\Hom(L,\CC^{\times})$, and let $J\subseteq N(t)$. Suppose that $F_J(t)\neq\emptyset$ and that each~$wt$ with $w\in F_J(t)$ is $(i,j)$-calibratable for each pair $(\alpha_i,\alpha_j)$ of simple roots of $R$. Let $M_J(t)$ be the vector space over $\CC$ with basis $\{e_{wt}\mid w\in F_J(t)\}$, and define linear operators $\tilde{T}_i$ ($i=1,\ldots,n$), $\tilde{x}^{\lambda}$ ($\lambda\in L$), on $M_J(t)$ by linearly extending the formulae
\begin{align}
\label{eq:diagonal}\tilde{x}^{\lambda}e_{wt}&=(wt)^{\lambda}e_{wt}&&\lambda\in L\\
\label{eq:Taction}\tilde{T}_ie_{wt}&=q_i^{\frac{1}{2}}c_{i}'(wt)e_{wt}+q_i^{\frac{1}{2}}c_{i}(wt)e_{s_iwt}&&1\leq i\leq n,
\end{align}
with the convention that $e_{vt}=0$ if $v\notin F_J(t)$. Then the map $\scH_L\to \mathrm{End}(M_J(t))$ with $T_i\mapsto \tilde{T}_i$ and $x^{\lambda}\mapsto\tilde{x}^{\lambda}$ defines an irreducible calibrated representation of $\scH_L$. 
\end{thm}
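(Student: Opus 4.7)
My plan is to follow the argument of \cite[Theorem~3.1]{ramcalib} closely. Using the Bernstein presentation, it suffices to verify that $\tilde{T}_i$ and $\tilde{x}^\lambda$ satisfy relations (\ref{eq:rel1})--(\ref{eq:rel4}) on $M_J(t)$, and then to establish irreducibility by a connectedness argument on the calibration graph. The new input beyond the $1$-parameter reduced setting is a finite family of rank-$2$ computations arising from the asymmetric $C_2$ and $G_2$ specialisations and from the non-reduced $BC_2$ subsystem.

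First I would confirm well-definedness. The condition $R(w^{-1})\cap D(t)=\emptyset$ built into $F_J(t)$ guarantees $d_i(wt)\neq 0$ for each $w\in F_J(t)$ and each simple $\alpha_i$, so no denominator vanishes in (\ref{eq:Taction}). The convention $e_v=0$ for $v\notin F_J(t)$ is consistent: using (\ref{eq:inversionset}), whenever $w\in F_J(t)$ but $s_iw\notin F_J(t)$ one has $\alpha_i\in N(wt)$ and hence $n_i(wt)=c_i(wt)=0$, so no basis element escapes $F_J(t)$ under $\tilde{T}_i$. Relation (\ref{eq:rel3}) is then immediate from (\ref{eq:diagonal}), and relations (\ref{eq:rel1}) and (\ref{eq:rel4}) reduce on each $e_w$ to direct algebraic identities among $a_i$, $b_i$, $c_i$, $d_i$, $n_i$ that hold uniformly in the reduced and $BC_n$ cases.

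The braid relations (\ref{eq:rel2}) are the heart of the proof: for each pair $(i,j)$ they reduce to a rank-$2$ computation on the $W_{ij}$-orbit of $wt$ intersected with $F_J(t)$. The $(i,j)$-calibratable hypothesis enumerates precisely the eligible weights: the generic case~(1) yields a full orbit and reproduces Ram's polynomial-identity argument, while cases (2), (3), (4) enumerate the singular orbits in each rank-$2$ subsystem. For each, I would identify which orbit elements lie in $F_J(t)$ and check the length-$m_{ij}$ braid composition directly on the resulting truncated orbit. Cases (2)(a) and (3)(a) recover \cite{ramcalib}; the new work lies in the asymmetric cases (2)(b), (3)(b) and in the four $BC_2$ sub-cases (4)(a)--(d).

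For irreducibility, suppose the weights $\{wt:w\in F_J(t)\}$ are distinct. Then each $e_w$ spans a $1$-dimensional $\tilde{x}^\lambda$-weight space, and any $\scH_L$-submodule $N$ is the span of a subset of the basis. If $e_w\in N$ and $s_iw\in F_J(t)$, then $(wt,s_iwt)$ is an edge of $\Gamma(t)$, so $c_i(wt)\neq 0$ and applying $\tilde{T}_i$ followed by subtracting the diagonal contribution yields $e_{s_iw}\in N$. By (\ref{eq:concal}) and Remark~\ref{rem:calib}, $\{wt:w\in F_J(t)\}$ is precisely a connected component of $\Gamma(t)$, so $N=M_J(t)$. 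When $L=P$, Kato's theorem (Theorem~\ref{thm:kato}) makes the stabiliser condition automatic, and the constraint $R(w^{-1})\cap D(t)=\emptyset$ forces distinct elements of $F_J(t)$ to yield distinct weights. The main obstacle will be the braid-relation check in the new rank-$2$ cases, particularly the $BC_2$ sub-cases (4)(a)--(d), where the calibration-graph orbits can be strictly smaller than $|W_{ij}|=8$ and the braid compositions must be computed by hand for each sub-case without access to a uniform polynomial-identity argument.
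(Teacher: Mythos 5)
Your overall strategy coincides with the paper's: reduce to verifying the Bernstein-presentation relations (\ref{eq:rel1})--(\ref{eq:rel4}), with the braid relations handled by a rank-$2$ orbit analysis, and establish irreducibility via the one-dimensionality of the generalised weight spaces together with connectivity of the calibration graph (which is exactly Proposition~\ref{prop:calib} plus Remark~\ref{rem:calib}).

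Two points are worth flagging. First, you identify the $BC_2$ sub-cases (4)(a)--(d) as ``the main obstacle'' and anticipate hand-checking braid compositions on nontrivial truncated orbits there. In fact, in every singular $C_2$ case and in every singular $BC_2$ case the set $F_J(t)$ meets the $W_{ij}$-orbit in a single element, so the module is one-dimensional over $W_{ij}$ and the braid relation holds vacuously. The only genuine hand-computation beyond Ram's polynomial-identity argument for the regular case is the asymmetric $G_2$ case (3)(b), where $F_J(t)$ meets the orbit in two elements and one must write down the $2\times 2$ matrices for $\tilde T_i$ and $\tilde T_j$ and verify the length-$6$ braid word directly. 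Your route is correct, but you have misjudged where the labour lies. Second, your well-definedness remark is stated too strongly: $\alpha_i\in N(wt)$ gives only $n_i(wt)\,n_i(s_iwt)=0$, so it may be $c_i(s_iwt)$ rather than $c_i(wt)$ that vanishes. What the argument actually needs when $s_iw\notin F_J(t)$ is the weaker consequence $c_i(wt)c_i(s_iwt)=0$ (equivalently, $b_i(wt)\in\{1,-q_i^{-1}\}$), which is enough to make the quadratic relation close on the line $\CC e_w$; the convention $e_v=0$ for $v\notin F_J(t)$ makes the operators unconditionally well-defined, and the burden is the relations, not the convention.
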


\begin{proof} (a) We check that the operators $\tilde{T}_i$ and $\tilde{x}^{\lambda}$ satisfy the Bernstein relation. We have
 \begin{align*}
 (\tilde{T}_i\tilde{x}^{\lambda}-\tilde{x}^{s_i\lambda}\tilde{T}_i) e_{wt}&=((wt)^{\lambda}-\tilde{x}^{s_i\lambda})\tilde{T}_i e_{wt}=((wt)^{\lambda}-x^{s_i\lambda})\big(q_i^{\frac{1}{2}}c_i'(wt)e_{wt}+q_i^{\frac{1}{2}}c_i(wt)e_{s_iwt}\big)\\
 &=((wt)^{\lambda}-(wt)^{s_i\lambda})q_i^{\frac{1}{2}}c_i'(wt)e_{wt}=q_i^{\frac{1}{2}}c_i'(\tilde{x})(\tilde{x}^{\lambda}-\tilde{x}^{s_i\lambda})e_{wt}.
 \end{align*}

(b) We now check that the operators $\tilde{T}_i$ satisfy the quadratic relation $\tilde{T}_i^2=1+(q_i^{\frac{1}{2}}-q_i^{-\frac{1}{2}})\tilde{T}_i$.
\begin{align*}
\tilde{T}_i^2 e_{wt}&=\tilde{T}_i \big(q_i^{\frac{1}{2}}c_i'(wt)e_{wt}+q_i^{\frac{1}{2}}c_i(wt)e_{s_iwt}\big)\\
&=q_i\left(c_i'(wt)^2+c_i(wt)c_i(s_iwt)\right)e_{wt}+q_ic_i(wt)\left(c_i'(wt)+c_i'(s_iwt)\right)e_{s_iwt}\\
&=\big(1+(q_i^{\frac{1}{2}}-q_i^{-\frac{1}{2}})q_i^{\frac{1}{2}}c_i'(wt)\big)e_{wt}+(q_i^{\frac{1}{2}}-q_i^{-\frac{1}{2}})q_i^{\frac{1}{2}}c_i(wt)e_{s_iwt}=(1+(q_i^{\frac{1}{2}}-q_i^{-\frac{1}{2}})\tilde{T}_i) e_{wt}.
\end{align*}

(c) We verify the braid relation $\cdots \tilde{T}_i\tilde{T}_j\tilde{T}_i=\cdots \tilde{T}_j\tilde{T}_i\tilde{T}_j$ ($m_{ij}$ factors). Fix $w\in F_J(t)$. Suppose first that $wt$ is $(i,j)$-regular. Let $v\in W_{ij}$. If $e_{vwt}\neq 0$ then (\ref{eq:Taction}) gives
\begin{align}\label{eq:ind}
\big(\tilde{T}_i-q_i^{\frac{1}{2}}c_i'(vwt)\big) e_{vwt}=q_i^{\frac{1}{2}}c_i(vwt)e_{s_ivwt},
\end{align}
and by Remark~\ref{rem:calib} this formula is also true when $e_{vwt}=0$ and $\ell(s_iv)>\ell(v)$. 

Consider the product (well defined by $(i,j)$-regularity)
\begin{align*}
A_{ij}(wt)&=\cdots(\tilde{T}_i-q_i^{\frac{1}{2}}c_i'(s_js_iwt))(\tilde{T}_j-q_j^{\frac{1}{2}}c_j'(s_iwt))(\tilde{T}_i-q_i^{\frac{1}{2}}c_i'(wt))&&\textrm{($m_{ij}$ factors)}.
\end{align*}
Let $v_0$ be the longest element of $W_{ij}$. Repeatedly using (\ref{eq:ind}) and $c_{\alpha}(vwt)=c_{v^{-1}\alpha}(wt)$ gives
\begin{align*}
A_{ij}(wt) e_{wt}&=q_{v_0}^{\frac{1}{2}}\big[c_{\alpha_i}(wt)c_{s_i\alpha_j}(wt)c_{s_is_j\alpha_i}(wt)c_{s_is_js_i\alpha_j}(wt)\cdots \big]e_{v_0wt}.
\end{align*}
By (\ref{eq:inversionset}) we have $\{\alpha_i,s_i\alpha_j,s_is_j\alpha_i,\ldots\}=\{\alpha_j,s_j\alpha_i,s_js_i\alpha_j,\ldots\}$ and so
$
A_{ji}(wt) e_{wt}=A_{ij}(wt) e_{wt}.
$
Each $v\in W_{ij}\backslash\{v_0\}$ has a unique expression as a product of simple generators, and so for $v<v_0$ we may unambiguously define operators $\tilde{T}_v=\tilde{T}_{i_1}\tilde{T}_{i_2}\cdots\tilde{T}_{i_{\ell}}$ where $v=s_{i_1}s_{i_2}\cdots s_{i_{\ell}}$ is the unique reduced expression for $v\in W_{ij}$. Expanding $A_{ij}(wt)$ and $A_{ji}(wt)$ and using the already verified quadratic relation for $\tilde{T}_i$ and $\tilde{T}_j$ we see that there are rational functions $p_v(wt)$ and $q_v(wt)$ in $wt$ such that
\begin{align*}
A_{ij}(wt)e_{wt}&=\cdots \tilde{T}_i\tilde{T}_j\tilde{T}_i e_{wt}+\sum_{v<v_0}p_v(wt)\tilde{T}_v e_{wt}\\
A_{ji}(wt)e_{wt}&=\cdots \tilde{T}_j\tilde{T}_i\tilde{T}_j e_{wt}+\sum_{v<v_0}q_v(wt)\tilde{T}_v e_{wt},
\end{align*}
One now shows that $p_v(wt)=q_v(wt)$ for all $v<v_0$. This is achieved exactly as in \cite[Proposition~2.7]{ramcalib} by using the action of the $\tau$-operators on principal series representations, and we omit the details. Thus the braid relation, in the $(i,j)$-regular case, holds.

We now verify the braid relation in the case where $wt$ is $(i,j)$-calibratable but not $(i,j)$-regular. Consider the $R_{ij}=C_2$ case with $q_i=q_j$ and $(wt)^{\alpha_i^{\vee}}=q_i$ and $(wt)^{\alpha_j^{\vee}}=q_i^{-1}$. By (\ref{eq:concal}) we have $F_J(t)=\{wt\}$, and so the braid relation is trivially satisfied (as $M_J(t)$ is $1$-dimensional). All other $C_2$ cases are similar.
In the $G_2$ case with $q_i=q_j^3$ and $(wt)^{\alpha_i^{\vee}}=q_j^3$ and $(wt)^{\alpha_j^{\vee}}=q_j^{-2}$, by (\ref{eq:concal}) we compute $F_J(t)=\{w,s_jw\}$, and a direct calculation gives 
\begin{align*}
\tilde{T}_ie_{wt}&=q_i^{\frac{1}{2}}e_{wt}&\tilde{T}_je_{wt}&=\frac{1}{q_j+1}\left(-q_j^{-\frac{1}{2}}e_{wt}+q_j^{\frac{1}{2}}e_{s_jwt}\right)\\
\tilde{T}_ie_{s_jwt}&=-q_i^{-\frac{1}{2}}e_{s_jwt}&\tilde{T}_je_{s_jwt}&=q_j^{\frac{1}{2}}\left(\frac{1-q_j^{-3}}{1-q_j^{-2}}e_{wt}+\frac{1}{1+q_j^{-1}}e_{s_jwt}\right)
\end{align*}
The braid relation follows by direct calculation. The remaining $G_2$ cases are similar (or trivial). Finally, in all $BC_2$ cases we have $F_J(t)=\{wt\}$ and so the braid relation is trivially satisfied. 

To conclude the proof we show that the module $M_J(t)$ is irreducible and calibrated. By the construction the generalised weight spaces of $M_J(t)$ are $M_J(t)_{wt}$, with $w\in F_J(t)$, and each generalised weight space has dimension~$1$. So $M_J(t)$ is calibrated. Furthermore, it follows that if $M$ is a proper submodule of $M_J(t)$ then there is $w,w'\in F_J(t)$ with $wt\neq w't$ such that $M_{wt}\neq 0$ and $M_{w't}=0$, contradicting Proposition~\ref{prop:calib}. Thus $M_J(t)$ is irreducible.
\end{proof}

We note the following subtle point: In Theorem~\ref{thm:repmain} the basis of $M_J(t)$ is indexed by the set $\{wt\mid w\in F_J(t)\}$, while in the construction \cite[Theorem~3.5]{ram5} the basis is indexed by~$F_J(t)$. The reason for this refinement is that we work with general lattices $Q\subseteq L\subseteq P$, while in \cite{ramcalib,ram5} the lattice $L=P$ is specified. See Examples~\ref{ex:1} and~\ref{ex:2} below.

\begin{remark} Recently \cite{davis} Ram's construction has been applied to study the representation theory of $1$-parameter rank $2$ affine Hecke algebras with $q$ a root of the Poincar\'{e} polynomial, and analogously the above construction could be applied to the study of such representations in the multi-parameter case.
\end{remark}

\subsection{Examples}\label{sect:examples}

Let us give some concrete examples of the construction from Theorem~\ref{thm:repmain}. Most of these examples will arise in the Plancherel Theorems in the later parts of this paper (see Section~\ref{sect:plancherel}). Of interest, we see in the third and fourth examples that some non-calibrated modules (in the single parameter case) can be constructed from calibrated modules (of multi-parameter algebras) by making an appropriate change of basis and taking a limit.

\begin{Example}\label{ex:1}
Let $\scH$ be a $\tilde{C}_2$ Hecke algebra with $L=P$ and with parameters $q_1$ and~$q_2$ (see Section~\ref{sect:C2P}). Let~$t\in\Hom(P,\CC^{\times})$ be the character with $t^{\omega_1}=-q_1^{-1}$ and $t^{\omega_2}=q_1^{-1/2}$, so that $t^{\alpha_1^{\vee}}=q_1^{-1}$ and $t^{\alpha_2^{\vee}}=-1$. Thus $N(t)^{\vee}=\{\alpha_1^{\vee},\alpha_1^{\vee}+2\alpha_2^{\vee}\}$ and $D(t)=\emptyset$. Thus there are~$4$ choices for subsets $J\subseteq N(t)$. Let $J_1^{\vee}=\emptyset$, $J_2^{\vee}=\{\alpha_1^{\vee}\}$, $J_3^{\vee}=\{\alpha_1^{\vee}+2\alpha_2^{\vee}\}$, and $J_4^{\vee}=\{\alpha_1^{\vee},\alpha_1^{\vee}+2\alpha_2^{\vee}\}$. We compute $F_{J_1}(t)=\{1,s_2\}$, $F_{J_2}(t)=\{s_1,s_2s_1\}$, $F_{J_3}(t)=\{s_1s_2,s_2s_1s_2\}$, and $F_{J_4}(t)=\{s_1s_2s_1,s_1s_2s_1s_2\}$, and so the calibration graph is as in Figure~\ref{fig:calibex1}(a). Thus by Theorem~\ref{thm:repmain} and Proposition~\ref{prop:quotient} there are $4$ irreducible modules with central character~$t$, each with dimension~$2$. For example, the matrices for the module $M_{J_1}(t)$ with respect to the basis $\{e_t,e_{s_2t}\}$ are $\pi(T_1)=-q_1^{-1/2}I$, $\pi(x^{\omega_1})=-q_1^{-1}I$, and
\begin{align*}
\pi(T_2)&=\frac{q_2^{1/2}}{2}\begin{pmatrix}1-q_2^{-1}&1+q_2^{-1}\\
1+q_2^{-1}&1-q_2^{-1}\end{pmatrix}&\pi(x^{\omega_2})&=\mathrm{diag}(q_1^{-1/2},-q_1^{-1/2}).
\end{align*}
Note that $\pi(x^{\alpha_1^{\vee}})=q_1^{-1}I$ and $\pi(x^{\alpha_2^{\vee}})=-I$, and it follows that the restriction $\pi|_{\scH_Q}$ is not irreducible (indeed $\pi|_{\scH_Q}$ is the direct sum of the representations $\pi^4$ and $\pi^5$ from Section~\ref{sect:C2Q}). This does not contradict the irreducibility statement of Theorem~\ref{thm:repmain}, because the calibration graph changes if we use the lattice $Q$ instead of~$P$ (see Example~\ref{ex:2}).
\end{Example}

\begin{Example}\label{ex:2}
Now let $\scH$ be a $\tilde{C}_2$ Hecke algebra with $L=Q$. Let $t\in\Hom(Q,\CC^{\times})$ be the character with $t^{\alpha_1^{\vee}}=q_1^{-1}$ and $t^{\alpha_2^{\vee}}=-1$ (note the similarity to Example~\ref{ex:1}). Then $N(t)$ and $D(t)$ are as in Example~\ref{ex:1}. Let $J_1,J_2,J_3,J_4$ be as in Example~\ref{ex:1}, and then the sets $F_{J_i}(t)$ are as computed in Example~\ref{ex:1}. However now $s_2t=t$, and so the calibration graph is as shown in Figure~\ref{fig:calibex1}(b). Thus Theorem~\ref{thm:repmain} constructs $2$ irreducible $1$-dimensional modules, and $1$ irreducible $2$-dimensional module with central character~$t$. 
\end{Example}

\begin{figure}[!h]
\centering
\subfigure[Example~\ref{ex:1}: $L=P$]{
\begin{tikzpicture}[scale=0.9]
\node at (1,2) {$\bullet$};
\node at (2,1) {$\bullet$};
\node at (2,-1) {$\bullet$};
\node at (1,-2) {$\bullet$};
\node at (-1,2) {$\bullet$};
\node at (-2,1) {$\bullet$};
\node at (-2,-1) {$\bullet$};
\node at (-1,-2) {$\bullet$};
\draw (-1,2)--(1,2);
\draw (-2,-1)--(-2,1);
\draw (2,-1)--(2,1);
\draw (-1,-2)--(1,-2);
\node at (1.4,2) {$t$};
\node at (2.5,1) {$s_1t$};
\node at (2.7,-1) {$s_2s_1t$};
\node at (1.85,-2) {$s_1s_2s_1t$};
\node at (-1.5,2) {$s_2t$};
\node at (-2.7,1) {$s_1s_2t$};
\node at (-2.9,-1) {$s_2s_1s_2t$};
\node at (-2.1,-2) {$s_1s_2s_1s_2t$};
\end{tikzpicture}
}\hspace{2cm}
\subfigure[Example~\ref{ex:2}: $L=Q$]{
\begin{tikzpicture}[scale=0.9]
\node at (1,2) {$\bullet$};
\node at (2,1) {$\bullet$};
\node at (2,-1) {$\bullet$};
\node at (1,-2) {$\bullet$};
\draw (2,1)--(2,-1);
\node at (0,2) {$t=s_2t$};
\node at (0.7,1) {$s_1t=s_1s_2t$};
\node at (0.4,-1) {$s_2s_1t=s_2s_1s_2t$};
\node at (-1,-2) {$s_1s_2s_1t=s_1s_2s_2s_1t$};
\phantom{\node at (-1,2) {$\bullet$};
\node at (-2,1) {$\bullet$};
\node at (-2,-1) {$\bullet$};
\node at (-1,-2) {$\bullet$};}
\end{tikzpicture}}
\caption{Calibration graphs for Examples~\ref{ex:1} and~\ref{ex:2}}\label{fig:calibex1}
\end{figure}
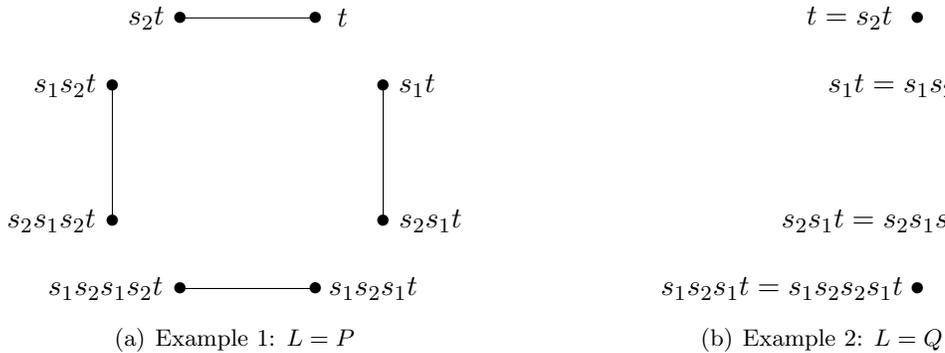

\begin{Example} Let $\scH$ be a $\tilde{G}_2$ Hecke algebra with $L=Q=P$ and with parameters $q_1$ and $q_2$ (see Section~\ref{sect:G2}). Let $t\in\Hom(Q,\CC^{\times})$ be the character with $t^{\alpha_1^{\vee}}=q_1$ and $t^{\alpha_2^{\vee}}=q_1^{-1/2}q_2^{1/2}$. If $q_1\neq q_2$ and $q_1\neq q_2^3$ then this character is regular, and we compute $N(t)^{\vee}=\{\alpha_1^{\vee},\alpha_1^{\vee}+2\alpha_2^{\vee}\}$. Thus there are $4$ choices for $J\subseteq N(t)$, and the connected components of the calibration graph are given by $\{wt\mid w\in F_J(t)\}$ for these choices of~$J$. Consider the case $J^{\vee}=\{\alpha_1^{\vee}+2\alpha_2^{\vee}\}$. We compute $F_J(t)=\{s_2s_1s_2s_1,s_1s_2s_1s_2s_1,s_2s_1s_2s_1s_2s_1\}$. The matrices for $\pi=M_J(t)$ are \begin{align*}
\pi(T_1)&=q_1^{\frac{1}{2}}\begin{pmatrix}
\frac{1-q_1^{-1}}{1- q_1^{\frac{1}{2}}q_2^{-\frac{3}{2}}}&
\frac{1- q_1^{-\frac{3}{2}}q_2^{\frac{3}{2}}}{1- q_1^{-\frac{1}{2}}q_2^{\frac{3}{2}}}&0\\
\frac{1- q_1^{-\frac{1}{2}}q_2^{-\frac{3}{2}}}{1- q_1^{\frac{1}{2}}q_2^{-\frac{3}{2}}}&
\frac{1-q_1^{-1}}{1- q_1^{-\frac{1}{2}}q_2^{\frac{3}{2}}}&0\\
0&0&-q_1^{-1}
\end{pmatrix}&
\pi(T_2)&=q_2^{\frac{1}{2}}\begin{pmatrix}
-q_2^{-1}&0&0\\
0&\frac{1-q_2^{-1}}{1- q_1^{\frac{1}{2}}q_2^{-\frac{1}{2}}}&
\frac{1- q_1^{\frac{1}{2}}q_2^{-\frac{3}{2}}}{1- q_1^{-\frac{1}{2}}q_2^{\frac{1}{2}}}\\
0&\frac{1- q_1^{-\frac{1}{2}}q_2^{-\frac{1}{2}}}{1- q_1^{\frac{1}{2}}q_2^{-\frac{1}{2}}}&\frac{1-q_2^{-1}}{1- q_1^{-\frac{1}{2}}q_2^{\frac{1}{2}}}
\end{pmatrix}\\
\pi(x^{\alpha_1^{\vee}})&=\mathrm{diag}( q_1^{-\frac{1}{2}}q_2^{\frac{3}{2}}, q_1^{\frac{1}{2}}q_2^{-\frac{3}{2}},q_1^{-1})&\pi(x^{\alpha_2^{\vee}})&=\mathrm{diag}(q_2^{-1}, q_1^{-\frac{1}{2}}q_2^{\frac{1}{2}}, q_1^{\frac{1}{2}}q_2^{-\frac{1}{2}})
\end{align*}
The construction breaks down when $q_1=q_2$ or when $q_1=q_2^3$. These cases can be dealt with by a suitable change of basis in the module $M_J(t)$. Let
\begin{align*}
A&=\begin{pmatrix}
1&0&0\\
0&1&-q_1^{\frac{1}{2}}q_2^{-\frac{1}{2}}\\
0&-1&1\end{pmatrix}&B&=\begin{pmatrix}1&-q_1^{\frac{1}{2}}q_2^{-\frac{3}{2}}&0\\
-1&1&0\\
0&0&1-q_1^{\frac{1}{2}}q_2^{-\frac{3}{2}}
\end{pmatrix}.
\end{align*}
After conjugating each representing matrix by $A$ (respectively $B$) it is observed that the resulting matrices are defined at $q_1=q_2$ (respectively $q_1=q_2^3$). Setting $q_1=q_2=q$ (respectively $q_1=q^3$ with $q_2=q$) gives a (non-calibrated) irreducible representation of the algebra $\scH(q,q)$ (respectively the algebra $\scH(q,q^3)$). For example, the matrices in the $q_1=q_2=q$ case become
\begin{align*}
\pi(T_1)&=q^{\frac{1}{2}}\begin{pmatrix}
1&\frac{3}{q-1}&\frac{3}{q-1}\\
\frac{q+1}{q}&\frac{2q+1}{q(q-1)}&\frac{3}{q-1}\\
-\frac{q+1}{q}&-\frac{3}{q-1}&-\frac{4q-1}{q(q-1)}\end{pmatrix}
&
\pi(T_2)&=q^{\frac{1}{2}}\begin{pmatrix}
-q^{-1}&0&0\\
0&1&0\\
0&-2q^{-1}&-q^{-1}\end{pmatrix}\\
\pi(x^{\alpha_1^{\vee}})&=\begin{pmatrix}
q&0&0\\
0&-2q^{-1}&-3q^{-1}\\
0&3q^{-1}&4q^{-1}\end{pmatrix}&
\pi(x^{\alpha_2^{\vee}})&=\begin{pmatrix}
q^{-1}&0&0\\
0&3&2\\
0&-2&-1\end{pmatrix}.
\end{align*}
\end{Example}

\begin{Example} Let $\scH$ be a $\tilde{C}_2$ affine Hecke algebra with either $L=Q$ or $L=P$ and with parameters $q_1$ and~$q_2$ (see Sections~\ref{sect:C2Q} and \ref{sect:C2P}). Let $t\in\Hom(L,\CC^{\times})$ be a character with $t^{\alpha_1^{\vee}}=q_1^{-1}$ and $t^{\alpha_2^{\vee}}=q_2$. If $q_1\neq q_2$ and $q_1\neq q_2^2$ then the character $t$ is regular, since $t^{\alpha_1^{\vee}+\alpha_2^{\vee}}=q_1^{-1}q_2$ and $t^{\alpha_1^{\vee}+2\alpha_2^{\vee}}=q_1^{-1}q_2^2$. 
Thus we compute $N(t)=\{\alpha_1,\alpha_2\}$ and $D(t)=\emptyset$. There are $4$ choices for $J\subseteq N(t)$, namely $J_1=\emptyset$, $J_2=\{\alpha_1\}$, $J_3=\{\alpha_2\}$, and $J_4=\{\alpha_1,\alpha_2\}$. We compute
\begin{align*}
F_{J_1}(t)&=\{1\},&F_{J_2}(t)&=\{s_1,s_2s_1,s_1s_2s_1\},&F_{J_3}(t)&=\{s_2,s_1s_2,s_2s_1s_2\},&F_{J_4}(t)&=\{s_1s_2s_1s_2\}.
\end{align*}
The sets $\{wt\mid w\in F_{J_i}(t)\}$ with $i=1,2,3,4$ are the connected components of the calibration graph of~$t$. Thus there are $4$ irreducible modules $M_{J_i}(t)$ ($i=1,2,3,4$) with central character~$t$, with dimensions $1,3,3,1$ respectively.

Consider the module $M_{J_3}(t)$ (this module will appear in the Plancherel Theorem for $\tilde{C}_2$). The matrices of $T_1,T_2,x^{\alpha_1^{\vee}}$ and $x^{\alpha_2^{\vee}}$ relative to the basis $e_{s_2t},e_{s_1s_2t},e_{s_2s_1s_2t}$ are
\begin{align*}
\pi(T_1)&=q_1^{\frac{1}{2}}\begin{pmatrix}
\frac{1-q_1^{-1}}{1-q_1q_2^{-2}}&
\frac{1-q_1^{-2}q_2^2}{1-q_1^{-1}q_2^2}&
0\\
\frac{1-q_2^{-2}}{1-q_1q_2^{-2}}&
\frac{1-q_1^{-1}}{1-q_1^{-1}q_2^2}&
0\\
0&0&-q_1^{-1}
\end{pmatrix}&
\pi(T_2)&=q_2^{\frac{1}{2}}
\begin{pmatrix}
-q_2^{-1}&0&0\\
0&
\frac{1-q_2^{-1}}{1-q_1q_2^{-1}}&
\frac{1-q_1^{-1}}{1-q_1^{-1}q_2}\\
0&
\frac{1-q_1q_2^{-2}}{1-q_1q_2^{-1}}&
\frac{1-q_2^{-1}}{1-q_1^{-1}q_2}
\end{pmatrix}\\
\pi(x^{\alpha_1^{\vee}})&=\mathrm{diag}(q_1^{-1}q_2^2,q_1q_2^{-2},q_1^{-1})&\pi(x^{\alpha_2^{\vee}})&=\mathrm{diag}(q_2^{-1},q_1^{-1}q_2,q_1q_2^{-1}).
\end{align*}
If $L=P$ then $\omega_1=\alpha_1^{\vee}+\alpha_2^{\vee}$ and $\omega_2=\alpha_1^{\vee}/2+\alpha_2^{\vee}$. Thus there are $2$ characters $t\in\Hom(P,\CC^{\times})$ with $t^{\alpha_1^{\vee}}=q_1^{-1}$ and $t^{\alpha_2^{\vee}}=q_2$, specifically $t^{\omega_1}=q_1^{-1}q_2$ and $t^{\omega_2}=\pm q_1^{-1/2}q_2$. The corresponding matrices for $x^{\omega_1}$ and $x^{\omega_2}$ are
\begin{align*}
\pi(x^{\omega_1})&=\mathrm{diag}(q_1^{-1}q_2,q_2^{-1},q_2^{-1})&\pi(x^{\omega_2})&=\pm\mathrm{diag}(q_1^{-1/2},q_1^{-1/2},q_1^{1/2}q_2^{-1}).
\end{align*}
In the cases $q_1=q_2$ or $q_1=q_2^2$ an analogous computation to that in Example 2 can be used to construct (non-calibrated) irreducible representations of $\scH(q,q)$ and $\scH(q,q^2)$.
\end{Example}

\subsection{Characters}\label{sect:characters}

We conclude this section with some observations about characters that will be used for the Plancherel Theorems. Let $f_t(h)$ be as in~(\ref{eq:f}).

\begin{lemma}\label{lem:milage} Let $\pi$ be an irreducible representation of $\scH_L$ with central character $t$, and suppose that the character $\chi$ of $\pi$ satisfies 
$$
\chi(\tau_wx^{\lambda})=\delta_{w,1}\sum_{v\in W_0}k_v(vt)^{\lambda}\qquad\textrm{for all $w\in W_0$ and $\lambda\in L$},
$$
for some numbers $k_v\in\CC$. Then, with $f_t$ as in~(\ref{eq:f}), if $t$ is regular we have
$$
\chi(h)=\sum_{v\in W_0}k_vf_{vt}(h)\qquad\textrm{for all $h\in\scH_L$}.
$$
\end{lemma}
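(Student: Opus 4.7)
The plan is to verify equality of the two linear functionals $\chi$ and $\Psi := \sum_{v\in W_0} k_v f_{vt}$ on the $\CC$-basis $\{T_w x^\lambda : w\in W_0,\ \lambda\in L\}$ of $\scH_L$, by first checking equality on the modified basis elements $\tau_w x^\lambda$ and then transferring the conclusion to the $T_w x^\lambda$ by induction on $\ell(w)$.

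Three preliminary observations set things up. Firstly, both $\chi$ (by the trace property) and each $f_{vt}$ (by part~3 of Theorem~\ref{thm:prelim_main}), and hence $\Psi$, satisfy the identity
$$F(x^\mu h x^\lambda) = F(h\, x^{\mu+\lambda})\qquad (h\in\scH_L,\ \mu,\lambda\in L).$$
Secondly, combining Remark~\ref{rem:explicit}(a) with the hypothesis gives $\Psi(\tau_w x^\lambda) = \delta_{w,1}\sum_v k_v(vt)^\lambda = \chi(\tau_w x^\lambda)$ for all $w\in W_0$ and $\lambda\in L$. Thirdly, both $\chi(T_w x^\lambda)$ and $\Psi(T_w x^\lambda)$ take the form $\sum_{v\in W_0} a_v(w)(vt)^\lambda$ with coefficients independent of $\lambda$: for $\Psi$ this is immediate from $f_{vt}(T_w x^\lambda) = (vt)^\lambda f_{vt}(T_w)$, and for $\chi$ it follows because regularity of $t$ (together with $\pi$ being a quotient of $M(t)$ via Proposition~\ref{prop:quotient}) forces $\pi$ to decompose as a direct sum of one-dimensional genuine weight spaces with weights drawn from $\{vt : v\in W_0\}$.

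For the induction on $\ell(w)$ I use the triangular expansion
$$\tau_w = c_w(x)\,T_w + \sum_{v<w} p_{v,w}(x)\,T_v, \qquad c_w(x) = \prod_{\alpha\in R(w)}\bigl(1 - x^{-\alpha^\vee}\bigr),$$
with $p_{v,w}(x)\in\CC[L]$, which follows from $\tau_i = (1-x^{-\alpha_i^\vee})T_i - a_i(x)$ together with the Bernstein relation~(\ref{eq:rel4}). Set $\phi := \chi - \Psi$. The base case $w=1$ is $\phi(x^\lambda)=\phi(\tau_1 x^\lambda)=0$ by the second observation. For the inductive step, suppose $\phi(T_v x^\mu)=0$ for all $v<w$ and $\mu\in L$. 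Applying $\phi$ to $\tau_w x^\lambda$, using the second observation on the left, the inductive hypothesis to kill the contribution of $v<w$, and the first observation to move each $x$-monomial in $c_w(x)$ across $T_w$, one obtains
$$\sum_\mu c_{w,\mu}\,\phi(T_w x^{\mu+\lambda}) = 0 \qquad \text{for all } \lambda\in L,$$
where $c_w(x) = \sum_\mu c_{w,\mu} x^\mu$.

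To close the induction, the third observation writes $\phi(T_w x^\lambda) = \sum_v e_v (vt)^\lambda$ for some constants $e_v\in\CC$. Substituting into the displayed equation and invoking linear independence of the characters $\{vt:v\in W_0\}$ of $L$ (which are distinct precisely because $t$ is regular) forces $e_v\,c_w(vt) = 0$ for each $v$. The crux is that $c_w(vt) = \prod_{\alpha\in R(w)}\bigl(1 - t^{-v^{-1}\alpha^\vee}\bigr)$ is nonzero for regular $t$, since each factor corresponds to a coroot $v^{-1}\alpha^\vee$ with $\alpha\in R_0^+$ at which $t$ is not fixed. Hence $e_v = 0$ for all $v$, so $\phi(T_w x^\lambda) = 0$ and the induction is complete. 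The main obstacle is precisely this final non-vanishing step, where the regularity hypothesis on $t$ is essential.
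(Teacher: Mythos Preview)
Your proof is correct, but it takes a genuinely different route from the paper's. The paper argues as follows: both sides agree on the span $\scH_L'$ of $\{\tau_w x^\lambda\}$; the element $\Delta(x)=\prod_{\alpha\in R_0}(1-x^{-\alpha^\vee})$ lies in the centre $\CC[L]^{W_0}$, and an easy induction shows $\Delta(x)^{\ell(w)}T_w\in\scH_L'$; since $\pi$ is irreducible, $\Delta(x)$ acts by the scalar $\Delta(t)$, and since $\Delta$ is $W_0$-invariant each $f_{vt}$ picks up the same factor $\Delta(vt)=\Delta(t)$, so $\Delta(t)^{\ell(w)}\chi(T_wx^\lambda)=\Delta(t)^{\ell(w)}\Psi(T_wx^\lambda)$, and regularity lets one divide. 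This uses irreducibility only through Schur's lemma for a central element, and never needs to analyse the weight-space structure of~$\pi$. Your approach instead inverts the triangular relation between the $\tau_w$'s and the $T_w$'s by induction on length, and exploits the fact that a quotient of $M(t)$ for regular $t$ has $\CC[L]$ acting semisimply, so that $\phi(T_wx^\lambda)$ is an exponential polynomial in~$\lambda$. The paper's argument is shorter; yours makes the role of the weight structure more explicit.

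One small correction: your parenthetical ``which are distinct precisely because $t$ is regular'' is false in general. Regularity only says $t^{\alpha^\vee}\neq 1$ for all $\alpha\in R_0$; for instance in type $A_1$ with $L=Q$ the character with $t^{\alpha_1^\vee}=-1$ is regular but fixed by $s_1$. This does not damage your argument: simply group the $e_v$'s according to the distinct values $u$ among $\{vt\}$, obtain $\bigl(\sum_{v:vt=u}e_v\bigr)c_w(u)=0$ from linear independence of the \emph{distinct} characters, and conclude $\phi(T_wx^\lambda)=\sum_u\bigl(\sum_{v:vt=u}e_v\bigr)u^\lambda=0$ as before. Likewise, the phrase ``one-dimensional'' in your third observation is not needed (and not always true for the same reason); all you use is that $\CC[L]$ acts diagonalisably on~$\pi$ with eigenvalues among the $(vt)^\lambda$, which does follow from $\pi$ being a quotient of $M(t)$ with $D(t)=\emptyset$.
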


\begin{proof} Since $t$ is regular each $f_{vt}(h)$ with $v\in W_0$ and $h\in\scH_L$ is defined. 
From Remark~\ref{rem:explicit} and the hypothesis we have $\chi(h)=\sum_{v\in W_0}k_vf_{vt}(h)$ for all $h\in\scH_L'$, where~$\scH_L'$ is the subalgebra of $\scH_L$ with basis $\{\tau_w x^{\lambda}\mid w\in W_0,\lambda\in L\}$. Let $\Delta(x)=\prod_{\alpha\in R_0}(1-x^{-\alpha^{\vee}})=d(x)d(x^{-1})$. An induction using the formula $(1-x^{-\alpha_i^{\vee}})T_i=\tau_i+a_i(x)$ shows that
$
\Delta(x)^{\ell(w)}T_w\in\scH_L'$ for all $w\in W_0$. Thus $
\Delta(x)^{\ell(w)}T_wx^{\lambda}\in\scH_L'$ for all $w\in W_0$ and $\lambda\in L$.
Since $\Delta(x)\in\CC[L]^{W_0}$ is central and $\chi$ is irreducible we have 
$$
\Delta(t)^{\ell(w)}\chi(T_w x^{\lambda})=\chi(\Delta(x)^{\ell(w)}T_wx^{\lambda})=\sum_{v\in W_0}k_vf_{vt}(\Delta(x)^{\ell(w)}T_wx^{\lambda})=\Delta(t)^{\ell(w)}\sum_{v\in W_0}k_vf_{vt}(T_wx^{\lambda}).
$$
We can divide through by $\Delta(t)^{\ell(w)}$ since $t$ is regular.
\end{proof}

\begin{prop}\label{prop:prelim_main}
Let $\chi_t$ be the character of the principal series representation $M(t)$ of $\scH_L$ with central character~$t$. Then
\begin{align}\label{eq:rg}
\chi_t(h)=\sum_{w\in W_0}f_{wt}(h)\qquad\textrm{for all $h\in\scH_L$}
\end{align}
where the right hand side has an analytic continuation (for fixed $h\in\scH_L$) to all $t\in\Hom(L,\CC^{\times})$.
\end{prop}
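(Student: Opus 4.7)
The plan is to apply Lemma~\ref{lem:milage} with $k_v = 1$ for every $v \in W_0$. For this I must verify, for regular~$t$, that
\[
\chi_t(\tau_u x^\lambda) = \delta_{u,1}\sum_{w \in W_0}(wt)^\lambda \qquad \textrm{for all } u \in W_0 \textrm{ and } \lambda \in L.
\]
Note that although Lemma~\ref{lem:milage} is stated for irreducible $\pi$, its proof uses irreducibility only to conclude that $\Delta(x) \in \CC[L]^{W_0}$ acts by the scalar $\Delta(t)$, and this already holds for $M(t)$ by construction.

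First, I would establish for regular $t$ a weight-space decomposition $M(t) = \bigoplus_{w \in W_0} M(t)_{wt}$ into one-dimensional simultaneous $\CC[L]$-eigenspaces. In the standard basis $\{T_v \otimes v_t : v \in W_0\}$, iterating the Bernstein relation~(\ref{eq:rel4}) shows that $x^\mu(T_v \otimes v_t) = t^{v^{-1}\mu}(T_v \otimes v_t) + (\textrm{lower Bruhat terms})$, so the matrix of $x^\mu$ on $M(t)$ is triangular with diagonal entries $(wt)^\mu$. For regular $t$ one can choose $\mu \in L$ so that the $(wt)^\mu$ are pairwise distinct, making $x^\mu$ diagonalizable; commutativity of the $x^\mu$ then yields a simultaneous weight basis $\{v_w : w \in W_0\}$ with $x^\mu v_w = (wt)^\mu v_w$.

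In this basis I would compute $\chi_t(\tau_u x^\lambda)$ by tracking weights. The intertwining property $\tau_u x^\mu = x^{u\mu}\tau_u$ gives $\tau_u v_w \in M(t)_{uwt}$, since
\[
x^\mu(\tau_u v_w) = \tau_u x^{u^{-1}\mu} v_w = (wt)^{u^{-1}\mu}\tau_u v_w = (uwt)^\mu \tau_u v_w.
\]
Consequently $\tau_u x^\lambda v_w = (wt)^\lambda \tau_u v_w$ lies in $M(t)_{uwt}$. For regular $t$ and $u \neq 1$ the weights $uwt$ and $wt$ are distinct, so the $v_w$-diagonal entry of $\tau_u x^\lambda$ vanishes; for $u = 1$ the diagonal entry is $(wt)^\lambda$. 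Summing over $w$ yields the required identity, and Lemma~\ref{lem:milage} then produces $\chi_t(h) = \sum_{w \in W_0} f_{wt}(h)$ for every $h \in \scH_L$ and every regular $t$.

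For the analytic continuation, observe that the matrix entries of any $h \in \scH_L$ in the standard basis $\{T_v \otimes v_t\}$ are polynomials in $\{t^\mu : \mu \in L\}$, because the apparent denominator $1 - x^{-\alpha_i^\vee}$ in~(\ref{eq:rel4}) always divides the numerator in $\CC[L]$. Hence $\chi_t(h)$ is polynomial, and in particular entire, in $t$. Since it agrees with the \emph{a priori} meromorphic sum $\sum_w f_{wt}(h)$ on the Zariski-dense set of regular $t$, the sum extends uniquely and holomorphically (indeed polynomially) to all of $\Hom(L, \CC^\times)$. The main obstacle is verifying the triangular form of the $x^\mu$ in the standard basis and the resulting weight decomposition at regular $t$; once that is in hand, the rest is bookkeeping.
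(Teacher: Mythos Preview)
Your proof is correct and follows essentially the same strategy as the paper: verify $\chi_t(\tau_u x^\lambda)=\delta_{u,1}\sum_w(wt)^\lambda$ on a Zariski-open set of $t$, invoke Lemma~\ref{lem:milage}, and then extend by analytic continuation using that $\chi_t(h)$ is polynomial in~$t$. Your observation that the irreducibility hypothesis in Lemma~\ref{lem:milage} is unnecessary for $M(t)$ (since the centre already acts by evaluation at~$t$) is correct.

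Two small remarks. First, the paper obtains the weight basis more concretely: it takes $\{\tau_w\otimes v_t\mid w\in W_0\}$ directly, observing that when $D(t)=\emptyset$ the transition to $\{T_w\otimes v_t\}$ is unitriangular (up to the nonzero factor $\prod_{\alpha\in R(w^{-1})}(1-t^{\alpha^\vee})$). This is exactly the weight basis your abstract diagonalisation produces, so the difference is only in presentation. Second, your phrase ``for regular $t$ and $u\neq 1$ the weights $uwt$ and $wt$ are distinct'' is not quite right for general~$L$: regularity ($t^{\alpha^\vee}\neq 1$ for all $\alpha$) need not force the $W_0$-stabiliser of $t$ to be trivial when $L\neq P$ (e.g.\ in type $A_1$ with $L=Q$ take $t^{\alpha^\vee}=-1$). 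The paper sidesteps this by assuming at the outset both $D(t)=\emptyset$ \emph{and} $M(t)$ irreducible, which is still Zariski-open and dense; you should do the same, or simply restrict to $t$ with trivial stabiliser. This does not affect the analytic continuation step.
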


\begin{proof}
Suppose first that $D(t)=\emptyset$ and that $M(t)$ is irreducible (see Theorem~\ref{thm:kato}). Since $D(t)=\emptyset$ the module $M(t)$ has basis $\{\tau_w\otimes v_t\mid w\in W_0\}$. To see this note that if $w=s_{i_1}\cdots s_{i_{\ell}}$ is reduced then the Bernstein relation gives
\begin{align*}
\tau_w\otimes v_t&=\bigg[\prod_{\alpha\in R(w^{-1})}(1-t^{\alpha^{\vee}})\bigg](T_w\otimes v_t)+\textrm{lower terms},
\end{align*}
where `lower terms' is a linear combination of terms $T_v\otimes v_t$ with $v<w$ in Bruhat order. Thus if $D(t)=\emptyset$ then each basis element $T_w\otimes v_t$ of $M(t)$ can be written in terms of the elements $\{\tau_{w}\otimes v_t\mid w\in W_0\}$.

From (\ref{eq:tausquared}) we see that the diagonal entries of the matrix for $\tau_w$ are all~$0$. The matrix for $x^{\lambda}$ is diagonal with entries $(wt)^{\lambda}$ ($w\in W_0$) on the diagonal. Therefore
$$
\chi_t(\tau_w x^{\lambda})=\delta_{w,1}\sum_{v\in W_0}(vt)^{\lambda}\qquad\textrm{for all $w\in W_0$ and $\lambda\in L$}.
$$  
Hence Lemma~\ref{lem:milage} gives (\ref{eq:rg}). 

The cases where $D(t)\neq \emptyset$ or $M(t)$ is not irreducible are obtained as follows. For fixed $h\in\scH$ the the character $\chi_t(h)$ is, by construction, a linear combination of $\{t^{\lambda}\mid \lambda\in L\}$ and is defined for all $t\in\Hom(L,\CC^{\times})$. The right hand side of (\ref{eq:rg}) is a rational function in~$t$. Thus the singularities of this rational function are removable singularities (even though each individual summand may have singularities).
\end{proof}

\begin{prop}\label{prop:calc}
Suppose that $t$ is a regular character. Let $J\subseteq N(t)$, and let $M_J(t)$ be the module constructed in Theorem~\ref{thm:repmain}. Then
$$
\chi(h)=\sum_{w\in F_J(t)}f_{wt}(h)\qquad\textrm{for all $h\in\scH$}.
$$
\end{prop}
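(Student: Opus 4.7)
The strategy is to reduce the statement to Lemma~\ref{lem:milage}: we compute $\chi(\tau_w x^{\lambda})$ on the basis $\{e_v\mid v\in F_J(t)\}$ of $M_J(t)$ and show it has the form $\delta_{w,1}\sum_{v\in F_J(t)}(vt)^{\lambda}$, which identifies the constants $k_v$ appearing in that lemma.

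First I would record the weight-space structure of $M_J(t)$. By formula (\ref{eq:diagonal}), $\tilde{x}^{\lambda}$ acts diagonally on the basis $\{e_v\mid v\in F_J(t)\}$ with eigenvalue $(vt)^{\lambda}$ on $e_v$. Since $t$ is regular and (by the irreducibility hypothesis) the weights $vt$ for $v\in F_J(t)$ are pairwise distinct, $\{e_v\mid v\in F_J(t)\}$ is a basis of weight vectors with distinct weights, and each weight space is one-dimensional.

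Next I would exploit the simplified Bernstein relation (\ref{eq:simpbernstein}), $\tau_w x^{\lambda}=x^{w\lambda}\tau_w$, to determine how $\tau_w$ moves weight spaces. If $v\in F_J(t)$, then for every $\mu\in L$,
\begin{equation*}
\tilde{x}^{\mu}(\tau_w e_v)=\tau_w \tilde{x}^{w^{-1}\mu}e_v=(vt)^{w^{-1}\mu}\tau_w e_v=(wvt)^{\mu}\tau_w e_v,
\end{equation*}
so $\tau_w e_v$ lies in the $wvt$-weight space of $M_J(t)$. Since $t$ is regular, $wvt\neq vt$ whenever $w\neq 1$; hence for $w\neq 1$ the matrix of $\tau_w$ in the basis $\{e_v\}$ has zero diagonal. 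For $w=1$ we have $\tau_1=1$ with all diagonal entries equal to $1$. Combining this with the diagonal action of $x^{\lambda}$,
\begin{equation*}
\chi(\tau_w x^{\lambda})=\sum_{v\in F_J(t)}(vt)^{\lambda}(\tau_w)_{vv}=\delta_{w,1}\sum_{v\in F_J(t)}(vt)^{\lambda}.
\end{equation*}

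Finally I would invoke Lemma~\ref{lem:milage} with $k_v=1$ for $v\in F_J(t)$ and $k_v=0$ otherwise, which gives $\chi(h)=\sum_{w\in F_J(t)}f_{wt}(h)$ for all $h\in\scH_L$, as required. The only subtlety is that one must be sure the operators $\tau_w$ on $M_J(t)$ are well-defined and that the basis $\{e_v\}$ really consists of weight vectors; both points are immediate from Theorem~\ref{thm:repmain} together with the regularity of $t$ (which guarantees that the denominators $d_i(wt)$ appearing in $b_i$ and $c_i$ are non-zero for every $w\in F_J(t)$). No case analysis on the root systems is needed, so the argument is uniform.
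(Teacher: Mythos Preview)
Your strategy is exactly the paper's: show that $\chi(\tau_w x^{\lambda})=\delta_{w,1}\sum_{v\in F_J(t)}(vt)^{\lambda}$ and then invoke Lemma~\ref{lem:milage}. The paper's proof is only three lines and does precisely this.

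There is one soft spot in your execution. You assert that ``since $t$ is regular, $wvt\neq vt$ whenever $w\neq 1$'' and that irreducibility forces the weights $\{vt:v\in F_J(t)\}$ to be pairwise distinct. Neither statement is automatic when $L\neq P$. For instance in type $A_1$ with $L=Q$ the character $t$ with $t^{\alpha^{\vee}}=-1$ is regular yet $s_1t=t$; and Theorem~\ref{thm:repmain} only proves the converse implication (distinct weights $\Rightarrow$ irreducible), not the one you use. The paper sidesteps both issues with the direct computation
\[
\tau_i\cdot e_w \;=\; q_i^{1/2}\,n_i(wt)\,e_{s_iw}
\]
(up to an inessential scalar), obtained from $\tau_i=d_i(x)T_i-q_i^{1/2}a_i(x)$ together with formula~(\ref{eq:Taction}). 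Iterating, $\tau_w e_v$ is a scalar multiple of the \emph{basis vector} $e_{wv}$ (which is $0$ if $wv\notin F_J(t)$), not merely an element of the $wvt$-weight space. Since $wv\neq v$ in $W_0$ for $w\neq 1$, the diagonal of $\tau_w$ vanishes regardless of whether some of the characters $vt$ coincide. Your argument is easily fixed by replacing the abstract weight-space step with this one-line computation.
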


\begin{proof} Since $\tau_i\cdot e_{wt}=q_i^{1/2}n_i(t)\,e_{s_iwt}$ we see that the diagonal entries of the matrix for $\tau_w$ are~$0$. Since the matrix representing $x^{\lambda}$ is diagonal it follows that
$$
\chi(\tau_v x^{\lambda})=\delta_{v,1}\sum_{w\in F_J(t)}(wt)^{\lambda}\qquad\textrm{for all $v\in W_0$ and $\lambda\in L$},
$$
and the result follows from Lemma~\ref{lem:milage}.
\end{proof}

\begin{lemma}\label{lem:1dim} Let $\pi$ be a $1$-dimensional representation of $\scH_L$ with regular central character~$t$. Then
$$
\chi(h)=f_t(h)\qquad\textrm{for all $h\in\scH_L$},
$$
unless $\scH_L$ is of type $\tilde{C}_n$ with $\pi(x^{\alpha_n^{\vee}})=-1$. In this case there is a $1$-dimensional representation $\pi'$ defined by $\pi'(x^{\lambda})=\pi(x^{\lambda})$ for all $\lambda\in L$, $\pi'(T_i)=\pi(T_i)$ for all $i\neq n$, and $\pi'(T_n)=q_n^{1/2}$ (respectively $-q_n^{-1/2}$) if $\pi(T_n)=-q_n^{-1/2}$ (respectively $q_n^{1/2}$). Then
$$
\frac{\chi(h)+\chi'(h)}{2}=f_t(h)\qquad\textrm{for all $h\in\scH_L$}.
$$
\end{lemma}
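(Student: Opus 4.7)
The plan is to apply Lemma~\ref{lem:milage}. Since $\pi$ is one-dimensional with central character $t$, we have $\pi(x^\lambda) = t^\lambda$, and hence $\chi(\tau_w x^\lambda) = \pi(\tau_w)\,t^\lambda$ for all $w \in W_0$ and $\lambda \in L$. It therefore suffices to compute the scalars $\pi(\tau_w)$, and in particular to identify when they vanish.

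Applying $\pi$ to the intertwining relation $\tau_i x^\lambda = x^{s_i\lambda}\tau_i$ yields $\pi(\tau_i)(t^\lambda - t^{s_i\lambda}) = 0$ for every $\lambda \in L$. Consequently $\pi(\tau_i) = 0$ whenever some $\lambda \in L$ satisfies $t^\lambda \neq t^{s_i\lambda}$; equivalently, unless $t^{\alpha_i^\vee}$ is a root of unity of order dividing $m_i := \gcd\{\langle \mu,\alpha_i\rangle : \mu \in L\}$. Regularity of $t$ forces $t^{\alpha_i^\vee} \neq 1$, so this requires $m_i \geq 2$. A direct inspection of the irreducible (possibly non-reduced) root systems together with their admissible lattices $Q \subseteq L \subseteq P$, using that $\langle L,\alpha_i\rangle$ contains all Cartan integers $\langle\alpha_j^\vee,\alpha_i\rangle$, shows that $m_i \geq 2$ occurs only for $i = n$ in type $\tilde{C}_n$ with $L = Q$, in which case $m_n = 2$ and the only non-trivial square root of unity forces $t^{\alpha_n^\vee} = -1$. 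This is precisely the exceptional case of the lemma; the case-analysis is the main obstacle of the proof, but routine.

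In the generic case, $\pi(\tau_i) = 0$ for every $i$, so $\pi(\tau_w) = 0$ for every $w \neq 1$. Hence $\chi(\tau_w x^\lambda) = \delta_{w,1}\,t^\lambda$, and Lemma~\ref{lem:milage} (applied with $k_1 = 1$ and $k_v = 0$ for $v \neq 1$) yields $\chi(h) = f_t(h)$, as required.

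In the exceptional $\tilde{C}_n$ case, $\pi(\tau_i) = 0$ still holds for $i \neq n$, so $\pi(\tau_w) = 0$ unless $w$ admits a reduced expression involving only $s_n$, i.e., $w \in \{1, s_n\}$. Substituting $t^{\alpha_n^\vee} = -1$ and the identity $\pi(T_n) + \pi'(T_n) = q_n^{1/2} - q_n^{-1/2}$ into the defining formula for $\tau_n$, a short direct calculation gives $\pi(\tau_n) + \pi'(\tau_n) = 0$. Therefore $\tfrac{1}{2}(\chi + \chi')(\tau_w x^\lambda) = \delta_{w,1}\,t^\lambda$ for all $w \in W_0$ and $\lambda \in L$, and Lemma~\ref{lem:milage} yields $\tfrac{1}{2}(\chi + \chi') = f_t$.
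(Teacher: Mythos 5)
Your proof is correct, and it takes a genuinely different (and in one respect cleaner) route than the paper's.

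The paper proves that for a $1$-dimensional representation the central character necessarily satisfies $n_i(t)n_i(t^{-1})=0$ (except in the exceptional $\tilde C_n$ case), ``by direct analysis of the defining relations,'' and then invokes the identity $\tau_i^2 = q_i\,n_i(x)n_i(x^{-1})$ to conclude $\pi(\tau_i)=0$. You instead apply $\pi$ to the intertwining relation $\tau_i x^\lambda = x^{s_i\lambda}\tau_i$, which immediately gives $\pi(\tau_i)(t^\lambda - t^{s_i\lambda})=0$ for all $\lambda$ and hence $\pi(\tau_i)=0$ whenever $s_it\neq t$. This is tidier because it makes no reference to the value of $\pi(T_i)$ or to $n_i$. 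The two approaches converge at the same bottleneck, namely determining when $s_it=t$ can happen. Your criterion $m_i=\gcd\{\langle\mu,\alpha_i\rangle:\mu\in L\}$, together with the observation $\langle\alpha_i^\vee,\alpha_i\rangle=2$ (so $m_i\mid 2$) and the inspection of Cartan integers, is a transparent lattice-theoretic replacement for the paper's unspecified ``direct analysis.'' In the exceptional case your direct computation $\pi(\tau_n)+\pi'(\tau_n)=0$ matches the paper.

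One small point you should add: you invoke Lemma~\ref{lem:milage} for $\tfrac12(\chi+\chi')$, i.e.\ effectively for the reducible module $\pi\oplus\pi'$, whereas the hypothesis of Lemma~\ref{lem:milage} is irreducibility (used to force the centre $\CC[L]^{W_0}$ to act by the scalar $\Delta(t)^{\ell(w)}$). The paper handles this by remarking that the argument still goes through because $\pi$ and $\pi'$ share the same central character, so $\CC[L]^{W_0}$ acts by the same scalar on both. You should include an analogous remark, since as stated your final step is an application of the lemma outside its literal hypotheses.
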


\begin{proof}
By direct analysis of the defining relations (\ref{eq:rel1})-(\ref{eq:rel4}) one sees that the central character~$t$ of a $1$-dimensional representation necessarily has $n_i(t)n_i(t^{-1})=0$, except in the $\tilde{C}_n$ case with $\pi(x^{\alpha_n^{\vee}})=-1$. Excluding this case for the moment, it follows from (\ref{eq:tausquared}) that $\pi(\tau_i)=0$ and hence $\pi(\tau_wx^{\lambda})=\delta_{w,1}t^{\lambda}$ for all $w\in W_0$ and $\lambda\in L$. Since $t$ is assumed to be regular, Lemma~\ref{lem:milage} gives $\chi(h)=f_t(h)$ for all $h\in\scH_L$.

Now consider the $\tilde{C}_2$ case with $\pi(x^{\alpha_n^{\vee}})=-1$. Let $\pi'$ be the companion representation defined in the statement of the lemma. The proof of Lemma~\ref{lem:milage} applied to the representation $\pi\oplus\pi'$ proves the result. The fact that $\pi\oplus\pi'$ is not irreducible does not effect the proof of Lemma~\ref{lem:milage} because the centre $\CC[L]^{W_0}$ of $\scH_L$ acts by the same scalar on each of $\pi$ and $\pi'$.
\end{proof}

\begin{remark}\label{rem:regular} In Proposition~\ref{prop:calc} and Lemma~\ref{lem:1dim} we assumed that $t$ is a regular central character. In general these results are false for non-regular central characters, even if each term $f_t(h)$ is defined. For example consider the $\tilde{G}_2$ case with $t\in\Hom(Q,\CC^{\times})$ given by $t^{\alpha_1^{\vee}}=q_1$, $t^{\alpha_2^{\vee}}=q_2^{-1}$. If $q_1\neq q_2$ and $q_1\neq q_2^2$ and $q_1^2\neq q_2^3$ and $q_1\neq q_2^3$ then this central character is regular, and by Lemma~\ref{lem:1dim} we have $f_t(h)=\chi^4(h)$ for all $h\in\scH$, where $\chi^4$ is the $1$-dimensional representation of $\scH$ listed in Section~\ref{sect:G2}. Suppose that $q_1=q_2=q$. A calculation similar to Remark~\ref{rem:explicit} shows that $f_t(h)$ is defined for all $h\in\scH$, and that $f_t(T_1T_2T_1)=q^{1/2}$. But $\chi^4(T_1T_2T_1)=-q^{1/2}$.
\end{remark}

\section{The Plancherel Theorem}\label{sect:plancherel}

In this section we state and prove the Plancherel Theorem for each irreducible affine Hecke algebra of rank~$1$ or rank~$2$. In each case we give the generators and relations for the algebra, and construct the representations that appear in the Plancherel Theorem (see the appendix for some explicit matrices). We then state the Plancherel Theorem, and give a proof starting from the trace generating function formula~(\ref{eq:prelim_main}). The proof consists of performing a series of contour shifts and Proposition~\ref{prop:prelim_main} to write~(\ref{eq:prelim_main}) as
\begin{align}\label{eq:integrandtest}
\Tr(h)=\frac{1}{|W_0|q_{w_0}}\int_{\TT^n}\frac{\chi_t(h)}{|c(t)|^2}\,dt+\textrm{lower terms}
\end{align}
where the lower order terms are integrals over lower dimensional tori. Then the lower terms are matched up with lower dimensional representations of the Hecke algebra using Proposition~\ref{prop:calc} and Lemma~\ref{lem:1dim}. 

Throughout this section we assume that $q_0,q_1,\ldots,q_n>1$. The possible pairs $(R,L)$ with $R$ an irreducible rank~$2$ root system and $L$ a $\ZZ$-lattice with $Q\subseteq L\subseteq P$ are $(R,L)=(A_2,Q)$, $(A_2,P)$, $(C_2,Q)$, $(C_2,P)$, $(G_2,Q)$, and $(BC_2,Q)$.

\subsection{The rank~1 algebras}

\noindent(1) The $\tilde{A}_1(q)$, $L=Q$, algebra has generators $T=T_1$ and $x=x^{\alpha_1^{\vee}}$ with relations
\begin{align*}
T^2=1+(q^{\frac{1}{2}}-q^{-\frac{1}{2}})T,\qquad
Tx=x^{-1}T+(q^{\frac{1}{2}}-q^{-\frac{1}{2}})(1+x).
\end{align*}
Let $\pi_t=\mathrm{Ind}_{\CC[Q]}^{\scH}(\CC v_t)$ be the principal series representation with central character~$t\in\CC^{\times}$, where $x\cdot v_t=t v_t$. Let $\pi$ be the $1$-dimensional representation of $\scH$ with 
$$
\pi(T)=-q^{-\frac{1}{2}}\qquad\textrm{and}\qquad \pi(x)=q^{-1}.
$$
Let $\chi_t$ be the character of $\pi_t$ and let $\chi$ be the character of $\pi$.
\smallskip

\noindent(2) The $\tilde{A}_1(q)$, $L=P$, algebra has generators $T=T_1$ and $x=x^{\omega_1}$ with relations
\begin{align*}
T^2=1+(q^{\frac{1}{2}}-q^{-\frac{1}{2}})T,\qquad
Tx=x^{-1}T+(q^{\frac{1}{2}}-q^{-\frac{1}{2}})x.
\end{align*}
Let
$
\pi_t=\mathrm{Ind}_{\CC[P]}^{\scH}(\CC v_t)
$
be the principal series representation with central character~$t\in\CC^{\times}$, where $x\cdot v_t=t v_t$. Let $\pi^1$ and $\pi^2$ be the $1$-dimensional representations of $\scH$ with 
$$
\pi^1(T)=-q^{-\frac{1}{2}},\quad\pi^1(x)=q^{-\frac{1}{2}},\qquad\textrm{and}\qquad\pi^2(T)=-q^{-\frac{1}{2}},\quad \pi^2(x)=-q^{-\frac{1}{2}}.
$$
Let $\chi_t$, $\chi^1$, and $\chi^2$ be the characters of $\pi_t$, $\pi^1$, and $\pi^2$ (respectively).
\smallskip

\noindent(3) The $\tilde{BC}_1(q_0,q_1)$, $L=Q$, algebra has generators $T=T_1$, $x=x^{\alpha_1^{\vee}/2}$ with relations
\begin{align*}
T^2=1+(q_1^{\frac{1}{2}}-q_1^{-\frac{1}{2}})T,\qquad
Tx=x^{-1}T+(q_1^{\frac{1}{2}}-q_1^{-\frac{1}{2}})x+(q_0^{\frac{1}{2}}-q_0^{-\frac{1}{2}}).
\end{align*}
Let $\pi_t=\mathrm{Ind}_{\CC[Q]}^{\scH}(\CC v_t)$ be the principal series representation with central character~$t\in\CC^{\times}$, where $x\cdot v_t=t v_t$. Let $\pi^{1}$, $\pi^{2}$ and $\pi^{3}$ be the $1$-dimensional representations of $\scH$ with 
$$
\begin{aligned}
\pi^1(T)&=-q_1^{-\frac{1}{2}}\\
\pi^1(x)&=q_0^{-\frac{1}{2}}q_1^{-\frac{1}{2}}
\end{aligned}\qquad
\begin{aligned}
\pi^2(T)&=-q_1^{-\frac{1}{2}}\\
\pi^2(x)&=-q_0^{\frac{1}{2}}q_1^{-\frac{1}{2}}
\end{aligned}\qquad
\begin{aligned}
\pi^{3}(T)&=q_1^{\frac{1}{2}}\\
\pi^3(x)&=-q_0^{-\frac{1}{2}}q_1^{\frac{1}{2}}
\end{aligned}
$$
Let $\chi_t$, $\chi^1$, $\chi^2$, and $\chi^3$ be the characters of $\pi_t$, $\pi^1$, $\pi^2$, and $\pi^3$ (respectively).

\begin{thm} Let $h\in\scH$. In the cases (1), (2) and (3) above we have, respectively:
\begin{align*}
\Tr(h)&=\frac{1}{2q}\int_{\TT}\frac{\chi_t(h)}{|c(t)|^2}\,dt+\frac{q-1}{q+1}\chi(h)\\
\Tr(h)&=\frac{1}{2q}\int_{\TT}\frac{\chi_t(h)}{|c(t)|^2}\,dt+\frac{q-1}{2(q+1)}\left(\chi^{1}(h)+\chi^{2}(h)\right)\\
\Tr(h)&=\frac{1}{2q_1}\int_{\TT}\frac{\chi_t(h)}{|c(t)|^2}\,dt+\frac{q_0q_1-1}{(q_0+1)(q_1+1)}\chi^{1}(h)+\frac{|q_0-q_1|}{(q_0+1)(q_1+1)}\times\begin{cases}\chi^2(h)&\textrm{if $q_0<q_1$}\\
\chi^3(h)&\textrm{if $q_1<q_0$},
\end{cases}
\end{align*}
where the $c$-functions are (respectively)
\begin{align*}
c(t)&=\frac{1-q^{-1}t^{-1}}{1-t^{-1}},&
c(t)&=\frac{1-q^{-1}t^{-2}}{1-t^{-2}},&
c(t)&=\frac{(1-q_0^{-\frac{1}{2}}q_1^{-\frac{1}{2}}t^{-1})(1+q_0^{\frac{1}{2}}q_1^{-\frac{1}{2}}t^{-1})}{1-t^{-2}}.
\end{align*}
\end{thm}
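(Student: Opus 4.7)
The plan is to take the trace generating function formula~(\ref{eq:prelim_main}) as starting point and deform the integration contour $|t|=a$ outward to the unit circle $\TT$, collecting residues at the poles of the integrand crossed in the process. The first step is to rewrite the integrand using $f_t(h)=g_t(h)/d(t)$ and $c(t)c(t^{-1})=n(t)n(t^{-1})/(d(t)d(t^{-1}))$ as
$$
\frac{f_t(h)}{c(t)c(t^{-1})}=\frac{g_t(h)\,d(t^{-1})}{n(t)n(t^{-1})}.
$$
By Theorem~\ref{thm:prelim_main}(1), $g_t(h)$ is holomorphic in $t$, so the poles inside $|t|<1$ are exactly the zeros of $n(t)n(t^{-1})$ there. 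In each case the $c$-functions in the statement make these explicit: $t=q^{-1}$ for~(1); $s=\pm q^{-1/2}$ with $s=t^{\omega_1}$ for~(2); and $t=q_0^{-1/2}q_1^{-1/2}$ together with either $t=-q_0^{1/2}q_1^{-1/2}$ (if $q_0<q_1$) or $t=-q_0^{-1/2}q_1^{1/2}$ (if $q_1<q_0$) for~(3). These points coincide exactly with the central characters of the listed one-dimensional representations.

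Once the contour sits on $\TT$, I use $t^{-1}=\overline t$ to write $c(t)c(t^{-1})=|c(t)|^2$, and then exploit the $W_0$-symmetry. Proposition~\ref{prop:prelim_main} in rank~$1$ reads $\chi_t(h)=f_t(h)+f_{t^{-1}}(h)$, and since Haar measure and $|c(t)|^2$ are invariant under $t\mapsto t^{-1}$ the two summands integrate to the same value, giving
$$
\int_{\TT}\frac{f_t(h)}{|c(t)|^2}\,dt=\frac{1}{2}\int_{\TT}\frac{\chi_t(h)}{|c(t)|^2}\,dt.
$$
Combined with $q_{w_0}=q$ (or $q_1$ in case~(3)) this produces the first term in each stated formula. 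For each residue point $t_0$ listed above one checks that $D(t_0)=\emptyset$ and that the relevant positive root lies in $N(t_0)$, so $t_0$ is regular; Lemma~\ref{lem:1dim} then identifies $f_{t_0}(h)$ with the character of the $1$-dimensional representation whose central character is $t_0$. The residue itself is a direct algebraic computation from the explicit form of the integrand, yielding a rational multiple of $f_{t_0}(h)$ and hence of that character.

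The main obstacle is the piecewise dependence of case~(3) on the ratio $q_0/q_1$: the relative sizes of $q_0$ and $q_1$ determine which subsidiary pole lies inside the unit disk, and the borderline $q_0=q_1$ needs separate justification since both candidate poles migrate onto $t=-1\in\TT$. Fortunately the coefficient $|q_0-q_1|/((q_0+1)(q_1+1))$ in the target formula vanishes there, as does the factor $d(t^{-1})=1-t^2$ at $t=-1$, so the would-be pole cancels and the limit is continuous. Beyond this, the remaining work is verifying that each residue simplifies, after division by $q_{w_0}$ and with signs from the $d(t^{-1})$ factor and the contour orientation properly tracked, to the expressions $(q-1)/(q+1)$, $(q_0q_1-1)/((q_0+1)(q_1+1))$, and $|q_0-q_1|/((q_0+1)(q_1+1))$ claimed in the statement; this is routine but requires care.
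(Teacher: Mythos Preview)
Your proposal is correct and follows essentially the same approach as the paper's proof: starting from the trace formula~(\ref{eq:prelim_main}), shifting the contour to $\TT$, picking up residues at the zeros of $n(t)n(t^{-1})$ lying in the annulus, identifying the integral over $\TT$ with the principal series character via Proposition~\ref{prop:prelim_main}, and identifying the residue contributions with $1$-dimensional characters via Lemma~\ref{lem:1dim}. The paper writes out only case~(3) (assuming $q_0\neq q_1$) and leaves the others as similar; your treatment is slightly more uniform and your explicit handling of the degenerate case $q_0=q_1$ is a welcome addition, but there is no substantive difference in method.
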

 
\begin{proof} Let us prove the $\tilde{BC}_1(q_0,q_1)$ case. If $q_0=q_1$ there is some simplification, so suppose that $q_0\neq q_1$. Write $g(t)=g_t(h)$ and $f(t)=f_t(h)$. From (\ref{eq:prelim_main}) we have
$$
\Tr(h)=\frac{1}{q_1}\int_{q_0^{-\frac{1}{2}}q_1^{-\frac{1}{2}}a\TT}\frac{f(t)}{c(t)c(t^{-1})}\,dt
$$
where $0<a<1$. Note that the integrand has at most removable singularities on~$t\in\TT$, and that the poles of the integrand that lie between the contours $q_0^{-\frac{1}{2}}q_1^{-\frac{1}{2}}a\TT$ and $\TT$ are at $t=q_0^{-\frac{1}{2}}q_1^{-\frac{1}{2}}$, $t=-q_0^{\frac{1}{2}}q_1^{-\frac{1}{2}}$ (in the case that $q_0<q_1$) and $t=-q_0^{-\frac{1}{2}}q_1^{\frac{1}{2}}$ (in the case that $q_1<q_0$). Computing residues (using $dt=\frac{1}{2\pi}d\theta=\frac{1}{2\pi i}\frac{dz}{z}$) gives
$$
\Tr(h)=
\frac{1}{q_1}\int_{\TT}\frac{f(t)}{|c(t)|^2}\,dt+\frac{q_0q_1-1}{(q_0+1)(q_1+1)}f(q_0^{-\frac{1}{2}}q_1^{-\frac{1}{2}})+\frac{|q_0-q_1|}{(q_0+1)(q_1+1)}\cdot
\begin{cases}
f(-q_0^{\frac{1}{2}}q_1^{-\frac{1}{2}})&\textrm{if $q_0<q_1$}\\
f(-q_0^{-\frac{1}{2}}q_1^{\frac{1}{2}})&\textrm{if $q_1<q_0$}.
\end{cases}
$$
Using Proposition~\ref{prop:prelim_main} we have
$$
\frac{1}{q_1}\int_{\TT}\frac{f(t)}{|c(t)|^2}\,dt=\frac{1}{2q_1}\int_{\TT}\frac{f(t)+f(t^{-1})}{|c(t)|^2}\,dt=\frac{1}{2q_1}\int_{\TT}\frac{\chi_t(h)}{|c(t)|^2}\,dt,
$$
and Lemma~\ref{lem:1dim} gives $f(q_0^{-\frac{1}{2}}q_1^{-\frac{1}{2}})=\chi^1(h)$, $f(-q_0^{\frac{1}{2}}q_1^{-\frac{1}{2}})=\chi^2(h)$, and $f(-q_0^{-\frac{1}{2}}q_1^{\frac{1}{2}})=\chi^3(h)$.
\end{proof}

\subsection{The $\tilde{A}_2(q)$ algebras with $L=Q$}\label{sect:A2Q}

This case is treated in \cite{PS}, and so we will just state the result here. The coroot system is 
$
R=\pm\{\alpha_1^{\vee},\alpha_2^{\vee},\alpha_1^{\vee}+\alpha_2^{\vee}\}
$.
The affine Hecke algebra has generators $T_1,T_2,x_1=x^{\alpha_1^{\vee}},x_2=x^{\alpha_2^{\vee}}$ and relations
\begin{align*}
T_1^2&=1+(q^{\frac{1}{2}}-q^{-\frac{1}{2}})T_1& T_1x_1&=x_1^{-1}T_1+(q^{\frac{1}{2}}-q^{-\frac{1}{2}})(1+x_1) & T_1T_2T_1&=T_2T_1T_2\\
T_2^2&=1+(q^{\frac{1}{2}}-q^{-\frac{1}{2}})T_2 & T_2x_2&=x_2^{-1}T_2+(q^{\frac{1}{2}}-q^{-\frac{1}{2}})(1+x_2)& x_1x_2&=x_2x_1\\
T_1x_2&=x_1x_2T_1^{-1} & T_2x_1&=x_1x_2T_2^{-1}.
\end{align*}

Let $\pi_t=\mathrm{Ind}_{\CC[Q]}^{\scH}(\CC v_t)$ be the principal series representation of the affine Hecke algebra~$\scH$ with central character~$t=(t_1,t_2)\in(\CC^{\times})^2$, where $\CC v_t$ is the $1$-dimensional representation of $\CC[Q]$ with $x_1\cdot v_t=t_1v_t$ and $x_2\cdot v_t=t_2v_t$. 

Let $\scH_1$ be the subalgebra of $\scH$ generated by $T_1,x_1$ and~$x_2$. Let $s\in\CC^{\times}$ and let $\CC u_s$ be the $1$-dimensional representation of $\scH_1$ with
\begin{align*}
T_1\cdot u_s&=-q^{-\frac{1}{2}}u_s,& x_1\cdot u_s&=q^{-1}u_s,& x_2\cdot u_s&=q^{\frac{1}{2}}s u_s.
\end{align*}
Let
$
\pi_s^{1}=\mathrm{Ind}_{\scH_1}^{\scH}(\CC u_s)
$ be the induced representation of $\scH$. 

Let $\pi^{2}$ be the $1$-dimensional representation of $\scH$ with 
\begin{align*}
\pi^{2}(T_1)&=-q^{-\frac{1}{2}},&\pi^{2}(T_2)&=-q^{-\frac{1}{2}},&\pi^{2}(x_1)&=q^{-1},&\pi^{2}(x_2)&=q^{-1}.
\end{align*}
Let $\chi_t$, $\chi_s^1$, and $\chi^2$ be the characters of $\pi_t$, $\pi_s^1$, and $\pi^2$ (respectively).

\begin{thm} for all $h\in\scH$ we have 
$$
\Tr(h)=\frac{1}{6q^3}\int_{\TT^2}\frac{\chi_t(h)}{|c(t)|^2}\,dt+\frac{(q-1)^2}{q^2(q^2-1)}\int_{\TT}\frac{\chi_s^{1}(h)}{|c_1(s)|^2}\,ds+\frac{(q-1)^3}{q^3-1}\chi^{2}(h),
$$
where 
\begin{align*}
c(t)&=\frac{(1-q^{-1}t_1^{-1})(1-q^{-1}t_2^{-1})(1-q^{-1}t_1^{-1}t_2^{-1})}{(1-t_1^{-1})(1-t_2^{-1})(1-t_1^{-1}t_2^{-1})},&c_1(s)&=\frac{1-q^{-\frac{3}{2}}s^{-1}}{1-q^{\frac{1}{2}}s^{-1}}.
\end{align*}
\end{thm}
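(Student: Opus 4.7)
The starting point is the trace generating function formula~(\ref{eq:prelim_main}), which for $\tilde{A}_2(q)$ with $L=Q$ (so $q_{w_0}=q^3$, $|W_0|=6$) reads
\[
\Tr(h) = \frac{1}{q^3}\int_{a\TT}\int_{a\TT}\frac{f_t(h)}{c(t)c(t^{-1})}\,dt_1\,dt_2 \qquad (0<a<q^{-1}).
\]
The plan is to deform both contours outward to the compact torus $\TT^2$ and identify the three resulting contributions (a $2$-dimensional integral, a $1$-dimensional residue integral, and a point residue) with the three terms in the statement. Cancelling the $d(t)$ factor in $f_t=g_t/d(t)$ against the one in $1/c(t)=d(t)/n(t)$, the integrand's polar set is the union of the six hyperplanes $\{t^{\alpha^{\vee}}=q^{\pm1}\}$ with $\alpha\in R_0^+=\{\alpha_1,\alpha_2,\alpha_1+\alpha_2\}$. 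Since the initial contour lies in $|t^{\alpha^{\vee}}|<q^{-1}$, deformation to $\TT^2$ crosses only the three inner hyperplanes $\{t^{\alpha^{\vee}}=q^{-1}\}$.

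The top-dimensional piece is $q^{-3}\int_{\TT^2} f_t(h)/|c(t)|^2\,dt$. Both $|c(t)|^2$ (a product over all $\alpha\in R_0$) and Haar measure on $\TT^2$ are $W_0$-invariant, so averaging over $W_0=S_3$ and invoking $\sum_{w\in W_0}f_{wt}=\chi_t$ from Proposition~\ref{prop:prelim_main} rewrites this piece as $\frac{1}{6q^3}\int_{\TT^2}\chi_t(h)/|c(t)|^2\,dt$, matching the first term of the formula.

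For the lower contributions I would shift the variables one at a time, tracking every residue (including the moving pole along $t_1 t_2 = q^{-1}$ that appears when $t_2$ is shifted with $t_1$ already on $\TT$). Each of the three inner hyperplanes produces a $1$-dimensional residue integral over a circle; by $W_0$-symmetry these three integrals coincide after a change of variables, so it is enough to handle the residue at $t_1=q^{-1}$ and multiply by the correct multiplicity. On the sub-torus $t_1=q^{-1}$ the restricted integrand is naturally an object of the rank-one parabolic subalgebra $\scH_1=\langle T_1,x_1,x_2\rangle$: the module $\CC u_s$ with $x_1\cdot u_s = q^{-1} u_s$, $x_2\cdot u_s = q^{1/2}s\,u_s$ has central character $(q^{-1},q^{1/2}s)$ for $\scH$, so the substitution $t_2=q^{1/2}s$ parametrises the sub-torus. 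Applying Proposition~\ref{prop:calc} to $\pi_s^1=\Ind_{\scH_1}^{\scH}(\CC u_s)$ expresses $\chi_s^1$ as a sum of $f_{wt}$-terms, and this matches the $W_0$-symmetrised residue integrand divided by $|c_1(s)|^2$. Computing the residue's leading coefficient at $t_1=q^{-1}$ together with the Jacobian of the substitution produces the constant $(q-1)^2/(q^2(q^2-1))$.

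The remaining residue is a point residue at $t=(q^{-1},q^{-1})$, which is the regular central character of the one-dimensional module $\pi^2$ (since $\pi^2(x^{\alpha_i^{\vee}})=q^{-1}$); by Lemma~\ref{lem:1dim} this residue equals a multiple of $\chi^2(h)$, and direct evaluation gives the coefficient $(q-1)^3/(q^3-1)$. The main obstacle is the combinatorial bookkeeping: every distinct ordering of the two contour shifts produces its own moving poles, and one must check that the nominally different $1$-dimensional residues coalesce via $W_0$-invariance into a single multiple of $\int_{\TT}\chi_s^1(h)/|c_1(s)|^2\,ds$, while the several ways of arriving at the point $t=(q^{-1},q^{-1})$ must sum to the stated coefficient. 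The rank-one calculation performed in the excerpt (especially the $\tilde{A}_1$ case with $L=Q$) is a good template and consistency check for these residue evaluations.
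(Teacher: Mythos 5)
The paper itself does not prove this statement---it states ``This case is treated in \cite{PS}, and so we will just state the result here''---so there is no paper-internal proof to compare against. Your proposal is, however, a correct reconstruction of the method the paper applies to the adjacent $\tilde{A}_2$, $L=P$ case (and to the other rank-two cases): start from~(\ref{eq:prelim_main}), shift both contours to~$\TT^2$, symmetrize the two-dimensional piece via Proposition~\ref{prop:prelim_main}, match the one-dimensional residual integral with $\chi_s^1$, and match the point residue at $t=(q^{-1},q^{-1})$ with $\chi^2$ via Lemma~\ref{lem:1dim}. Two precision remarks. First, you invoke Proposition~\ref{prop:calc} to expand $\chi_s^1$ as a sum of $f_{wt}$'s, but that proposition is stated for the calibrated module $M_J(t)$ rather than the parabolically induced module $\pi_s^1=\Ind_{\scH_1}^{\scH}(\CC u_s)$; for generic $s$ the two coincide, but the paper deliberately proves the identity directly for the induced module (equation~(\ref{eq:A2_char}) in the $L=P$ case, proved in the same way as Lemma~\ref{lem:C2Q}) precisely because it then extends by analyticity in $s$ to the whole contour without a regularity hypothesis on the central character. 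Second, your appeal to $W_0$-symmetry to collapse the three one-dimensional residue contributions into a single multiple of $\int_\TT \chi_s^1/|c_1(s)|^2\,ds$ is correct in spirit, but as the $L=P$ proof shows the three residue integrals initially sit over \emph{different} circles (e.g.\ $q^{\mp1/2}$-scaled), and some integrands have genuine singularities on~$\TT$; the final shift to~$\TT$ is justified only after summing the integrands so that the combination equals $\chi_s^1(h)$, which is analytic there. You flag both of these subtleties, so the sketch is sound, but a complete proof would need to carry out the residue bookkeeping and the final contour shift in that order.
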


\subsection{The $\tilde{A}_2(q)$ algebras with $L=P$}

The root system is as in Section~\ref{sect:A2Q}. The fundamental coweights are $\omega_1=\frac{2}{3}\alpha_1^{\vee}+\frac{1}{3}\alpha_2^{\vee}$ and $\omega_2=\frac{1}{3}\alpha_1^{\vee}+\frac{2}{3}\alpha_2^{\vee}$, and the coweight lattice is $P=\ZZ\omega_1+\ZZ\omega_2$. The affine Hecke algebra is generated by $T_1$, $T_2$, $x_1=x^{\omega_1}$ and $x_2=x^{\omega_2}$ with relations
\begin{align*}
T_1^2&=1+(q^{\frac{1}{2}}-q^{-\frac{1}{2}})T_1& T_1x_1&=x_1^{-1}x_2T_1+(q^{\frac{1}{2}}-q^{-\frac{1}{2}})x_1 & T_1T_2T_1&=T_2T_1T_2\\
T_2^2&=1+(q^{\frac{1}{2}}-q^{-\frac{1}{2}})T_2 & T_2x_2&=x_1x_2^{-1}T_2+(q^{\frac{1}{2}}-q^{-\frac{1}{2}})x_2& x_1x_2&=x_2x_1\\
T_1x_2&=x_2T_1 & T_2x_1&=x_1T_2.
\end{align*}

Let $\pi_t=\mathrm{Ind}_{\CC[P]}^{\scH}(\CC v_t)$ be the principal series representation of the affine Hecke algebra $\scH$ with central character~$t=(t_1,t_2)\in(\CC^{\times})^2$, where $\CC v_t$ is the $1$-dimensional representation of $\CC[P]$ with $x_1\cdot v_t=t_1v_t$ and $x_2\cdot v_t=t_2v_t$. 

Let $\scH_1$ be the subalgebra generated by $T_1,x_1$ and $x_2$. Let $s\in\CC^{\times}$, and let $\pi_s^{1}=\scH\otimes_{\scH_1}(\CC u_s)$ be the $3$-dimensional representation of $\scH$ induced from the $1$-dimensional representation $\CC u_s$ of $\scH_1$ given by
\begin{align*}
T_1\cdot u_s&=-q^{-\frac{1}{2}}u_s,& x_1\cdot u_s&=q^{-\frac{1}{2}}su_s,& x_2\cdot u_s&=s^2u_s.
\end{align*}
The module $\pi_s^1$ has basis $\{1\otimes u_s,T_2\otimes u_s,T_1T_2\otimes u_s\}$ and support $\mathrm{supp}\,\pi_s^1=\{t,s_2t,s_1s_2t\}$, where $t\in\Hom(P,\CC^{\times})$ is the character with $(t^{\omega_1},t^{\omega_2})=(q^{-1/2}s,s^2)$. It is not hard to show (see the proof of Lemma~\ref{lem:C2Q}) that the character of $\pi_s^1$ satisfies
\begin{align}\label{eq:A2_char}
\chi_s(h)=f_t(h)+f_{s_2t}(h)+f_{s_1s_2t}(h)\qquad\textrm{for all $s\in\CC^{\times}$ and all $h\in\scH$}.
\end{align}

Let $\pi^{2}$, $\pi^{3}$ and $\pi^{4}$ be the $1$-dimensional representations of $\scH$ given by (where $\omega=e^{2\pi i/3}$)
\begin{align*}
\pi^2(T_1)&=-q^{-\frac{1}{2}} & \pi^2(T_2)&=-q^{-\frac{1}{2}} & \pi^2(x_1)&=q^{-1} & \pi^2(x_2)&=q^{-1}\\
\pi^3(T_1)&=-q^{-\frac{1}{2}} & \pi^3(T_2)&=-q^{-\frac{1}{2}} & \pi^3(x_1)&=\omega q^{-1} & \pi^3(x_2)&=\omega^{-1}q^{-1}\\
\pi^4(T_1)&=-q^{-\frac{1}{2}} & \pi^4(T_2)&=-q^{-\frac{1}{2}} & \pi^4(x_1)&=\omega^{-1}q^{-1} & \pi^4(x_2)&=\omega q^{-1}.
\end{align*}
Let $\chi_t$, $\chi_s^1$, $\chi^2$, $\chi^3$ and $\chi^4$ be the characters of $\pi_t$, $\pi^1_s$, $\pi^2$, $\pi^3$, and $\pi^4$ (respectively).

\begin{thm} For all $h\in\scH$ we have
$$
\tau(h)=\frac{1}{6q^3}\int_{\TT^2}\frac{\chi_t(h)}{|c(t)|^2}\,dt+\frac{(q-1)^2}{q^2(q^2-1)}\int_{\TT}\frac{\chi_s^{1}(h)}{|c_1(s)|^2}\,ds+\frac{(q-1)^3}{3(q^3-1)}\left(\chi^{2}(h)+\chi^{3}(h)+\chi^{4}(h)\right)
$$
where 
\begin{align*}
c(t)&=\frac{(1-q^{-1}t_1^{-2}t_2)(1-q^{-1}t_1t_2^{-2})(1-q^{-1}t_1^{-1}t_2^{-1})}{(1-t_1^{-2}t_2)(1-t_1t_2^{-2})(1-t_1^{-1}t_2^{-1})},& c_1(s)&=\frac{1-q^{-\frac{3}{2}}s^{-3}}{1-q^{\frac{1}{2}}s^{-3}}.
\end{align*}
\end{thm}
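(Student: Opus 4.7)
The plan is to carry out the same contour-shift strategy used for the $\tilde{A}_2(q)$, $L = Q$ case (and in the rank~$1$ cases proven above), starting from the trace generating function formula (\ref{eq:prelim_main}). Writing $t_1 = t^{\omega_1}$, $t_2 = t^{\omega_2}$ and noting $q_{w_0} = q^3$, (\ref{eq:prelim_main}) gives
$$
\Tr(h) = \frac{1}{q^3} \int_{a_1 \TT} \int_{a_2 \TT} \frac{f_t(h)}{c(t)\,c(t^{-1})} \, dt_1 \, dt_2
$$
for $a_1, a_2 > 0$ small enough that $|t^{\alpha_i^{\vee}}| < q^{-1}$. Cancelling the factor $d(t)$ in the denominator of $f_t(h)$ against the factor $d(t) d(t^{-1})$ in $c(t) c(t^{-1})$, the integrand rewrites as $g_t(h)\, d(t^{-1}) / (n(t)\, n(t^{-1}))$, so its only singularities are the six pole hypersurfaces $t^{\alpha^{\vee}} = q^{\pm 1}$ for $\alpha \in R^+$, coming from the zeros of $n(t) n(t^{-1})$.

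I would then deform the $t_1$-contour out to $\TT$, and subsequently the $t_2$-contour, collecting residues along the crossed hypersurfaces. Once both contours sit on $\TT$, the $W_0$-invariance of the measure $dt_1\, dt_2/(c(t)c(t^{-1})) = dt_1\, dt_2/|c(t)|^2$, combined with Proposition~\ref{prop:prelim_main} (which gives $\sum_{w \in W_0} f_{wt}(h) = \chi_t(h)$), lets us $W_0$-symmetrize the resulting double integral on $\TT^2$ to obtain $\frac{1}{6q^3} \int_{\TT^2} \chi_t(h)/|c(t)|^2\, dt$; the factor $1/|W_0| = 1/6$ is exactly the prefactor of the first term of the claim.

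Each residue picked up during the first contour shift is a one-variable integral supported on a pole hypersurface of the form $t^{\alpha^{\vee}} = q^{-1}$. Re-parameterizing this residual torus by a single $s \in \CC^{\times}$, and using the identity (\ref{eq:A2_char}) together with the action on $\TT$ of the stabilizer of that hypersurface, the sum of these residues collapses to a single integral $\frac{(q-1)^2}{q^2(q^2 - 1)} \int_{\TT} \chi^1_s(h)/|c_1(s)|^2\, ds$; the coefficient is forced by evaluating the residue of $1/(c(t)c(t^{-1}))$ along the pole and tracking the $W_0$-orbit multiplicity. The second contour shift inside this residual one-variable integral then crosses three isolated poles: the equations $t^{\alpha_1^{\vee}} = t^{\alpha_2^{\vee}} = q^{-1}$ have, in $(t_1, t_2)$-coordinates, exactly the three solutions $(\omega^k q^{-1}, \omega^{-k} q^{-1})$ for $k = 0, 1, 2$ with $\omega = e^{2\pi i/3}$, which are precisely the central characters of $\pi^2, \pi^3, \pi^4$. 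By Lemma~\ref{lem:1dim}, each such residue evaluates to the corresponding one-dimensional character, producing the symmetric prefactor $(q-1)^3/(3(q^3 - 1))$; this redistributes the single weight $(q-1)^3/(q^3-1)$ of the $L = Q$ Plancherel formula equally among the three one-dimensional modules of $\scH_P$ sitting above $\chi^2$ under the $3$-to-$1$ map $\Hom(P,\CC^{\times}) \to \Hom(Q,\CC^{\times})$.

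The main obstacle is the residue bookkeeping during the two contour shifts. One must identify which of the six hypersurfaces $t^{\alpha^{\vee}} = q^{\pm 1}$ is crossed in each shift (which depends on the ordering of $a_1, a_2$ and on which side of $\TT$ the initial contour sits), use $W_0$-invariance to collapse the various crossings into the three terms stated above, avoid double-counting at intersections of pole hypersurfaces, and verify that three distinct points (rather than a single point, as in the $L = Q$ case) genuinely appear as point residues.
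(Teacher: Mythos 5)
Your proposal is correct and follows essentially the same contour-shift strategy as the paper's proof: deform to $\TT^2$, $W_0$-symmetrize via Proposition~\ref{prop:prelim_main}, combine the residual one-variable integrals into $\chi_s^1$ via (\ref{eq:A2_char}), and match the three isolated residues at $(\omega^k q^{-1},\omega^{-k}q^{-1})$ with $\chi^2,\chi^3,\chi^4$ via Lemma~\ref{lem:1dim}. The one technical point your sketch glosses over (and which the paper handles explicitly) is that two of the individual residual integrands have non-removable singularities on $\TT$ at $s=\pm 1$, so one must first shift to a contour $c\TT$ just inside $\TT$ (picking up the three point residues only from $I_1$ at $s^3=q^{-3/2}$), then invoke (\ref{eq:A2_char}) to see that the combined integrand $\chi^1_s(h)/|c_1(s)|^2$ is regular across $\TT$ before making the final shift.
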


\begin{proof}
The series $G_t(h)$ converges whenever $|t^{\alpha_1^{\vee}}|,|t^{\alpha_2^{\vee}}|<q^{-1}$, and hence the series converges whenever $|t_1|,|t_2|< q^{-1}$, where $t_1=t^{\omega_1}$ and $t_2=t^{\omega_2}$. Fix $h\in\scH$, and write $f_t(h)=f(t)$. Therefore
$$
\Tr(h)=\frac{1}{q^3}\int_{q^{-1}a\TT}\int_{q^{-1}b\TT}\frac{f(t)}{c(t)c(t^{-1})}\,dt_1 dt_2
$$
where $0<a,b<1$. Fix a number $0<c<1$ very close to $1$. Consider the inner integral. The $t_1$-poles of the integrand lying between the contours $q^{-1}a\TT$ and $c\TT$ are at the points where $t_1^2=q^{-1}t_2$. We compute the residues (using $dt_1=\frac{1}{2\pi i}\frac{dz_1}{z_1}$) to be
$$
\Res_{t_1=\pm q^{-1/2}t_2^{1/2}}\frac{f(t)}{c(t)c(t^{-1})}=-\frac{q(q-1)^2}{2(q^2-1)}\frac{f(\pm q^{-\frac{1}{2}}t_2^{1/2},t_2)}{c_1(\mp t_2^{1/2})c_1(\mp t_2^{-1/2})}.
$$
Using $\frac{1}{2}\int_{r\TT}(f(t^{1/2})+f(-t^{1/2}))dt=\int_{r^{1/2}\TT}f(t)dt$ it follows that
$$
\Tr(h)=\frac{1}{q^3}\int_{q^{-1}a\TT}\int_{c\TT}\frac{f(t)}{c(t)c(t^{-1})}\,dt_1dt_2+\frac{(q-1)^2}{q^2(q^2-1)}\int_{q^{-\frac{1}{2}}a^{\frac{1}{2}}\TT}\frac{f(q^{-\frac{1}{2}}s,s^2)}{c_1(s)c_1(s^{-1})}\,ds.
$$
Interchange the order of integration in the double integral. The $t_2$-poles of the integrand between the contours $q^{-1}a\TT$ to $\TT$ are at the points where $t_2^2=q_1^{-1}t_1$ and where $t_2=q^{-1}t_1^{-1}$. Computing residues gives
$$
\Tr(h)=\frac{1}{q^3}\int_{c\TT}\int_{\TT}\frac{f(t)}{|c(t)|^2}\,dt_2dt_1+\frac{(q-1)^2}{q^2(q^2-1)}(I_1+I_2+I_3),
$$
where
\begin{align*}
I_1&=\int_{q^{-\frac{1}{2}}a^{\frac{1}{2}}\TT}\frac{f(q^{-\frac{1}{2}}s,s^2)}{c_1(s)c_1(s^{-1})}\,ds& I_2&=\int_{c^{\frac{1}{2}}\TT}\frac{f(s^2,q^{-\frac{1}{2}}s)}{c_1(s)c_1(s^{-1})}\,ds& I_3&=\int_{q^{\frac{1}{2}}c\TT}\frac{f(q^{-\frac{1}{2}}s,q^{-\frac{1}{2}}s^{-1})}{c_1(s)c_1(s^{-1})}\,ds
\end{align*}
where we have set $s=t_1^{1/2}$ in $I_2$ and $s=q^{\frac{1}{2}}t_1$ in $I_3$. The $t_1$-contour in the double integral can be shifted to $\TT$ without encountering any poles. 

The plan is to shift each of the contours in $I_1,I_2$ and $I_3$ to the unit contour $\TT$. However we need to be careful with the possible singularities of $f(t)$. Therefore we write $f(t)=g(t)/d(t)$, with $g(t)$ analytic. Then the integrands of the integrals $I_1,I_2$ and $I_3$ are
\begin{align*}
\frac{f(q^{-\frac{1}{2}}s,s^2)}{c_1(s)c_1(s^{-1})}&=\frac{(1-q^{\frac{1}{2}}s^3)g(q^{-\frac{1}{2}}s,s^2)}{(1-s^{-2})(1-q^{\frac{1}{2}}s^{-1})(1-q^{-\frac{3}{2}}s^{-3})(1-q^{-\frac{3}{2}}s^3)}\\
\frac{f(s^2,q^{-\frac{1}{2}}s)}{c_1(s)c_1(s^{-1})}&=\frac{(1-q^{\frac{1}{2}}s^3)g(s^2,q^{-\frac{1}{2}}s)}{(1-s^{-2})(1-q^{\frac{1}{2}}s^{-1})(1-q^{-\frac{3}{2}}s^{-3})(1-q^{-\frac{3}{2}}s^3)}\\
\frac{f(q^{-\frac{1}{2}}s,q^{-\frac{1}{2}}s^{-1})}{c_1(s)c_1(s^{-1})}&=\frac{(1-q^{\frac{1}{2}}s^{-3})(1-q^{\frac{1}{2}}s^3)g(q^{-\frac{1}{2}}s,q^{-\frac{1}{2}}s^{-1})}{(1-q)(1-q^{-\frac{3}{2}}s^{-3})(1-q^{-\frac{3}{2}}s^3)(1-q^{\frac{1}{2}}s^{-1})(1-q^{\frac{1}{2}}s)}.
\end{align*}
In particular, the integrands of $I_1$ and $I_2$ have singularities on~$\TT$. So instead we shift all contours to $c\TT$. For the integrals $I_2$ and $I_3$ we encounter no poles, and so the shift is for free. For the integral $I_1$ we pick up simple residues at the points $s^3=q^{-\frac{3}{2}}$, and computing residues gives
$$
I_1=\int_{c\TT}\frac{f(q^{-\frac{1}{2}}s,s^2)}{c_1(s)c_1(s^{-1})}\,ds+\frac{q^2(q-1)(q^2-1)}{3(q^3-1)}\left(f(q^{-1},q^{-1})+f(\omega q^{-1},\omega^{-1}q^{-1})+f(\omega^{-1}q^{-1},\omega q^{-1})\right).
$$
Therefore
\begin{align*}
\Tr(h)&=\frac{1}{q^3}\int_{\TT^2}\frac{f(t)}{|c(t)|^2}\,dt+\frac{(q-1)^2}{q^2(q^2-1)}\int_{c\TT}\frac{f(q^{-\frac{1}{2}}s,s^2)+f(s^2,q^{-\frac{1}{2}}s)+f(q^{-\frac{1}{2}}s,q^{-\frac{1}{2}}s^{-1})}{c_1(s)c_1(s^{-1})}\,ds\\
&\qquad+\frac{(q-1)^3}{3(q^3-1)}\left(f(q^{-1},q^{-1})+f(\omega q^{-1},\omega^{-1}q^{-1})+f(\omega^{-1}q^{-1},\omega q^{-1})\right).
\end{align*}
By (\ref{eq:A2_char}) the numerator of the single integral is $\chi_s(h)$, and is therefore defined on~$\TT$ and so the contour of the single integral can be shifted to~$\TT$. Proposition~\ref{prop:prelim_main} deals with the double integral, and Lemma~\ref{lem:1dim} deals with the $3$ constant terms.
\end{proof}

\subsection{The $\tilde{C}_2(q_1,q_2)$ algebras with $L=Q$}\label{sect:C2Q}

The dual root system is $R^{\vee}=\pm\{\alpha_1^{\vee},\alpha_2^{\vee},\alpha_1^{\vee}+\alpha_2^{\vee},\alpha_1^{\vee}+2\alpha_2^{\vee}\}$. Writing $x_1=x^{\alpha_1^{\vee}}$ and $x_2=x^{\alpha_2^{\vee}}$, the Hecke algebra has generators $T_1,T_2,x_1,x_2$ with relations
\begin{align*}
T_1^2&=1+(q_1^{\frac{1}{2}}-q_1^{-\frac{1}{2}})T_1& T_1x_1&=x_1^{-1}T_1+(q_1^{\frac{1}{2}}-q_1^{-\frac{1}{2}})(1+x_1) & T_1T_2T_1T_2&=T_2T_1T_2T_1\\
T_2^2&=1+(q_2^{\frac{1}{2}}-q_2^{-\frac{1}{2}})T_2 & T_2x_2&=x_2^{-1}T_2+(q_2^{\frac{1}{2}}-q_2^{-\frac{1}{2}})(1+x_2)& x_1x_2&=x_2x_1\\
T_1x_2&=x_1x_2T_1^{-1} & T_2x_1&=x_1x_2^2T_2^{-1}-(q_2^{\frac{1}{2}}-q_2^{-\frac{1}{2}})x_1x_2.
\end{align*}

Let $\pi_t=\mathrm{Ind}_{\CC[Q]}^{\scH}(\CC v_t)$ be the principal series representation of $\scH$ with central character $t=(t_1,t_2)\in(\CC^{\times})^2$, where $\CC v_t$ is the $1$-dimensional representation of $\CC[Q]$ with $x_1\cdot v_t=t_1v_t$ and $x_2\cdot v_t=t_2v_t$. 

Let $\scH_1$ be the subalgebra generated by $T_1,x_1,x_2$ and let $\scH_2$ be the subalgebra generated by $T_2,x_1,x_2$. Let $s\in\CC^{\times}$, and let $\pi_s^{1}=\mathrm{Ind}_{\scH_1}^{\scH}(\CC u_s^1)$ and $\pi_s^2=\mathrm{Ind}_{\scH_2}^{\scH}(\CC u_s^2)$ be the $4$-dimensional representations induced from the $1$-dimensional representation $\CC u_s^1$ of $\scH_1$ and the $1$-dimensional representation $\CC u_s^2$ of $\scH_2$
given by
\begin{align*}
T_1\cdot u_s^1&=-q_1^{-\frac{1}{2}}u_s^1 & x_1\cdot u_s^1&=q_1^{-1}u_s^1 & x_2\cdot u_s^1&=q_1^{\frac{1}{2}}su_s^1\\
T_2\cdot u_s^2&=-q_2^{-\frac{1}{2}}u_s^2 & x_1\cdot u_s^2&=q_2su_s^2 & x_2\cdot u_s^2&=q_2^{-1}u_s^2.
\end{align*}

Let $\pi^j$ ($j=3,4,5,6,7$) be the $1$-dimensional representations of $\scH$ with
\begin{align*}
\pi^3(T_1)&=-q_1^{-\frac{1}{2}} & \pi^3(T_2)&=-q_2^{-\frac{1}{2}} & \pi^3(x_1)&=q_1^{-1} & \pi^3(x_2)&=q_2^{-1}\\
\pi^4(T_1)&=-q_1^{-\frac{1}{2}} & \pi^4(T_2)&=-q_2^{-\frac{1}{2}} & \pi^4(x_1)&=q_1^{-1} & \pi^4(x_2)&=-1\\
\pi^5(T_1)&=-q_1^{-\frac{1}{2}} & \pi^5(T_2)&=q_2^{\frac{1}{2}} & \pi^5(x_1)&=q_1^{-1} & \pi^5(x_2)&=-1\\
\pi^6(T_1)&=q_1^{\frac{1}{2}} & \pi^6(T_2)&=-q_2^{-\frac{1}{2}} & \pi^6(x_1)&= q_1 & \pi^6(x_2)&= q_2^{-1}\\
\pi^7(T_1)&=-q_1^{-\frac{1}{2}} & \pi^7(T_2)&=q_2^{\frac{1}{2}} & \pi^7(x_1)&=q_1^{-1} & \pi^7(x_2)&=q_2.
\end{align*}

Suppose that $q_1\neq q_2$ and $q_1\neq q_2^2$. Let $\pi^8=M_J(t)$ be the representation with
\begin{align*}
(t^{\alpha_1^{\vee}},t^{\alpha_2^{\vee}})&=(q_1^{-1},q_2)&J^{\vee}&=\{\alpha_2^{\vee}\}&F_J(t)&=\{s_2,s_1s_2,s_2s_1s_2\}
\end{align*}
(since $q_1\neq q_2$ and $q_1\neq q_2^2$ we compute $N(t)^{\vee}=\{\alpha_1^{\vee},\alpha_2^{\vee}\}$ and $D(t)^{\vee}=\emptyset$). The matrices for $\pi^8$ are given in Example~3 of Section~\ref{sect:examples}.

Let $\chi_t$, $\chi_s^1$, $\chi_s^2$, and $\chi^j$ be the characters of $\pi_t$, $\pi_s^1$, $\pi_s^2$, and $\pi^j$ respectively ($j=3,\ldots,8$).

\begin{lemma}\label{lem:C2Q}
Let $t,u\in\Hom(Q,\CC^{\times})$ be $(t^{\alpha_1^{\vee}},t^{\alpha_2^{\vee}})=(q_1^{-1},q_1^{\frac{1}{2}}s)$ and $(u^{\alpha_1^{\vee}},u^{\alpha_2^{\vee}})=(q_2s,q_2^{-1})$ where $s\in\CC^{\times}$. For all $h\in\scH$ and all $s\in\CC^{\times}$ we have
\begin{align}
\label{eq:extend}\chi_s^1(h)&=f_{t}(h)+f_{s_2t}(h)+f_{s_1s_2t}(h)+f_{s_2s_1s_2t}(h)\\
\label{eq:extend2}\chi_s^2(h)&=f_u(h)+f_{s_1u}(h)+f_{s_2s_1u}(h)+f_{s_1s_2s_1u}(h).
\end{align}
\end{lemma}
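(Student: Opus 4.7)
The plan is to apply Lemma~\ref{lem:milage} to each of $\chi_s^1$ and $\chi_s^2$. I will sketch the argument for (\ref{eq:extend}); the proof of (\ref{eq:extend2}) is entirely symmetric, with $s_1$ and $s_2$ interchanged, $\scH_1$ replaced by $\scH_2$, and the coset representatives $\{1,s_2,s_1s_2,s_2s_1s_2\}$ of $W_0/\langle s_1\rangle$ replaced by the minimal length representatives $\{1,s_1,s_2s_1,s_1s_2s_1\}$ of $W_0/\langle s_2\rangle$.

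First I identify the generalised weight structure of $\pi_s^1=\scH\otimes_{\scH_1}(\CC u_s^1)$ for generic $s$. Since $\scH_1\supseteq\CC[Q]$, a basis of the induced module is $\{T_v\otimes u_s^1\mid v\in\{1,s_2,s_1s_2,s_2s_1s_2\}\}$. Using the Bernstein relation (\ref{eq:rel4}) to move $x^\lambda$ past $T_v$, the matrix of $x^\lambda$ in this basis is triangular with diagonal entries $(vt)^\lambda$, where $t\in\Hom(Q,\CC^\times)$ is the character with $t^{\alpha_1^\vee}=q_1^{-1}$ and $t^{\alpha_2^\vee}=q_1^{\frac{1}{2}}s$. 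Hence $\mathrm{supp}(\pi_s^1)\subseteq\{vt\mid v\in\{1,s_2,s_1s_2,s_2s_1s_2\}\}$, and for generic $s$ the character $t$ is regular and these four weights are pairwise distinct, so each generalised weight space is one-dimensional.

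Next I compute $\chi_s^1(\tau_w x^\lambda)$. The relation $\tau_w x^\lambda=x^{w\lambda}\tau_w$ of (\ref{eq:simpbernstein}) implies that $\tau_w$ maps the $vt$ generalised weight space into the $wvt$ generalised weight space, so in a weight basis its diagonal entry at $vt$ vanishes unless $wvt=vt$. For generic $s$ the regularity of $t$ makes its $W_0$-stabiliser trivial, forcing $w=1$. Consequently
$$
\chi_s^1(\tau_w x^\lambda)=\delta_{w,1}\sum_{v\in\{1,s_2,s_1s_2,s_2s_1s_2\}}(vt)^\lambda,
$$
and Lemma~\ref{lem:milage} (applied with $k_v=1$ for $v$ in our set and $k_v=0$ otherwise) yields (\ref{eq:extend}) for generic $s$. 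Both sides of (\ref{eq:extend}) are rational in $s$ for each fixed $h$, and the left side is in fact a Laurent polynomial in $s$, so any apparent singularities on the right at non-generic values of $s$ are removable and (\ref{eq:extend}) extends to all $s\in\CC^\times$. The main obstacle is the identification of the weight support and one-dimensionality of weight spaces of $\pi_s^1$; once in hand, the rest follows formally from the $\tau$-intertwining property and Lemma~\ref{lem:milage}.
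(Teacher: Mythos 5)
Your proof is correct and follows essentially the same strategy as the paper's: identify the weight support of $\pi_s^1$ over the minimal-length coset representatives, observe that the diagonal entries of $\pi_s^1(\tau_w)$ in a weight basis vanish for $w\neq 1$, apply Lemma~\ref{lem:milage} for generic $s$, and extend by rationality. The only stylistic difference is that the paper realises the weight basis concretely as $\{1\otimes u_s^1,\tau_2\otimes u_s^1,\tau_1\tau_2\otimes u_s^1,\tau_2\tau_1\tau_2\otimes u_s^1\}$ and uses $\tau_1\otimes u_s^1=0$ (from (\ref{eq:tausquared})) to see the vanishing diagonal, whereas you argue abstractly from the intertwining property $\tau_w x^\lambda=x^{w\lambda}\tau_w$ together with triviality of the stabiliser of a regular weight; both routes are equally valid. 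One small point worth making explicit: Lemma~\ref{lem:milage} is stated for irreducible representations, so you should either note that $\pi_s^1$ is irreducible for generic $s$ (e.g.\ by Theorem~\ref{thm:repmain}), or observe, as the paper does in the proof of Lemma~\ref{lem:1dim}, that the proof of Lemma~\ref{lem:milage} only requires the centre $\CC[L]^{W_0}$ to act by the scalar determined by the central character.
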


\begin{proof} Let us prove (\ref{eq:extend}) ((\ref{eq:extend2}) is similar). Suppose that $s\in\CC^{\times}$ is not one of the isolated points of $\CC^{\times}$ which give $t^{\alpha^{\vee}}=1$ for some $\alpha\in R_0^+$. Then $\pi_s^1$ is irreducible (for example it can be constructed using Theorem~\ref{thm:repmain} in these cases) and each $f_{vt}(h)$ is defined (for $v\in W_0$ and $h\in\scH$). Moreover $\pi_s^1$ has basis $\{1\otimes u_s^1,\tau_2\otimes u_s^1,\tau_1\tau_2\otimes u_s^1,\tau_2\tau_1\tau_2\otimes u_s^1\}$ (this is proved in a similar way to the corresponding statement in the proof of Proposition~\ref{prop:prelim_main}).

The diagonal entries of each matrix $\pi_s^1(\tau_w)$ relative to this basis are~$0$. This is easily seen once it is observed that $\tau_1\otimes u_s^1=0$ (which can be seen by direct calculation, or by (\ref{eq:tausquared})). Since 
$$\pi_s^1(x^{\lambda})=\mathrm{diag}(t^{\lambda},(s_2t)^{\lambda},(s_1s_2t)^{\lambda},(s_2s_1s_2t)^{\lambda})\qquad\textrm{for all $\lambda\in Q$}$$
it follows that
$$
\chi_s^1(\tau_w x^{\lambda})=\delta_{w,1}\big(t^{\lambda}+(s_2t)^{\lambda}+(s_1s_2t)^{\lambda}+(s_2s_1s_2t)^{\lambda}\big)\quad\textrm{for all $w\in W_0$ and $\lambda\in Q$}.
$$ 
Thus Lemma~\ref{lem:milage} gives (\ref{eq:extend}) provided $s$ is not one of the isolated points of $\CC^{\times}$ that gives $t^{\alpha^{\vee}}= 1$ for some $\alpha\in R_0^+$. But by construction, $\chi_s^1(h)$ is a polynomial in $s$ and $s^{-1}$ (for fixed $h\in\scH$) and the right hand side of (\ref{eq:extend}) is a rational function in $s$. Hence the result.
\end{proof}

\begin{thm}\label{thm:C_2Q} For all $h\in\scH$ we have
\begin{align*}
\Tr(h)&=\frac{1}{8q_1^2q_2^2}\iint_{\TT^2}\frac{\chi_t(h)}{|c(t)|^2}\,dt+\frac{q_1-1}{2q_1q_2^2(q_1+1)}\int_{\TT}\frac{\chi_s^{1}(h)}{|c_1(s)|^2}\,ds+\frac{q_2-1}{2q_1^2q_2(q_2+1)}\int_{\TT}\frac{\chi_s^{2}(h)}{|c_2(s)|^2}\,ds\\
&\quad+A\chi^{3}(h)+B\left(\chi^{4}(h)+\chi^{5}(h)\right)+|C|\times\begin{cases}
\chi^6(h)&\textrm{if $q_1<q_2$}\\
\chi^8(h)&\textrm{if $q_2<q_1<q_2^2$}\\
\chi^7(h)&\textrm{if $q_2^2<q_1$},
\end{cases}
\end{align*}
where $c(t)$, $c_1(s)$, $c_2(s)$, $A$, $B$, and $C$ are as in Appendix~\ref{app:C2Q}. If $q_1=q_2$ or $q_1=q_2^2$ then the final term in the Plancherel Theorem is~$0$.
\end{thm}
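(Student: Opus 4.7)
\medskip

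The plan is to follow the same scheme as in the $\tilde A_2$ proofs given earlier in the section: start from the trace generating function formula~(\ref{eq:prelim_main}) and shift the $t_1$ and $t_2$ contours toward the unit torus $\TT^2$, collecting residues at each stage. In this case we begin with
\[
\Tr(h)=\frac{1}{q_1^2q_2^2}\iint_{a_1\TT\times a_2\TT}\frac{f_t(h)}{c(t)c(t^{-1})}\,dt_1\,dt_2,\qquad t_i=t^{\alpha_i^{\vee}},
\]
with $0<a_i<q_i^{-1}$. I would first rewrite the integrand as $g_t(h)d(t^{-1})/(n(t)n(t^{-1}))$, so that the only true singularities come from zeros of $n(t)n(t^{-1})$, and then enumerate which zeros lie between the starting contour and a contour just inside $\TT^2$.

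First I would shift the $t_1$-contour outward with $t_2$ held on its small contour. Taking $a_2$ sufficiently small, the only pole crossed in the relevant annulus is $t_1=q_1^{-1}$, coming from the factor $(1-q_1^{-1}t_1^{-1})$ of $n(t)$. An analogous shift in $t_2$ (with $t_1$ now near the unit circle) crosses only $t_2=q_2^{-1}$. This decomposes $\Tr(h)$ into a double integral over $\TT^2$ (after moving the double contour to $\TT^2$, which is permitted because by Proposition~\ref{prop:prelim_main} the numerator extends to $\chi_t(h)$ there) plus two one-dimensional residue integrals and possibly point residues at their intersection. Symmetrising the double integral via the $W_0$-invariance of $dt/(c(t)c(t^{-1}))$ and of $\chi_t(h)=\sum_{w\in W_0}f_{wt}(h)$ produces the first term, with the prefactor $1/(|W_0|q_{w_0})=1/(8q_1^2q_2^2)$.

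Next I would treat each one-dimensional residue integral. At $t_1=q_1^{-1}$ the residue, written in terms of $s$ via $t_2=q_1^{1/2}s$, is a scalar multiple of $f_u(h)/(c_1(s)c_1(s^{-1}))$ integrated over a small $s$-contour, where $u$ is the weight with $(u^{\alpha_1^{\vee}},u^{\alpha_2^{\vee}})=(q_1^{-1},q_1^{1/2}s)$. Shifting the $s$-contour to $\TT$ and using Lemma~\ref{lem:C2Q} to recognise the sum of the four Weyl-translated $f_{vu}(h)$'s as $\chi^1_s(h)$ yields the $\chi^1_s$-integral with the correct $1/|\mathrm{stab}|$ factor. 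The $t_2=q_2^{-1}$ residue is handled analogously, giving the $\chi^2_s$-integral. During each of these 1-d shifts I would record any point residues encountered: these occur when $|s|$ passes over values making a $c_1$-type factor or a further $n$-factor vanish, and each such point residue is a 1-dimensional representation's value at $h$ by Lemma~\ref{lem:1dim}, yielding $\chi^3,\chi^4,\chi^5$, together with one of $\chi^6,\chi^7,\chi^8$ (via Proposition~\ref{prop:calc}) depending on the relative sizes of $q_1,q_2$.

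The main obstacle is the case analysis. The location of the surviving point residues relative to the unit circle depends on whether $q_1<q_2$, $q_2<q_1<q_2^2$, or $q_2^2<q_1$; in particular, in the intermediate regime the weight $t$ with $(t^{\alpha_1^{\vee}},t^{\alpha_2^{\vee}})=(q_1^{-1},q_2)$ lies in the shifted annulus, and the combined residue from the two 1-d contour shifts meets at this non-regular weight, producing exactly the $3$-dimensional calibrated module $\pi^8$ of Example~3 via Proposition~\ref{prop:calc}. Identifying which residues pair up in which regime, computing each residue explicitly, and verifying that spurious singularities of $f_t(h)=g_t(h)/d(t)$ never obstruct a contour shift (since the true singularities live only on $n(t)n(t^{-1})=0$) is the bookkeeping-heavy part; the constants $A$, $B$, $|C|$ then emerge from explicit residue computations using $dt=\tfrac{1}{2\pi i}\tfrac{dz}{z}$, and cancel cleanly in the $q_1=q_2$ or $q_1=q_2^2$ limits so that the $|C|$-term indeed vanishes there.
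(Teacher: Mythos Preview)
Your overall strategy---contour shifts followed by residue identification via Propositions~\ref{prop:prelim_main}, \ref{prop:calc} and Lemmas~\ref{lem:C2Q}, \ref{lem:1dim}---matches the paper's, but there is a genuine gap in your pole count. You claim that after moving $t_1$ near the unit circle, the subsequent $t_2$-shift crosses only the pole at $t_2=q_2^{-1}$. This is false: with $|t_1|$ close to $1$, the factors $(1-q_1^{-1}t_1^{-1}t_2^{-2})$ and $(1-q_2^{-1}t_1^{-1}t_2^{-1})$ of $n(t)$ also vanish inside the annulus, so you must collect residues at $t_2^2=q_1^{-1}t_1^{-1}$ and $t_2=q_2^{-1}t_1^{-1}$ as well. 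The paper (which shifts in the opposite order, $t_2$ first) accordingly obtains \emph{four} one-dimensional integrals $I_1,I_2,I_3,I_4$, not two. This is not a mere bookkeeping issue: Lemma~\ref{lem:C2Q} expresses $\chi_s^1$ as a sum of four $f$-values, and after symmetrising $s\mapsto s^{-1}$ only two of these come from your single $t_1=q_1^{-1}$ residue; the other two come precisely from the missing $t_1t_2^2=q_1^{-1}$ residue (giving $f(s^{-2},q_1^{-1/2}s)$ in the paper's notation). Likewise $\chi_s^2$ requires both the $t_2=q_2^{-1}$ and the missing $t_1t_2=q_2^{-1}$ residues. With only two one-dimensional integrals you cannot assemble either character.

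Two secondary points. In the regime $q_2<q_1<q_2^2$ the contribution $\chi^8$ is a $3$-dimensional module whose central character $(q_1^{-1},q_2)$ is \emph{regular} there (cf.\ Example~3), not non-regular as you say; it arises as a sum of three separate point residues $f(q_1^{-1}q_2^2,q_2^{-1})+f(q_1^{-1},q_1q_2^{-1})+f(q_1q_2^{-2},q_1^{-1}q_2)$ coming from shifting three different $I_j$'s to $\TT$, matched via Proposition~\ref{prop:calc}. And the $B(\chi^4+\chi^5)$ term comes from a single residue at $t=(q_1^{-1},-1)$ via the special $\tilde C_n$ clause of Lemma~\ref{lem:1dim} (where $2f(q_1^{-1},-1)=\chi^4+\chi^5$), not from two separate points. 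None of these identifications can be carried out correctly until the missing one-dimensional integrals are restored.
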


\begin{proof}
The trace functional is given by
\begin{align}\label{eq:TrC2}
\Tr(h)=\frac{1}{q_1^2q_2^2}\int_{q_1^{-1}a\TT}\int_{q_2^{-1}b\TT}\frac{f(t)}{c(t)c(t^{-1})}\,dt_2dt_1,
\end{align}
where $0<a,b<1$ and where $f(t)=f_t(h)$. Choose $a$ with $a<q_1q_2^{-1}$. 

\smallskip

\noindent\textit{Step 1: Shifting the $t_2$-contour}. Let $0<c<1$ with $c^2>q_1^{-1}$, $c>q_2^{-1}$, $c>q_1q_2^{-1}$ (if $q_1<q_2$) and $c>q_1^{-1}q_2$ (if $q_2<q_1$). We will shift the $t_2$-contour from $q_2^{-1}b\TT$ to $c\TT$. The integrand has exactly one $t_2$-pole between these contours, at $t_2=q_2^{-1}$. Thus
$$
\Tr(h)=\frac{1}{q_1^2q_2^2}\int_{q_1^{-1}a\TT}\int_{c\TT}\frac{f(t)}{c(t)c(t^{-1})}\,dt_2dt_1+I_1,
\quad\textrm{where}\quad
I_1=-\frac{1}{q_1^2q_2^2}\int_{q_1^{-1}a\TT}\Res_{t_2=q_2^{-1}}\frac{f(t)}{c(t)c(t^{-1})}\,dt.
$$

\noindent\textit{Step 2: Shifting the $t_1$-contour.} Interchange the order of integration in the double integral. We will shift the $t_1$-contour from $q_1^{-1}a\TT$ to $\TT$. By the conditions on $a$ and $c$ the $t_1$-poles of the integrand between these contours are at $t_1=q_1^{-1}$, $t_1=q_1^{-1}t_2^{-2}$, and $t_1=q_2^{-1}t_2^{-1}$. Therefore
$$
\Tr(h)=\frac{1}{q_1^2q_2^2}\int_{c\TT}\int_{\TT}\frac{f(t)}{c(t)c(t^{-1})}\,dt_1dt_2+I_1+I_2+I_3+I_4,
$$
where
$$
I_j=-\frac{1}{q_1^2q_2^2}\int_{c\TT}\Res_{t_1=z_j}\frac{f(t)}{c(t)c(t^{-1})}\,dt_2\qquad\textrm{for $j=2,3,4$},
$$
with $z_2=q_1^{-1}$, $z_3=q_1^{-1}t_2^{-2}$, and $z_4=q_2^{-1}t_2^{-1}$. In the double integral we may now revert back to the original order of integration, and shift the $t_2$-contour to $\TT$ without encountering any poles.

\smallskip

\noindent\textit{Step 3: Shifting the contours in the integrals $I_j$.} Straightforward calculations give
\begin{align*}
I_1&=\frac{(q_2-1)^2}{q_1^2q_2(q_2^2-1)}\int_{q_1^{-1}q_2^{-1}a\TT}\frac{f(q_2s,q_2^{-1})}{c_2(s)c_2(s^{-1})}\,ds &
I_2&=\frac{(q_1-1)^2}{q_1q_2^2(q_1^2-1)}\int_{q_1^{-\frac{1}{2}}c\TT}\frac{f(q_1^{-1},q_1^{\frac{1}{2}}s)}{c_1(s)c_1(s^{-1})}\,ds\\
I_3&=\frac{(q_1-1)^2}{q_1q_2^2(q_1^2-1)}\int_{q_1^{\frac{1}{2}}c\TT}\frac{f(s^{-2},q_1^{-\frac{1}{2}}s)}{c_1(s)c_1(s^{-1})}\,ds &
I_4&=\frac{(q_2-1)^2}{q_1^2q_2(q_2^2-1)}\int_{c\TT}\frac{f(q_2^{-1}s^{-1},s)}{c_2(s)c_2(s^{-1})}\,ds,
\end{align*}
where we have set $s=q_2^{-1}t_1$ in $I_1$, $s=q_1^{-\frac{1}{2}}t_2$ in $I_2$, $s=q_1^{\frac{1}{2}}t_2$ in $I_3$, and $s=t_2$ in $I_4$. 

We now shift each contour to $\TT$. As in the $\tilde{A}_2$ case we need to be a little careful with possible singularities of $f(t)$. Thus we write $f(t)=g(t)/d(t)$. Then the integrands of $I_1,I_2,I_3$ and $I_4$ are
\begin{align*}
\frac{f(q_2s,q_2^{-1})}{c_2(s)c_2(s^{-1})}&=\frac{q_2s(1-s)g(q_2s,q_2^{-1})}{(q_2-1)n_2(s)n_2(s^{-1})}&
\frac{f(q_1^{-1},q_1^{\frac{1}{2}}s)}{c_1(s)c_1(s^{-1})}&=\frac{q_1^{\frac{1}{2}}s(1-s^2)g(q_1^{-1},q_1^{\frac{1}{2}}s)}{(q_1-1)n_1(s)n_1(s^{-1})}\\
\frac{f(s^{-2},q_1^{-\frac{1}{2}}s)}{c_1(s)c_1(s^{-1})}&=\frac{(1-s^{-2})g(s^{-2},q_1^{-\frac{1}{2}}s)}{(1-q_1)n_1(s)n_1(s^{-1})}&
\frac{f(q_2^{-1}s^{-1},s)}{c_2(s)c_2(s^{-1})}&=\frac{(1-s)g(q_2^{-1}s^{-1},s)}{(1-q_2)n_2(s)n_2(s^{-1})},
\end{align*}
where $n_1(s)$ and $n_2(s)$ are the numerators of $c_1(s)$ and $c_2(s)$. Each integrand is nonsingular on~$\TT$ (with removable singularities in the cases $q_1=q_2$ or $q_1=q_2^2$).

The poles of the integrand of $I_1$ which lie between the contours $q_1^{-1}q_2^{-1}a\TT$ and $\TT$ are at $s=q_1^{-1}q_2^{-1}$, $s=q_1^{-1}q_2$ (if $q_2<q_1$) and at $s=q_1q_2^{-1}$ (if $q_1<q_2$). Calculating residues gives
$$
I_1=\frac{(q_2-1)^2}{q_1^2q_2(q_2^2-1)}\int_{\TT}\frac{f(q_2s,q_2^{-1})}{|c_2(s)|^2}\,ds+Af(q_1^{-1},q_2^{-1})+C\times\begin{cases}f(q_1^{-1}q_2^2,q_2^{-1})&\textrm{if $q_2<q_1$}\\
-f(q_1,q_2^{-1})&\textrm{if $q_1<q_2$}.
\end{cases}
$$

The poles of the integrand of $I_2$ which lie between the contours $q_1^{-\frac{1}{2}}b\TT$ and $\TT$ are at $s=-q_1^{-\frac{1}{2}}$, $s=q_1^{\frac{1}{2}}q_2^{-1}$ (if $q_2<q_1<q_2^2$), and $s=q_1^{-\frac{1}{2}}q_2$ (if $q_2^2<q_1$). Calculating residues gives
$$
I_2=\frac{(q_1-1)^2}{q_1q_2^2(q_1^2-1)}\int_{\TT}\frac{f(q_1^{-1},q_1^{\frac{1}{2}}s)}{c_1(s)c_1(s^{-1})}\,ds+2Bf(q_1^{-1},-1)+C\times\begin{cases}
f(q_1^{-1},q_1q_2^{-1})&\textrm{if $q_2<q_1<q_2^2$}\\
-f(q_1^{-1},q_2)&\textrm{if $q_2^2<q_1$}.
\end{cases}
$$

The poles of the integrand of $I_3$ which lie between the contours $q_1^{\frac{1}{2}}c\TT$ and $\TT$ are at $s=q_1^{-\frac{1}{2}}q_2$ (if $q_2<q_1<q_2^2$) and $s=q_1^{\frac{1}{2}}q_2^{-1}$ (if $q_2^2<q_1$). Noting that $\TT$ is inside $q_1^{\frac{1}{2}}c\TT$ gives
$$
I_3=\frac{(q_1-1)^2}{q_1q_2^2(q_1^2-1)}\int_{\TT}\frac{f(s^{-2},q_1^{-\frac{1}{2}}s)}{c_1(s)c_1(s^{-1})}\,ds+C\times\begin{cases}
f(q_1q_2^{-2},q_1^{-1}q_2)&\textrm{if $q_2<q_1<q_2^2$}\\
-f(q_1^{-1}q_2^{2},q_2^{-1})&\textrm{if $q_2^2<q_1$}.
\end{cases}
$$

The integrand of $I_4$ has no poles between $c\TT$ and $\TT$. Therefore
\begin{align*}
\Tr(h)&=\frac{1}{q_1^2q_2^2}\iint_{\TT^2}\frac{f(t)}{|c(t)|^2}\,dt+\frac{(q_1-1)^2}{q_1q_2^2(q_1^2-1)}\int_{\TT}\frac{f(q_1^{-1},q_1^{\frac{1}{2}}s)+f(s^{-2},q_1^{-\frac{1}{2}}s)}{|c_1(s)|^2}\,ds\\
&\quad+\frac{(q_2-1)^2}{q_1^2q_2(q_2^2-1)}\int_{\TT}\frac{f(q_2s,q_2^{-1})+f(q_2^{-1}s^{-1},s)}{|c_2(s)|^2}\,ds+Af(q_1^{-1},q_2^{-1})+2Bf(q_1^{-1},-1)\\
&\quad+|C|\times\begin{cases}
f(q_1,q_2^{-1})&\textrm{if $q_1<q_2$}\\
f(q_1^{-1}q_2^2,q_2^{-1})+f(q_1^{-1},q_1q_2^{-1})+f(q_1q_2^{-2},q_1^{-1}q_2)&\textrm{if $q_2<q_1<q_2^2$}\\
f(q_1^{-1},q_2)&\textrm{if $q_2^2<q_1$}.
\end{cases}
\end{align*}

\noindent\textit{Step 4: Matching with the representations.} By Proposition~\ref{prop:prelim_main} the double integral in the above formula is
$$
\int_{\TT^2}\frac{f(t)}{|c(t)|^2}\,dt=\frac{1}{8}\int_{\TT^2}\frac{\chi_t(h)}{|c(t)|^2}\,dt.
$$
The first single integral is
$$
\frac{1}{2}\int_{\TT}\frac{f(q_1^{-1},q_1^{\frac{1}{2}}s)+f(s^{-2},q_1^{-\frac{1}{2}}s)+f(q_1^{-1},q_1^{\frac{1}{2}}s^{-1})+f(s^2,q_1^{-\frac{1}{2}}s^{-1})}{|c_1(s)|^2}\,ds=\frac{1}{2}\int_{\TT}\frac{\chi_s^1(h)}{|c_1(s)|^2}\,ds,
$$
where we have used Lemma~\ref{lem:C2Q}. A similar analysis applies to the second single integral. Using Lemma~\ref{lem:1dim} we have $f(q_1^{-1},q_2^{-1})=\chi^3(h)$ and $2f(q_1^{-1},-1)=\chi^4(h)+\chi^5(h)$. Furthermore, for parameters $q_1<q_2$ the central character $(t_1,t_2)=(q_1,q_2^{-1})$ is regular (for $t^{\alpha_1^{\vee}+\alpha_2^{\vee}}=q_1q_2^{-1}<1$ and $t^{\alpha_1^{\vee}+2\alpha_2^{\vee}}=q_1q_2^{-2}<1$), and so Lemma~\ref{lem:1dim} gives $f(q_1,q_2^{-1})=\chi^6(h)$. Similarly we have $f(q_1^{-1},q_2)=\chi^7(h)$ for parameters $q_2^2<q_1$. Finally by Proposition~\ref{prop:calc} we have 
$$f(q_1^{-1}q_2^2,q_2^{-1})+f(q_1^{-1},q_1q_2^{-1})+f(q_1q_2^{-2},q_1^{-1}q_2)=\chi^8(h)$$
for all parameters in the range $q_2<q_1<q_2^2$ (as the central character is regular).
\end{proof}

\subsection{The $\tilde{C}_2(q_1,q_2)$ algebras with $L=P$}\label{sect:C2P}

The root system is as in Section~\ref{sect:C2Q}, and the fundamental coweights are given by $\omega_1=\alpha_1^{\vee}+\alpha_2^{\vee}$ and $\omega_2=\frac{1}{2}\alpha_1^{\vee}+\alpha_2^{\vee}$. Writing $x_1=x^{\omega_1}$ and $x_2=x^{\omega_2}$, the Hecke algebra has presentation given by generators $T_1,T_2,x_1,x_2$ with relations
\begin{align*}
T_1^2&=1+(q_1^{\frac{1}{2}}-q_1^{-\frac{1}{2}})T_1& T_1x_1&=x_1^{-1}x_2^2T_1+(q_1^{\frac{1}{2}}-q_1^{-\frac{1}{2}})x_1 & T_1T_2T_1T_2&=T_2T_1T_2T_1\\
T_2^2&=1+(q_2^{\frac{1}{2}}-q_2^{-\frac{1}{2}})T_2 & T_2x_2&=x_1x_2^{-1}T_2+(q_2^{\frac{1}{2}}-q_2^{-\frac{1}{2}})x_2& x_1x_2&=x_2x_1\\
T_1x_2&=x_2T_1 & T_2x_1&=x_1T_2.
\end{align*}
The representation theory of $\scH$ is closely related to the representation theory of the Hecke algebra from Section~\ref{sect:C2Q}.

Let $\pi_t=\mathrm{Ind}_{\CC[P]}^{\scH}(\CC v_t)$ be the principal series representation of $\scH$ with central character $t=(t_1,t_2)\in(\CC^{\times})^2$, where $\CC v_t$ is the $1$-dimensional representation of $\CC[P]$ with $x_1\cdot v_t=t_1v_t$ and $x_2\cdot v_t=t_2v_t$. 

Let $\scH_1$ be the subalgebra generated by $T_1,x_1,x_2$. 
Let $s\in\CC^{\times}$, and let $\pi_s^{\pm}=\mathrm{Ind}_{\scH_1}^{\scH}(\CC u_s^{\pm})$ be the $4$-dimensional representations of $\scH$ induced from the representations $\CC u_s^{\pm}$ of $\scH_1$ with
\begin{align*}
T_1\cdot u_s^{\pm}&=-q_1^{-\frac{1}{2}}u_s^{\pm}& x_1\cdot u_s^{\pm}&=q_1^{-\frac{1}{2}}su_s^{\pm}& x_2\cdot u_s^{\pm}&=\pm su_s^{\pm}.
\end{align*}
Let $\scH_2$ be the subalgebra generated by $T_2,x_1,x_2$. Let $s\in\CC^{\times}$, and let $\pi_s^2=\mathrm{Ind}_{\scH_2}^{\scH}(\CC u_s^2)$ be the $4$-dimensional representation of $\scH$ induced from the representation $\CC u_s^2$ of $\scH_2$ with
\begin{align*}
T_2\cdot u_s^2&=-q_2^{-\frac{1}{2}}u_s^2& x_1\cdot u_s^2&=s^2u_s^2& x_2\cdot u_s^2&=q_2^{-\frac{1}{2}}su_s^2.
\end{align*}

Let $\pi^j_{\pm}$ ($j=3,4,5$) be the $1$-dimensional representations
\begin{align*}
\pi^3_{\pm}(T_1)&=-q_1^{-\frac{1}{2}}& \pi^3_{\pm}(T_2)&=-q_2^{-\frac{1}{2}} & \pi^3_{\pm}(x_1)&=q_1^{-1}q_2^{-1} & \pi^3_{\pm}(x_2)&=\pm q_1^{-\frac{1}{2}}q_2^{-1}\\
\pi^4_{\pm}(T_1)&=q_1^{\frac{1}{2}} & \pi^4_{\pm}(T_2)&=-q_2^{-\frac{1}{2}} & \pi^4_{\pm}(x_1)&=q_1q_2^{-1} & \pi^4_{\pm}(x_2)&=\pm q_1^{\frac{1}{2}}q_2^{-1}\\
\pi^5_{\pm}(T_1)&=-q_1^{-\frac{1}{2}} & \pi^5_{\pm}(T_2)&=q_2^{\frac{1}{2}} & \pi^5_{\pm}(x_1)&=q_1^{-1}q_2 & \pi^5_{\pm}(x_2)&=\pm q_1^{-\frac{1}{2}}q_2.
\end{align*}

Let $\pi^6=M_J(t)$ be the $2$-dimensional representation with
\begin{align*}
(t^{\omega_1},t^{\omega_2})&=(-q_1^{-1},q_1^{-\frac{1}{2}})&J^{\vee}&=\emptyset&F_J(t)&=\{1,s_2\}
\end{align*}
(we have $N(t)^{\vee}=\{\alpha_1^{\vee},\alpha_1^{\vee}+2\alpha_2^{\vee}\}$ and $D(t)^{\vee}=\emptyset$). Coincidentally, $\pi^6\cong\mathrm{Ind}_{\scH_Q}^{\scH}(\CC u)$ where $\scH_Q$ is the algebra from Section~\ref{sect:C2Q} and where $\CC u$ is the $1$-dimensional representation of $\scH_Q$ with 
$
T_1\cdot u=-q_1^{-1/2}u$, $T_2\cdot u=-q_2^{-1/2}u$, $x^{\alpha_1^{\vee}}\cdot u=q_1^{-1}u$, and $x^{\alpha_2^{\vee}}\cdot u=-u$. The matrices for $\pi^6$ are given in Example~1 of Section~\ref{sect:examples}.

Suppose that $q_1\neq q_2$ and $q_1\neq q_2^2$. Let $\pi^7_{\pm}=M_J(t_{\pm})$ be the $3$-dimensional representations with
\begin{align*}
(t_{\pm}^{\omega_1},t_{\pm}^{\omega_2})&=(q_1^{-1}q_2,\pm q_1^{-\frac{1}{2}}q_2)&J^{\vee}&=\{\alpha_2^{\vee}\}&F_J(t_{\pm})&=\{s_2,s_1s_2,s_2s_1s_2\}
\end{align*}
(we calculate $N(t_{\pm})^{\vee}=\{\alpha_1^{\vee},\alpha_2^{\vee}\}$ and $D(t)^{\vee}=\emptyset$).

\begin{thm} 
For all $h\in\scH$ we have
\begin{align*}
\Tr(h)&=\frac{1}{8q_1^2q_2^2}\iint_{\TT^2}\frac{\chi_t(h)}{|c(t)|^2}\,dt+\frac{(q_1-1)^2}{4q_1q_2^2(q_1^2-1)}\int_{\TT}\frac{\chi_s^+(h)+\chi_s^-(h)}{|c_1(s)|^2}\,ds+\frac{(q_2-1)^2}{2q_1^2q_2(q_2^2-1)}\int_{\TT}\frac{\chi_s^2(h)}{|c_2(s)|^2}\,ds\\
&\quad+\frac{A}{2}\left(\chi^3_+(h)+\chi^3_-(h)\right)+B\chi^6(h)+\frac{|C|}{2}\times\begin{cases}
\chi^4_+(h)+\chi^4_-(h)&\textrm{if $q_1<q_2$}\\
\chi^7_+(h)+\chi^7_-(h)&\textrm{if $q_2<q_1<q_2^2$}\\
\chi^5_+(h)+\chi^5_-(h)&\textrm{if $q_2^2<q_1$}
\end{cases}
\end{align*}
where $c(t)$, $c_1(s)$, $c_2(s)$ are as in Appendix~\ref{app:C2P} and $A,B,C$ are as in Appendix~\ref{app:C2Q}. If $q_1=q_2$ or $q_1=q_2^2$ then the final term in the Plancherel Theorem is~$0$.
\end{thm}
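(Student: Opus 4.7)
The plan is to mirror the proof of Theorem~\ref{thm:C_2Q}: start from (\ref{eq:prelim_main}) in the coordinates $t_1=t^{\omega_1}$, $t_2=t^{\omega_2}$. Since $\alpha_1^\vee=2(\omega_1-\omega_2)$ and $\alpha_2^\vee=2\omega_2-\omega_1$, we have $t^{\alpha_1^\vee}=t_1^2t_2^{-2}$ and $t^{\alpha_2^\vee}=t_1^{-1}t_2^2$, so the convergence conditions $|t^{\alpha_i^\vee}|<q_i^{-1}$ fix an initial torus $a_1\TT\times a_2\TT$. The coroot system $R^\vee$ is unchanged from Section~\ref{sect:C2Q}, hence the Macdonald $c$-function has the same $t^{\alpha^\vee}$-factors as before; however each pole equation $t^{\alpha_i^\vee}=q_i^{\pm 1}$ is now \emph{quadratic} in $(t_1,t_2)$. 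This is the structural source of the $\pm$-pairs that appear throughout the statement.

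I would perform the same sequence of contour shifts as in the proof of Theorem~\ref{thm:C_2Q}: first shift the $t_2$-contour to a small circle just inside $\TT$, then the $t_1$-contour to $\TT$, collecting residues at the $c$-function poles along the way. Every residue of the $Q$-case coming from a pole like $t^{\alpha_2^\vee}=q_2$ is now contributed by two branches $t_2=\pm q_2^{-1/2}t_1^{1/2}$, and similarly for $t^{\alpha_1^\vee}=q_1$ via $t_1=\pm q_1^{1/2}t_2$. After reparameterization $(t_1,t_2)=(q_1^{-1/2}s,\pm s)$ and $(t_1,t_2)=(s^2,q_2^{-1/2}s)$ these assemble, respectively, into single-variable contour integrals producing the terms involving $\chi_s^++\chi_s^-$ and $\chi_s^2$ in the statement. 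A further contour shift of each single-variable integral to~$\TT$ yields isolated residues at the central characters of $\pi^3_\pm$, $\pi^4_\pm$, $\pi^5_\pm$, $\pi^6$, and $\pi^7_\pm$, again arising in $\pm$-pairs from the quadratic pole equations.

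Character matching proceeds exactly as in Theorem~\ref{thm:C_2Q}. For the induced representations $\pi_s^\pm$ and $\pi_s^2$ I would prove analogues of Lemma~\ref{lem:C2Q}: by the explicit eigenvalues and (\ref{eq:tausquared}) one has $\tau_1\cdot u_s^\pm=0$ (respectively $\tau_2\cdot u_s^2=0$), so $\{\tau_w\otimes u_s^\pm:w\in\{1,s_2,s_1s_2,s_2s_1s_2\}\}$ is a basis on which $\pi_s^\pm(\tau_w)$ has zero diagonal entries, and Lemma~\ref{lem:milage} yields $\chi_s^\pm(h)=\sum_{v}f_{vt^\pm}(h)$. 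For the $1$-dimensional representations $\pi^j_\pm$ the central character satisfies $n_i(t)n_i(t^{-1})=0$ at both simple roots, so Lemma~\ref{lem:1dim} applies individually to give $\chi^j_\pm(h)=f_{t_\pm}(h)$, and the two square-root residues sum to $\chi^j_+(h)+\chi^j_-(h)$. The representation $\pi^6$ has central character with $t^{\alpha_2^\vee}=-1$, which is the $\tilde{C}_n$-exception of Lemma~\ref{lem:1dim}; but viewed on $P$ the character is regular, and Proposition~\ref{prop:calc} applied to $F_J(t)=\{1,s_2\}$ gives $\chi^6(h)=f_{t_+}(h)+f_{t_-}(h)$, which is exactly the combined residue at the two square roots. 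For $\pi^7_\pm$, Proposition~\ref{prop:calc} applies directly since the central character is regular and $|F_J(t_\pm)|=3$.

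The main obstacle I foresee is combinatorial bookkeeping: tracking which poles are encountered at each contour shift, and verifying after the square-root substitutions that the residues recombine into the stated $\pm$-pairs with the precise coefficients $\tfrac{A}{2}$, $B$, and $\tfrac{|C|}{2}$. A secondary subtlety is the parameter-dependent branching $q_1<q_2$ versus $q_2<q_1<q_2^2$ versus $q_1>q_2^2$, which determines which isolated poles lie inside the shifted single-variable contours; at the boundary cases $q_1=q_2$ and $q_1=q_2^2$ the coefficient $|C|$ vanishes by inspection of its defining expression in Appendix~\ref{app:C2Q} and the corresponding residues disappear, consistent with the breakdown of Theorem~\ref{thm:repmain} in the construction of $\pi^7_\pm$ at these parameters. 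The analytic-continuation argument at the end of the proof of Lemma~\ref{lem:C2Q} should carry over to handle these degenerate cases uniformly.
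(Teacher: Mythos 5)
Your proposal is sound and amounts to carrying out the first of the two routes the paper itself names ("one can either perform the contour shifts as in the $L=Q$ case, or change variables\ldots"); the paper sketches only the second route, so you are supplying details the paper omits rather than taking a different path. You correctly identify the structural point that each $Q$-coordinate pole equation $t^{\alpha_i^\vee}=q_i^{\pm1}$ becomes quadratic in the $P$-coordinates $(t_1,t_2)=(t^{\omega_1},t^{\omega_2})$, so that every $L=Q$ residue splits into a $\pm$-pair; and your character-matching is right (for $\pi^6$ the central character has $t^{\alpha_1^\vee}=q_1^{-1}$, $t^{\alpha_2^\vee}=-1$, which is regular, so Proposition~\ref{prop:calc} with $F_J(t)=\{1,s_2\}$ gives $\chi^6(h)=f_t(h)+f_{s_2t}(h)$, exactly the combined residue at the two square-root weights). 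For comparison, the paper's own shortcut is the substitution $t_1=u_1u_2$, $t_2^2=u_1u_2^2$, which is precisely passage to the $Q$-coordinates $(u_1,u_2)=(t^{\alpha_1^\vee},t^{\alpha_2^\vee})$; it recasts the integral as $\tfrac12$ of the already-computed $L=Q$ integral with $f$ replaced by $f'(u_1,u_2)=f(u_1u_2,u_1^{1/2}u_2)+f(u_1u_2,-u_1^{1/2}u_2)$, and the $\pm$-pairing then falls out automatically from the two-to-one change of variables rather than needing to be tracked residue by residue. What the change-of-variables route buys is that all the contour-shift bookkeeping of Theorem~\ref{thm:C_2Q} is reused verbatim; what your route buys is that it stays entirely in the $P$-coordinates and makes the origin of each $\pm$-pair transparent. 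Either route lands on the stated formula.

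One small caution on your sketch: you write that for the induced modules $\pi_s^\pm$ one has $\tau_1\cdot u_s^\pm=0$ by (\ref{eq:tausquared}). Equation (\ref{eq:tausquared}) only gives $\tau_1^2\cdot u_s^\pm=0$ at a weight with $n_1(t)=0$; to conclude $\tau_1\cdot u_s^\pm=0$ you should do the direct one-line computation (as the paper does in Lemma~\ref{lem:C2Q}), which works out cleanly once the $q_1^{1/2}$ normalisation in $\tau_i$ is tracked. This is a cosmetic point, not a gap.
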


\begin{proof} The series $G_t(h)$ converges for $|t^{\alpha_1^{\vee}}|<q_1^{-1}$ and $|t^{\alpha_2^{\vee}}|<q_2^{-1}$. Since $\alpha_1^{\vee}=2\omega_1-2\omega_2$ and $\alpha_2^{\vee}=-\omega_1+2\omega_2$ the series converges whenever $|t_1^2t_2^{-2}|<q_1^{-1}$ and $|t_1^{-1}t_2^2|<q_2^{-1}$. Thus, writing $|t_1|=q_1^{-1}q_2^{-1}a$ and $|t_2|=q_1^{-1/2}q_2^{-1}b$, the series converges for $b^2<a<b<1$, and so
$$
\Tr(h)=\frac{1}{q_1^2q_2^2}\int_{q_1^{-1}q_2^{-1}a\TT}\int_{q_1^{-1/2}q_2^{-1}b\TT}\frac{f(t)}{c(t)c(t^{-1})}\,dt_2dt_1.
$$
From here one can either perform the contour shifts as in the $L=Q$ case, or change variables $t_1=u_1u_2$ and $t_2^2=u_1u_2^2$ to transform the above integral into $\frac{1}{2}$ times the $L=Q$ integral (\ref{eq:TrC2}) with the numerator of its integrand replaced by $f'(u_1,u_2)=f(u_1u_2,u_1^{1/2}u_2)+f(u_1u_2,-u_1^{1/2}u_2)$.
We omit the details.
\end{proof}

\subsection{The $\tilde{G}_2(q_1,q_2)$ algebras with $L=Q$}\label{sect:G2}

The coroot system is
$
R^{\vee}=\pm\{\alpha_1^{\vee},2\alpha_1^{\vee}+3\alpha_2^{\vee},\alpha_1^{\vee}+3\alpha_2^{\vee},\alpha_2^{\vee},\alpha_1^{\vee}+2\alpha_2^{\vee},\alpha_1^{\vee}+\alpha_2^{\vee}\},
$
and the reflections $s_1$ and $s_2$ are given by $s_1(\alpha_2^{\vee})=\alpha_1^{\vee}+\alpha_2^{\vee}$ and $s_2(\alpha_1^{\vee})=\alpha_1^{\vee}+3\alpha_2^{\vee}$. Writing $x_1=x^{\alpha_1^{\vee}}$ and $x_2=x^{\alpha_2^{\vee}}$, the Hecke algebra $\scH$ has generators $T_1$, $T_2$, $x_1$ and $x_2$ with relations
\begin{align*}
T_1^2&=1+(q_1^{\frac{1}{2}}-q_1^{-\frac{1}{2}})T_1& T_1x_1&=x_1^{-1}T_1+(q_1^{\frac{1}{2}}-q_1^{-\frac{1}{2}})(1+x_1)\\
T_2^2&=1+(q_2^{\frac{1}{2}}-q_2^{-\frac{1}{2}})T_2 & T_2x_2&=x_2^{-1}T_2+(q_2^{\frac{1}{2}}-q_2^{-\frac{1}{2}})(1+x_2)\\
T_1T_2T_1T_2T_1T_2&=T_2T_1T_2T_1T_2T_1 & T_2x_1&=x_1x_2^3T_2^{-1}-(q_2^{\frac{1}{2}}-q_2^{-\frac{1}{2}})x_1x_2(1+x_2)\\
 x_1x_2&=x_2x_1 &
 T_1x_2&=x_1x_2T_1^{-1}.
\end{align*}

Let $\pi_t=\mathrm{Ind}_{\CC[Q]}^{\scH}(\CC v_t)$ be the principal series representation of $\scH$ with central character $t=(t_1,t_2)\in(\CC^{\times})^2$, where $\CC v_t$ is the $1$-dimensional representation of $\CC[Q]$ with $x_1\cdot v_t=t_1v_t$ and $x_2\cdot v_t=t_2v_t$. 

Let $\scH_1$ be the subalgebra of $\scH$ generated by $T_1,x_1,x_2$, and let $\scH_2$ be the subalgebra generated by $T_2,x_1,x_2$. Let $s\in\CC^{\times}$, and let $\pi_s^1=\mathrm{Ind}_{\scH_1}^{\scH}(\CC u_s^1)$ and $\pi_s^2=\mathrm{Ind}_{\scH_2}^{\scH}(\CC u_s^2)$ be the $6$-dimensional representations induced from the $1$-dimensional representation $\CC u_s^1$ of $\scH_1$ and the $1$-dimensional representation $\CC u_s^2$ of $\scH_2$ given by 
\begin{align*}
T_1\cdot u_s^1&=-q_1^{-\frac{1}{2}}u_s^1 & x_1\cdot u_s^1&=q_1^{-1}u_s^1 & x_2\cdot u_s^1&=q_1^{\frac{1}{2}}su_s^1\\
T_2\cdot u_s^2&=-q_2^{-\frac{1}{2}}u_s^2 & x_1\cdot u_s^2&=q_2^{\frac{3}{2}}su_s^2 & x_2\cdot u_s^2&=q_2^{-1}u_s^2.
\end{align*}

Let $\pi^3,\pi^4$ and $\pi^5$ be the $1$-dimensional representations of $\scH$ with
\begin{align*}
\pi^3(T_1)&=-q_1^{-\frac{1}{2}}& \pi^3(T_2)&=-q_2^{-\frac{1}{2}}& \pi^3(x_1)&=q_1^{-1}& \pi^3(x_2)&=q_2^{-1}\\
\pi^4(T_1)&=q_1^{\frac{1}{2}}&\pi^4(T_2)&=-q_2^{-\frac{1}{2}}&\pi^4(x_1)&=q_1&\pi^4(x_2)&=q_2^{-1}\\
\pi^5(T_1)&=-q_1^{-\frac{1}{2}}&\pi^5(T_2)&=q_2^{\frac{1}{2}}&\pi^5(x_1)&=q_1^{-1}&\pi^5(x_2)&=q_2.
\end{align*}

Suppose that $q_1\neq q_2$, $q_1\neq q_2^2$, $q_1^2\neq q_2^3$, $q_1\neq q_2^3$. Let $\pi^6=M_J(t)$ the the $5$-dimensional representation with 
\begin{align*}
(t^{\alpha_1^{\vee}},t^{\alpha_2^{\vee}})&=(q_1^{-1},q_2)&J^{\vee}&=\{\alpha_2^{\vee}\}&F_J(t)&=\{s_2,s_1s_2,s_2s_1s_2,s_1s_2s_1s_2,s_2s_1s_2s_1s_2\}.
\end{align*}

Let $\pi^7_{\pm}=M_J(t_{\pm})$ be the $3$-dimensional representations with
\begin{align*}
(t_{\pm}^{\alpha_1^{\vee}},t_{\pm}^{\alpha_2^{\vee}})&=(q_1,\pm q_1^{-\frac{1}{2}}q_2^{\frac{1}{2}})&J^{\vee}&=\{\alpha_1^{\vee}+2\alpha_2^{\vee}\}&F_J(t_{\pm})&=\{s_2s_1s_2s_1,s_1s_2s_1s_2s_1,s_2s_1s_2s_1s_2s_1\},
\end{align*}
where we assume that $q_1\neq q_2$ and $q_1\neq q_2^3$ for $M_J(t_+)$. When $q_1=q_2$ or $q_1=q_2^3$ we define $\pi_+^7$ differently, as explained in Example~2 of Section~\ref{sect:examples}.

Let $\pi^8=M_J(t)$ be the $2$-dimensional representation with
\begin{align*}
(t^{\alpha_1^{\vee}},t^{\alpha_2^{\vee}})&=(q_1,\omega)&J^{\vee}&=\{\alpha_1^{\vee}+3\alpha_2^{\vee}\}&F_J(t)&=\{s_1s_2s_1s_2s_1,s_2s_1s_2s_1s_2s_1\}.
\end{align*}

Let $\chi_t$, $\chi_s^1$, $\chi_s^2$, $\chi^3$, $\chi^4$, $\chi^5$, $\chi^6$, $\chi^7_{\pm}$ and $\chi^8$ be the characters of the above representations.

\begin{thm} For all $h\in\scH$ we have
\begin{align*}
\Tr(h)&=\frac{1}{12q_1^3q_2^3}\iint_{\TT^2}\frac{\chi_t(h)}{|c(t)|^2}\,dt+\frac{(q_1-1)^2}{2q_1q_2^3(q_1^2-1)}\int_{\TT}\frac{\chi_s^{1}(h)}{|c_1(s)|^2}\,ds+\frac{(q_2-1)^2}{2q_1^3q_2^2(q_2^2-1)}\int_{\TT}\frac{\chi_s^{2}(h)}{|c_2(s)|^2}\,ds\\
&\qquad+A\chi^3(h)+B_+\chi^7_+(h)+B_-\chi^7_-(h)+C\chi^8(h)+|D|\times\begin{cases}
\chi^4(h)&\textrm{if $q_1<q_2^{3/2}$}\\
\chi^6(h)&\textrm{if $q_2^{3/2}<q_1<q_2^2$}\\
\chi^5(h)&\textrm{if $q_2^2<q_1$}
\end{cases}
\end{align*}
where $c(t),c_1(s),c_2(s),A,B_{\pm},C,D$ are as in Appendix~\ref{app:G2Q}. If $q_1=q_2^{3/2}$ or $q_1=q_2^2$ then the final term in the Plancherel Theorem is~$0$. 
\end{thm}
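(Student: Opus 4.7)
The plan is to mirror the strategy used in the proof of Theorem~\ref{thm:C_2Q} for $\tilde{C}_2$ with $L=Q$. Write $f(t)=f_t(h)=g(t)/d(t)$. Since $q_{w_0}=q_1^3q_2^3$ and the convergence region for $G_t(h)$ is $|t^{\alpha_1^\vee}|<q_1^{-1}$, $|t^{\alpha_2^\vee}|<q_2^{-1}$, the starting point is
\begin{align*}
\Tr(h)=\frac{1}{q_1^3q_2^3}\int_{q_1^{-1}a\TT}\int_{q_2^{-1}b\TT}\frac{f(t_1,t_2)}{c(t)c(t^{-1})}\,dt_2\,dt_1
\end{align*}
for suitable $0<a,b<1$. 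The denominator $d(t)d(t^{-1})$ contributes the six pairs $1-t^{\pm\beta^\vee}$ for $\beta\in R_0^+$, so compared to the $\tilde C_2$ case there are two extra families of $t_i$-poles (coming from $\alpha_1^\vee+2\alpha_2^\vee$ and $2\alpha_1^\vee+3\alpha_2^\vee$); it is here that the extra representation $\pi^8$ (central character involving a primitive cube root of unity) and the extra breakpoints in the parameter partition will arise.

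First I would shift the $t_2$-contour from $q_2^{-1}b\TT$ outward past $t_2=q_2^{-1}$ only (choosing the intermediate radius $c<1$ so that no other $t_2$-pole is crossed at this stage). This picks up one residue integral $I_1$ in the variable $t_1$. I would then interchange the order of integration and shift the $t_1$-contour from $q_1^{-1}a\TT$ outward past the $t_1$-poles lying inside $\TT$, which (reading off from the factors of $d(t)d(t^{-1})$) are at $t_1=q_1^{-1}$ (from $\alpha_1^\vee$), $t_1=q_1^{-1}t_2^{-2}$ (from $\alpha_1^\vee+2\alpha_2^\vee$), $t_1=q_2^{-1}t_2^{-1}$ (from $\alpha_1^\vee+\alpha_2^\vee$), $t_1=q_1^{-1}t_2^{-3}$ (from $\alpha_1^\vee+3\alpha_2^\vee$), and $t_1^2=q_1^{-1}t_2^{-3}$ split (from $2\alpha_1^\vee+3\alpha_2^\vee$). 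This produces a double integral over $\TT^2$ (which by Proposition~\ref{prop:prelim_main} matches the principal series term $\chi_t/|c(t)|^2$) plus single integrals $I_2,\ldots,I_6$ in $t_2$. After substituting $s$-variables so that each single integral is expressed through a $c_i(s)c_i(s^{-1})$ kernel, I would then shift each single contour to $\TT$; after writing each integrand in the form $(\text{polynomial})g(\cdot)/(n_i(s)n_i(s^{-1}))$ as in the $\tilde C_2$ case, the apparent singularities on $\TT$ are removable and the contour shift only needs to account for genuine poles at points of the form $s=q_1^{\pm a}q_2^{\pm b}$ (or $\omega q_?^?$).

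The constant residues produced at this final step are precisely the points $t\in(\CC^\times)^2$ with $(t_1,t_2)$ running over the central characters of the finite-dimensional representations listed. I would organise them as follows. The singleton residues at $(q_1^{-1},q_2^{-1})$ and $(q_1,\omega)$ and the two values $(q_1,\pm q_1^{-1/2}q_2^{1/2})$ give $\chi^3$, $\chi^8$ and $\chi^7_\pm$ via Lemma~\ref{lem:1dim} (for $\chi^3,\chi^7_\pm$ with $t_\pm$ regular) and via Proposition~\ref{prop:calc} applied to the calibrated module $\pi^8=M_J(t)$ at $(q_1,\omega)$. For the parameter-dependent term, the three residues that survive only in the range $q_2^{3/2}<q_1<q_2^2$ are at the three points making up $\mathrm{supp}\,\pi^6=\{wt\mid w\in F_J(t)\}$ for $(t^{\alpha_1^\vee},t^{\alpha_2^\vee})=(q_1^{-1},q_2)$, so Proposition~\ref{prop:calc} (valid here because this $t$ is regular for parameters in this open range) collapses the three $f_{wt}(h)$ into $\chi^6(h)$; in the outer ranges Lemma~\ref{lem:1dim} provides $\chi^4(h)$ or $\chi^5(h)$ at the points $(q_1,q_2^{-1})$ or $(q_1^{-1},q_2)$, with the sign of $D$ giving the factor $|D|$. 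The vanishing of this term at the walls $q_1=q_2^{3/2}$ and $q_1=q_2^2$ follows because the residue coefficient $D$ vanishes at those walls.

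The main obstacle is bookkeeping: tracking exactly which of the six coroot-denominator factors produces a $t_i$-pole in the prescribed annulus, making sure at each contour shift that the choice of intermediate radii $c$ and $a,b$ genuinely isolates one pole at a time, and verifying that the parameter-partition breakpoints $q_1=q_2^{3/2}$ and $q_1=q_2^2$ (together with the auxiliary degeneracies $q_1=q_2,q_2^3,q_1^2=q_2^3$) arise only through removable singularities in the character sums of Lemma~\ref{lem:milage}, Proposition~\ref{prop:calc} and Example~2 of Section~\ref{sect:examples}. The special case $q_1=q_2^{3/2}$ forces $\chi^7_+$ and $\chi^7_-$ to degenerate along with the coefficient $D$, and Example~2's change of basis in $M_J(t_+)$ is what ensures $\chi^7_+$ still makes sense at $q_1=q_2$ and $q_1=q_2^3$; once this is in hand, substituting the identifications into the contour-shifted formula yields the stated Plancherel decomposition.
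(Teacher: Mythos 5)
Your overall strategy is the same as the paper's: begin from (1.8), shift the $t_2$-contour past $t_2=q_2^{-1}$, interchange, shift the $t_1$-contour past the five $t_1$-poles, then shift the resulting single integrals to $\TT$ and match residues with characters via Proposition~\ref{prop:prelim_main}, Lemma~\ref{lem:1dim} and Proposition~\ref{prop:calc}. Two points need correction.

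First, a bookkeeping slip: the pole coming from $\alpha_1^\vee+2\alpha_2^\vee$ is at $t_1=q_2^{-1}t_2^{-2}$, not $q_1^{-1}t_2^{-2}$, because $\alpha_1+2\alpha_2$ lies in the $W_0$-orbit of $\alpha_2$ and so carries the parameter $q_2$. (Compare the factor $(1-q_2^{-1}t_1^{-1}t_2^{-2})$ in $c(t)$ in Appendix~\ref{app:G2Q}.) Analogously $\alpha_1^\vee+\alpha_2^\vee$ gives a $q_2$-pole and $\alpha_1^\vee+3\alpha_2^\vee$, $2\alpha_1^\vee+3\alpha_2^\vee$ give $q_1$-poles. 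This should be corrected throughout.

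Second, and more seriously, your treatment of the degenerate parameters is too optimistic. The walls of the partition where $D$ vanishes are $q_1=q_2^{3/2}$ and $q_1=q_2^2$; these are handled automatically since $D$ has removable zeros there. But the values that genuinely obstruct the argument are $q_1=q_2$ and $q_1=q_2^3$: at these values $t_+$ is no longer a regular central character, $M_J(t_+)$ is not directly defined (Example~2 of Section~\ref{sect:examples} constructs $\pi^7_+$ by a change of basis and a limit), and --- crucially --- the residue calculus itself changes, because the simple pole at $s=q^{-1/2}$ in $I_1$, $I_2$, $I_5$ becomes a double pole. It is not enough to say that Example~2's change of basis ``ensures $\chi^7_+$ still makes sense'': one must verify that the combined double-pole residue contributions actually produce $B_+\chi^7_+(h)+|D|\chi^4(h)$. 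The paper does this by a direct, brute-force check (equation~(3.4) in the $\tilde G_2$ proof), computing $g_t(T_w)$ for all $w\in W_0$ explicitly and comparing with the explicit matrices of $\pi^7_+$ and $\pi^4$ at $q_1=q_2$; an analogous check is needed at $q_1=q_2^3$. Without this step the argument is incomplete precisely on a codimension-one locus of the parameter space. Your proposal also conflates the two sets of special values ($q_2^{3/2},q_2^2$ vs.\ $q_2,q_2^3$) in the final paragraph; they play quite different roles.
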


\begin{proof}
Writing $f(t)=f_t(h)$, the trace functional is given by
$$
\Tr(h)=\frac{1}{q_1^3q_2^3}\int_{q_1^{-1}a\TT}\int_{q_2^{-1}b\TT}\frac{f(t)}{c(t)c(t^{-1})}\,dt_2dt_1
$$
with $0<a,b<1$. Choose $0<a,b<1$ both very close to $0$. Let $0<c<1$ be very close to~$1$.
Consider the inner integral. The integrand has exactly one $t_2$-pole between the contours $q_2^{-1}b\TT$ and $c\TT$, at $t_2=q_2^{-1}$. Thus we can shift the $t_2$-contour to $c\TT$ at the cost of including this residue contribution. Now interchange the order of integration in the double integral. Since $|t_2|=c$ we see that the $t_1$-poles of the integrand between the contours $q_1^{-1}a\TT$ and $\TT$ are at the points where $t_1=q_1^{-1}$, $t_1=q_1^{-1}t_2^{-3}$, $t_1=q_2^{-1}t_2^{-2}$, $t_1=q_2^{-1}t_2^{-1}$ and $t_1^2=q_1^{-1}t_2^{-3}$. After shifting the $t_1$-contour to $\TT$ we interchange the order of integration again, and since there are no $t_2$-poles between $c\TT$ and $\TT$ we shift the $t_2$-contour to $\TT$. Thus
$$
\Tr(h)=\frac{1}{q_1^3q_2^3}\iint_{\TT^2}\frac{f(t)}{|c(t)|^2}\,dt+I_1+I_2+I_3+I_4+I_5+I_6^++I_6^-
$$
where
\begin{align*}
I_1&=-\frac{1}{q_1^3q_2^3}\int_{q_1^{-1}a\TT}\Res_{t_2=z_1}\frac{f(t)}{c(t)c(t^{-1})}\,dt_1&I_6^{\pm}&=-\frac{1}{q_1^3q_2^3}\int_{c\TT}\Res_{t_1=\pm z_6}\frac{f(t)}{c(t)c(t^{-1})}dt_2\\
I_j&=-\frac{1}{q_1^3q_2^3}\int_{c\TT}\Res_{t_1=z_j}\frac{f(t)}{c(t)c(t^{-1})}dt_2& (j&=2,3,4,5),
\end{align*}
where $z_1=q_2^{-1}$, $z_2=q_1^{-1}$, $z_3=q_1^{-1}t_2^{-3}$, $z_4=q_2^{-1}t_2^{-2}$, $z_5=q_2^{-1}t_2^{-1}$, and $z_6=q_1^{-\frac{1}{2}}t_2^{-\frac{3}{2}}$

Use $\frac{1}{2}\int_{r\TT}(f(t^{1/2})+f(-t^{1/2}))dt=\int_{r^{1/2}\TT}f(t)dt$ to write $I_6^++I_6^-=I_6$ as a single integral. Straightforward calculations give
\begin{align*}
I_1&=\frac{(q_2-1)^2}{q_1^3q_2^2(q_2^2-1)}\int_{q_1^{-1}q_2^{-\frac{3}{2}}a\TT}\frac{f(q_2^{\frac{3}{2}}s,q_2^{-1})}{c_2(s)c_2(s^{-1})}\,ds 
& 
I_2&=\frac{(q_1-1)^2}{q_1q_2^3(q_1^2-1)}\int_{q_1^{-\frac{1}{2}}c\TT}\frac{f(q_1^{-1},q_1^{\frac{1}{2}}s)}{c_1(s)c_1(s^{-1})}\,ds\\
I_3&=\frac{(q_1-1)^2}{q_1q_2^3(q_1^2-1)}\int_{q_1^{\frac{1}{2}}c\TT}\frac{f(q_1^{\frac{1}{2}}s^{-3},q_1^{-\frac{1}{2}}s)}{c_1(s)c_1(s^{-1})}\,ds
&
I_4&=\frac{(q_2-1)^2}{q_1^3q_2^2(q_2^2-1)}\int_{q_2^{\frac{1}{2}}c\TT}\frac{f(s^{-2},q_2^{-\frac{1}{2}}s)}{c_2(s)c_2(s^{-1})}\,ds\\
I_5&=\frac{(q_2-1)^2}{q_1^3q_2^2(q_2^2-1)}\int_{q_2^{-\frac{1}{2}}c\TT}\frac{f(q_2^{-\frac{3}{2}}s^{-1},q_2^{\frac{1}{2}}s)}{c_2(s)c_2(s^{-1})}\,ds
&
I_6&=\frac{(q_1-1)^2}{q_1q_2^3(q_1^2-1)}\int_{c^{\frac{1}{2}}\TT}\frac{f(q_1^{-\frac{1}{2}}s^{-3},s^2)}{c_1(s)c_1(s^{-1})}\,ds
\end{align*}
where we have put $s=q_2^{-\frac{3}{2}}t_1,q_1^{-\frac{1}{2}}t_2,q_1^{\frac{1}{2}}t_2,q_2^{\frac{1}{2}}t_2,q_2^{-\frac{1}{2}}t_2,t_2$ in $I_1,I_2,I_3,I_4,I_5,I_6$ respectively. 

One now shifts each contour to $\TT$. As we discuss below, some complications arise when $q_1=q_2$ or $q_1=q_2^3$, and so suppose for now that $q_1\neq q_2$ and $q_1\neq q_2^3$. As in the $\tilde{C}_2$, $L=Q$, case the integrands of $I_1,\ldots,I_6$ are all nonsingular on~$\TT$. Moreover, assuming that $q_1\neq q_2$ and $q_1\neq q_2^3$ all singularities are simple poles, and at the special values $q_1^2=q_2^3$ or $q_1=q_2^2$ there are some removable singularities. Write $I_1^u,\ldots,I_6^u$ for the integrals over the contour~$\TT$. A lengthy analysis (using the fact that $a$ is close to $0$ and $c$ is close to $1$) gives
\begin{align*}
\Tr(h)&=\frac{1}{q_1^3q_2^3}\iint_{\TT^2}\frac{f(t)}{|c(t)|^2}\,dt+I_1^u+\cdots+I_6^u+Af(q_1^{-1},q_2^{-1})+B_+\sigma_++B_-\sigma_-\\
&\qquad+C\left(f(q_1^{-1},\omega)+f(q_1^{-1},\omega^{-1})\right)+|D|\times\begin{cases}
f(q_1,q_2^{-1})&\textrm{if $q_1<q_2^{3/2}$}\\
\sigma&\textrm{if $q_2^{3/2}<q_1<q_2^2$}\\
f(q_1^{-1},q_2)&\textrm{if $q_2^2<q_1$},
\end{cases}
\end{align*}
where $\sigma_{\pm}=f(\pm q_1^{-1/2}q_2^{3/2},q_2^{-1})+f(q_1^{-1},\pm q_1^{1/2}q_2^{-1/2})+f(\pm q_1^{1/2}q_2^{-3/2},\pm q_1^{-1/2}q_2^{1/2})$ and 
\begin{align*}
\sigma&=f(q_1^{-1}q_2^3,q_2^{-1})+f(q_1^{-1},q_1q_2^{-1})+f(q_1^2q_2^{-3},q_1^{-1}q_2)+f(q_1^{-2}q_2^3,q_1q_2^{-2})+f(q_1q_2^{-3},q_1^{-1}q_2^2).
\end{align*}
As in the previous sections it is easy to show that
$$
I_1^u+\cdots+I_6^u=\frac{(q_1-1)^2}{2q_1q_2^3(q_1^2-1)}\int_{\TT}\frac{\chi_s^{1}(h)}{|c_1(s)|^2}\,ds+\frac{(q_2-1)^2}{2q_1^3q_2^2(q_2^2-1)}\int_{\TT}\frac{\chi_s^{2}(h)}{|c_2(s)|^2}\,ds.
$$
Proposition~\ref{prop:calc} gives $f(q_1^{-1},\omega)+f(q_1^{-1},\omega^{-1})=\pi^8(h)$ and $\sigma=\pi^6(h)$ (note that $\pi^6$ only occurs for parameters $q_2^{3/2}<q_1<q_2^2$, and in this range $\pi^6$ is defined and has regular central character). We also have $\sigma_{\pm}=\pi_{\pm}^7(h)$. For $\pi_+^7$ it is important that $q_1\neq q_2$ and $q_1\neq q_2^3$, for otherwise $\pi_+^7$ does not have a regular central character and things become complicated (see below). Lemma~\ref{lem:1dim} gives $f(q_1^{-1},q_2^{-1})=\chi^3(h)$. Since we exclude $q_1=q_2$ the representation $\pi^4$ has regular central character for parameters $q_1<q_2^{3/2}$, and so Lemma~\ref{lem:1dim} gives $f(q_1,q_2^{-1})=\chi^4(h)$. Similarly $f(q_1^{-1},q_2)=\chi^5(h)$ for all $q_2^2<q_1$ with $q_1\neq q_2^3$.

It remains to discuss the cases $q_1=q_2$ and $q_1=q_2^3$. Let us briefly outline the working involved. Consider the $q_1=q_2$ case (the $q_1=q_2^3$ case is similar). 
 The integrands of $I_1,\ldots,I_6$ are still nonsingular on $\TT$, and the contours in the integrals $I_3,I_4$ and $I_6$ can all be shifted to $\TT$ without encountering any poles. This leaves $I_1,I_2$ and $I_5$ to consider. Writing $f(t)=g(t)/d(t)$ the integrands of $I_1$, $I_2$ and $I_5$ are (respectively)
\begin{align*}
\frac{qs^2(1-s^2)g(q^{\frac{3}{2}}s,q^{-1})}{(1-q)(1-q^{-\frac{1}{2}}s^{-1})^2(1+q^{-\frac{1}{2}}s^{-1})(1-q^{-\frac{5}{2}}s^{-1})(1-q^{-\frac{1}{2}}s)^2(1+q^{-\frac{1}{2}}s)(1-q^{-\frac{5}{2}}s)}\\
\frac{s^4(1-s^2)g(q^{-1},q^{\frac{1}{2}}s)}{(1-q)(1-q^{-\frac{3}{2}}s^{-3})(1-q^{-1}s^{-2})(1-q^{-\frac{3}{2}}s^{-1})(1-q^{-\frac{3}{2}}s^3)(1-q^{-1}s^2)(1-q^{-\frac{3}{2}}s)}\\
\frac{q^{-\frac{1}{2}}s(1-s^2)g(q^{-\frac{3}{2}}s^{-1},q^{\frac{1}{2}}s)}{(q-1)(1-q^{-\frac{1}{2}}s^{-1})^2(1+q^{-\frac{1}{2}}s^{-1})(1-q^{-\frac{5}{2}}s^{-1})(1-q^{-\frac{1}{2}}s)^2(1+q^{-\frac{1}{2}}s)(1-q^{-\frac{5}{2}}s)}.
\end{align*}
The relevant poles are at $s=q^{-\frac{1}{2}}$ (a double pole for $I_1$, $I_2$, and $I_5$), $s=q^{-\frac{1}{2}}$ (a single pole for $I_1$, $I_2$, and $I_5$), $s=\omega^{\pm1}q^{-\frac{1}{2}}$ (single poles for $I_2$ only), and $s=q^{-\frac{5}{2}}$ (a single pole for $I_1$ only). The residue contributions from $s=-q^{-\frac{1}{2}}$ make up the $\chi_-^7(h)$ term, the contributions from $s=\omega^{\pm1}q^{-\frac{1}{2}}$ give the $\chi^8(h)$ term, and the contribution from $s=q^{-\frac{5}{2}}$ gives the $\chi^3(h)$ term. All that remains is to analyse the contribution from the double poles of each integral at $s=q^{-\frac{1}{2}}$.

We claim that the combined residue contribution from the point $s=q^{-\frac{1}{2}}$ is
\begin{align}\label{eq:entangled}
R_1+R_2+R_5=\frac{q(q-1)^3}{6(q+1)^2(q^3-1)}\left(\chi_+^7(h)+2\chi^4(h)\right).
\end{align}
We do not have a conceptual proof of this fact, but it can be obtained by direct calculation as follows. As in Remark~\ref{rem:explicit} the functions $g(t)=g_t(h)$ can be explicitly computed (since one only needs to know the values $g_t(T_w)$ for $w\in W_0$, as $g_t(T_wx^{\lambda})=t^{\lambda}g_t(T_w)$). Then the residue contributions can be explicitly calculated (making $12$ separate calculations, one for each $h=T_wx^{\lambda}$ with $w\in W_0$). On the other hand, using the explicit matrices (Example~2 of Section~\ref{sect:examples}) for the representations $\pi_+^7$ and $\pi^4$ one can compute the expression $\chi_+^7(T_wx^{\lambda})+2\chi^4(T_wx^{\lambda})$ and compare. This completes the proof. 
\end{proof}

\subsection{The $\tilde{BC}_2(q_0,q_1,q_2)$ algebras with $L=Q$}

The root system is $R=\pm\{\alpha_1,\alpha_2,\alpha_1+\alpha_2,\alpha_1+2\alpha_2,2\alpha_2,2(\alpha_1+\alpha_2)\}$, giving dual root system $R^{\vee}=\pm\{\alpha_1^{\vee},\alpha_2^{\vee},\alpha_1^{\vee}+\alpha_2^{\vee},2\alpha_1^{\vee}+\alpha_2^{\vee},\alpha_2^{\vee}/2,\alpha_1^{\vee}+\alpha_2^{\vee}/2\}$. The affine Hecke algebra has generators $T_1,T_2,x_1=x^{\alpha_1^{\vee}}$ and $x_2=x^{\alpha_2^{\vee}/2}$ with relations
\begin{align*}
T_1^2&=1+(q_1^{\frac{1}{2}}-q_1^{-\frac{1}{2}})T_1& T_1x_1&=x_1^{-1}T_1+(q_1^{\frac{1}{2}}-q_1^{-\frac{1}{2}})(1+x_1)\\
T_2^2&=1+(q_2^{\frac{1}{2}}-q_2^{-\frac{1}{2}})T_2 & T_2x_2&=x_2^{-1}T_2+(q_2^{\frac{1}{2}}-q_2^{-\frac{1}{2}})x_2+(q_0^{\frac{1}{2}}-q_0^{-\frac{1}{2}})\\
T_1T_2T_1T_2&=T_2T_1T_2T_1 & T_2x_1&=x_1x_2^2T_2^{-1}-(q_0^{\frac{1}{2}}-q_0^{-\frac{1}{2}})x_1x_2\\
 x_1x_2&=x_2x_1 &
T_1x_2&=x_1x_2T_1^{-1}.
\end{align*}

Let $\pi_t=\mathrm{Ind}_{\CC[Q]}^{\scH}(\CC v_t)$ be the principal series representation of $\scH$ with central character $t=(t_1,t_2)\in(\CC^{\times})^2$, where $\CC v_t$ is the $1$-dimensional representation of $\CC[Q]$ with $x_1\cdot v_t=t_1v_t$ and $x_2\cdot v_t=t_2v_t$.

Let $\scH_1$ be the subalgebra generated by $T_1,x_1,x_2$ and let $\scH_2$ be the subalgebra generated by $T_2,x_1,x_2$. Let $s\in\CC^{\times}$, and let $\pi_s^{1}=\mathrm{Ind}_{\scH_1}^{\scH}(\CC u_s^1)$ and $\pi_s^j=\mathrm{Ind}_{\scH_2}^{\scH}(\CC u_s^j)$ ($j=2,3,4$) be the $4$-dimensional representations induced from the $1$-dimensional representation $\CC u_s^1$ of $\scH_1$ and the $1$-dimensional representations $\CC u_s^j$ ($j=2,3,4$) of $\scH_2$
given by
\begin{align*}
T_1\cdot u_s^1&=-q_1^{-\frac{1}{2}}u_s^1 & x_1\cdot u_s^1&=q_1^{-1}u_s^1 & x_2\cdot u_s^1&=q_1^{\frac{1}{2}}su_s^1\\
T_2\cdot u_s^2&=-q_2^{-\frac{1}{2}}u_s^2 & x_1\cdot u_s^2&=q_0^{\frac{1}{2}}q_2^{\frac{1}{2}}su_s^2 & x_2\cdot u_s^2&=q_0^{-\frac{1}{2}}q_2^{-\frac{1}{2}}u_s^2\\
T_2\cdot u_s^3&=-q_2^{-\frac{1}{2}}u_s^3 & x_1\cdot u_s^3&=q_0^{-\frac{1}{2}}q_2^{\frac{1}{2}}su_s^3 & x_2\cdot u_s^3&=-q_0^{\frac{1}{2}}q_2^{-\frac{1}{2}}u_s^3\\
T_2\cdot u_s^4&=q_2^{\frac{1}{2}}u_s^4 & x_1\cdot u_s^4&=q_0^{\frac{1}{2}}q_2^{-\frac{1}{2}}su_s^4 & x_2\cdot u_s^4&=-q_0^{-\frac{1}{2}}q_2^{\frac{1}{2}}u_s^4.
\end{align*}

Let $\pi^j$ $(j=5,\ldots,11)$ be the $1$-dimensional representations of $\scH$ with
\begin{align*}
\pi^5&=(-q_1^{-\frac{1}{2}},-q_2^{-\frac{1}{2}},q_1^{-1},q_0^{-\frac{1}{2}}q_2^{-\frac{1}{2}})&
\pi^6&=(-q_1^{-\frac{1}{2}},-q_2^{-\frac{1}{2}},q_1^{-1},-q_0^{\frac{1}{2}}q_2^{-\frac{1}{2}})\\
\pi^7&=(q_1^{\frac{1}{2}},-q_2^{-\frac{1}{2}},q_1,q_0^{-\frac{1}{2}}q_2^{-\frac{1}{2}})&\pi^8&=(q_1^{\frac{1}{2}},-q_2^{-\frac{1}{2}},q_1,-q_0^{\frac{1}{2}}q_2^{-\frac{1}{2}})\\
\pi^9&=(-q_1^{-\frac{1}{2}},q_2^{\frac{1}{2}},q_1^{-1},q_0^{\frac{1}{2}}q_2^{\frac{1}{2}})&
\pi^{10}&=(-q_1^{-\frac{1}{2}},q_2^{\frac{1}{2}},q_1^{-1},-q_0^{-\frac{1}{2}}q_2^{\frac{1}{2}})\\
\pi^{11}&=(q_1^{\frac{1}{2}},q_2^{\frac{1}{2}},q_1,-q_0^{-\frac{1}{2}}q_2^{\frac{1}{2}}),
\end{align*}
where in each case we list the quadruples $(\pi^j(T_1),\pi^j(T_2),\pi^j(x_1),\pi^j(x_2))$.

Let $\pi^{12}=M_J(s)$, $\pi^{13}=M_J(t)$, and $\pi^{14}=M_J(u)$ be the $3$-dimensional representations with
\begin{align*}
(s^{\alpha_1^{\vee}},s^{\alpha_2^{\vee}/2})=(q_1^{-1},q_0^{\frac{1}{2}}q_2^{\frac{1}{2}}),\quad (t^{\alpha_1^{\vee}},t^{\alpha_2^{\vee}/2})=(q_1^{-1},-q_0^{-\frac{1}{2}}q_2^{\frac{1}{2}}),\quad (u^{\alpha_1^{\vee}},u^{\alpha_2^{\vee}/2})=(q_1^{-1},-q_0^{\frac{1}{2}}q_2^{-\frac{1}{2}})
\end{align*}
and $J=\{\alpha_2\}$. We assume that $q_1\neq q_0q_2$ and $q_1^2\neq q_0q_2$ for $\pi^{12}$, that $q_1\neq q_0^{-1}q_2$ and $q_1^2\neq q_0^{-1}q_2$ for $\pi^{13}$, and that $q_1\neq q_0q_2^{-1}$ and $q_1^2\neq q_0q_2^{-1}$ for $\pi^{14}$, so that
$N(s)=N(t)=N(u)=\{\alpha_1,\alpha_2\}$ and $D(s)=D(t)=D(u)=\emptyset$,
and hence $F_J(s)=F_J(t)=F_J(u)=\{s_2,s_1s_2,s_2s_1s_2\}$.

Finally, let $\pi^{15}=M_J(t)$ and $\pi^{16}=M_J(u)$ be the $2$-dimensional representations with
\begin{align*}
(t^{\alpha_1^{\vee}},t^{\alpha_2^{\vee}/2})&=(-q_0,q_0^{-\frac{1}{2}}q_2^{-\frac{1}{2}})&(u^{\alpha_1^{\vee}},u^{\alpha_2^{\vee}/2})&=(-q_2,q_0^{-\frac{1}{2}}q_2^{-\frac{1}{2}})&J&=\emptyset.
\end{align*}

\begin{thm} For all $h\in\scH$ we have
\begin{align*}
\Tr(h)&=\frac{1}{8q_1^2q_2^2}\iint_{\TT^2}\frac{\chi_t(h)}{|c(t)|^2}\,dt+C_6\int_{\TT}\frac{\chi_s^1(h)}{|c_1(s)|^2}\,ds+C_7\int_{\TT}\frac{\chi_s^2(h)}{|c_2(s)|^2}\,ds+C_8\int_{\TT}\frac{\chi(h)}{|c_3(s)|^2}\,ds\\
&\quad+C_1\chi^5(h)+|C_2|\times\begin{cases}
\pi^7(h)&\textrm{if $q_1<q_0^{\frac{1}{2}}q_2^{\frac{1}{2}}$}\\
\pi^{12}(h)&\textrm{if $q_0^{\frac{1}{2}}q_2^{\frac{1}{2}}<q_1<q_0q_2$}\\
\pi^9(h)&\textrm{if $q_0q_2<q_1$}
\end{cases}\,\,+\,\,\begin{cases}X_1&\textrm{if $q_0<q_2$}\\
X_2&\textrm{if $q_2<q_0$}
\end{cases}
\end{align*}
where $\chi(h)=\chi_s^3(h)$ if $q_0<q_2$ and $\chi(h)=\chi_s^4(h)$ if $q_2<q_0$, and where
\begin{align*}
X_1&=|C_3|\chi^{15}(h)+|C_4|\chi^6(h)+|C_5|\times\begin{cases}
\chi^8(h)&\textrm{if $q_1<q_0^{-\frac{1}{2}}q_2^{\frac{1}{2}}$}\\
\chi^{13}(h)&\textrm{if $q_0^{-\frac{1}{2}}q_2^{\frac{1}{2}}<q_1<q_0^{-1}q_2$}\\
\pi^{10}(h)&\textrm{if $q_0^{-1}q_2<q_1$}
\end{cases}\\
X_2&=|C_3|\chi^{16}(h)+|C_5|\pi^{10}(h)+|C_4|\times\begin{cases}
\pi^{11}(h)&\textrm{if $q_1<q_0^{\frac{1}{2}}q_2^{-\frac{1}{2}}$}\\
\pi^{14}(h)&\textrm{if $q_0^{\frac{1}{2}}q_2^{-\frac{1}{2}}<q_1<q_0q_2^{-1}$}\\
\pi^6(h)&\textrm{if $q_0q_2^{-1}<q_1$},
\end{cases}
\end{align*}
with $c(t),c_1(s),c_2(s),c_3(s),C_1,\ldots,C_8$ as in Appendix~\ref{app:BC2Q}. 
\end{thm}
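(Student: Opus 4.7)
The plan is to follow the same residue-calculus strategy used in the $\tilde{C}_2$ and $\tilde{G}_2$ proofs, but with the initial $t_2$-contour pushed in further to reflect the $BC_n$ convergence condition. Specifically, starting from (\ref{eq:prelim_main}) with $|t_1|=q_1^{-1}a$ and $|t_2|=q_0^{-1/2}q_2^{-1/2}b$ for small $a,b>0$, I would first shift the $t_2$-contour outward to a circle $c\TT$ with $c$ slightly less than $1$ (chosen large enough to lie outside all of $q_0^{-1/2}q_2^{-1/2}$, $-q_0^{1/2}q_2^{-1/2}$, $-q_0^{-1/2}q_2^{1/2}$, $q_2^{-1}t_1^{-1}$, etc.), picking up the $t_2$-residues coming from the $R_2$-factors of $c(t^{-1})$. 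Next I would interchange the order of integration, shift the $t_1$-contour to $\TT$, picking up further residues at $t_1=q_1^{-1}$, $t_1=q_1^{-1}t_2^{-2}$, $t_1=\pm q_0^{\mp 1/2}q_2^{-1/2}t_2^{-1}$, etc., and finally shift $t_2$ back to $\TT$ in the remaining double integral (no poles are crossed at this last step). Each of the resulting residue integrals is a single-variable integral whose contour must then be shifted to $\TT$, each such shift potentially producing further constant residues $f_z(h)$ at points $z\in\Hom(Q,\CC^\times)$.

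The second step is to match the resulting expression to characters. The double integral becomes $\frac{1}{8}\int_{\TT^2}\chi_t(h)/|c(t)|^2\,dt$ via Proposition~\ref{prop:prelim_main}. For the single integrals I would establish, exactly as in Lemma~\ref{lem:C2Q}, the identities
\begin{align*}
\chi_s^j(h)=\sum_{w\in F_j}f_{w\cdot t^{(j)}}(h)\qquad(j=1,2,3,4)
\end{align*}
where $t^{(j)}$ is the central character of $\pi_s^j$ and $F_j$ is the appropriate four-element subset of $W_0$; the proof is the same since in each case $\tau_i\otimes u_s^j=0$ by (\ref{eq:tausquared}) so the diagonal entries of the matrices representing $\tau_w$ vanish, and Lemma~\ref{lem:milage} then applies after polynomial extension in $s$. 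Using these identities and the substitutions $s\mapsto s^{-1}$ on each piece of the integrand pairs up the four terms correctly. The constant terms are then matched: the $1$-dimensional $\chi^5,\ldots,\chi^{11}$ are recovered via Lemma~\ref{lem:1dim}, while the sums of three residues corresponding to the $F_J$-orbits in the $(i,j)$-regular ranges of parameters are matched to $\chi^{12},\chi^{13},\chi^{14}$ via Proposition~\ref{prop:calc}, and the two-residue sums for $\pi^{15},\pi^{16}$ are matched similarly (here $N(t)=\{\alpha_1,\alpha_1+2\alpha_2\}$ so $F_\emptyset(t)$ has two elements).

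The main obstacle is bookkeeping the case split. Unlike the $\tilde{C}_2$ and $\tilde{G}_2$ cases there are now three independent parameters, and the relative sizes of $q_0,q_1,q_2$ determine (i) which of $\pi_s^3$ or $\pi_s^4$ contributes to the integral part (governed by whether $-q_0^{1/2}q_2^{-1/2}$ or $-q_0^{-1/2}q_2^{1/2}$ crosses $\TT$ during the $t_2$-shift, i.e.\ by $q_0\lessgtr q_2$), and (ii) which constant $1$- or higher-dimensional representation survives in each of the cascades $q_1$ vs.\ $q_0^{\pm 1/2}q_2^{\pm 1/2}$ vs.\ $q_0^{\pm 1}q_2^{\pm 1}$. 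In each sub-range one checks that the central character of the claimed representation is regular (so Proposition~\ref{prop:calc} and Lemma~\ref{lem:1dim} actually apply), that the associated $F_J(t)$ matches the set of residue points, and that the boundary cases (e.g.\ $q_1=q_0q_2$, $q_1^2=q_0q_2$, etc.) yield cancellations or removable singularities so the answer extends continuously. With this organised, the argument reduces to the same contour/residue template as the $\tilde{C}_2$, $L=Q$ case, and the constants $C_1,\ldots,C_8$ appear as the explicit residue multipliers.
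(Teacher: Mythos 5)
Your proposal is correct and follows essentially the same contour-shift-and-residue strategy that the paper uses: shift $t_2$ outward to $c\TT$ (with the residues at $t_2=-q_0^{1/2}q_2^{-1/2}$ and $t_2=-q_0^{-1/2}q_2^{1/2}$ splitting according to $q_0\lessgtr q_2$), interchange, shift $t_1$ to $\TT$, then shift each resulting one-variable integral $I_j$ to $\TT$ and match the surviving residues to characters via Lemma~\ref{lem:milage}, Proposition~\ref{prop:calc}, and Lemma~\ref{lem:1dim}; this is precisely the structure of the paper's proof, which writes out $I_1,\ldots,I_6$ and their primed variants explicitly and then omits the remaining bookkeeping. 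One small slip in your parenthetical: for $\pi^{15}$ (and analogously $\pi^{16}$) the weight $t$ with $t^{\alpha_1^{\vee}}=-q_0$, $t^{\alpha_2^{\vee}/2}=q_0^{-1/2}q_2^{-1/2}$ has $N(t)=\{\alpha_2,\alpha_1+\alpha_2\}$ (the two roots of $R_2^+$), not $\{\alpha_1,\alpha_1+2\alpha_2\}$; the conclusion $F_{\emptyset}(t)=\{1,s_1\}$ is nonetheless correct.
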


\begin{proof}
The series $G_t(h)$ converges for $|t_1|<q_1^{-1}$ and $|t_2|<q_0^{-\frac{1}{2}}q_2^{-\frac{1}{2}}$, and so writing $f(t)=f_t(h)$ we have
$$
\Tr(h)=\frac{1}{q_1^2q_2^2}\int_{q_1^{-1}a\TT}\int_{q_0^{-\frac{1}{2}}q_2^{-\frac{1}{2}}b\TT}\frac{f(t)}{c(t)c(t^{-1})}\,dt_2dt_1
$$
whenever $0<a,b<1$. We choose $a$ and $b$ both very close to $0$, and choose $0<c<1$ very close to~$1$. 
The $t_2$-poles of the integrand between the contour $q_0^{-\frac{1}{2}}q_2^{-\frac{1}{2}}b\TT$ and $c\TT$ are at $t_2=q_0^{-\frac{1}{2}}q_2^{-\frac{1}{2}}$, at $t_2=-q_0^{\frac{1}{2}}q_2^{-\frac{1}{2}}$ (if $q_0<q_2$) and at $t_2=-q_0^{-\frac{1}{2}}q_2^{\frac{1}{2}}$ (if $q_2<q_1$). Thus we can shift the $t_2$-contour to $c\TT$ at the cost of residue contributions from the above points. 
Now interchange the order of integration in the double integral. The $t_1$-poles of the integrand between $q_1^{-1}a\TT$ and $\TT$ are at $t_1=q_1^{-1}$, $t_1=q_1^{-1}t_2^{-2}$, $t_1=q_0^{-\frac{1}{2}}q_2^{-\frac{1}{2}}t_2^{-1}$, $t_1=-q_0^{\frac{1}{2}}q_2^{-\frac{1}{2}}t_2^{-1}$ (if $q_0<q_2$) and $t_1=-q_0^{-\frac{1}{2}}q_2^{\frac{1}{2}}t_2^{-1}$ (if $q_2<q_0$). Computing the associated residues gives
$$
\Tr(h)=\frac{1}{q_1^2q_2^2}\int_{\TT^2}\frac{f(t)}{|c(t)|^2}\,dt+I_1+I_2+I_3+I_4+\begin{cases}I_5+I_6&\textrm{if $q_0<q_2$}\\
I_5'+I_6'&\textrm{if $q_2<q_0$},
\end{cases}
$$
where
\begin{align*}
I_1&=\frac{q_0q_2-1}{q_1^2q_2(q_0+1)(q_2+1)}\int_{q_0^{-\frac{1}{2}}q_1^{-1}q_2^{-\frac{1}{2}}a\TT}\frac{f(q_0^{\frac{1}{2}}q_2^{\frac{1}{2}}s,q_0^{-\frac{1}{2}}q_2^{-\frac{1}{2}})}{c_2(s)c_2(s^{-1})}\,ds&&\textrm{where $s=q_0^{-\frac{1}{2}}q_2^{-\frac{1}{2}}t_1$}\\
I_2&=\frac{q_1-1}{q_1q_2^2(q_1+1)}\int_{q_1^{-\frac{1}{2}}c\TT}\frac{f(q_1^{-1},q_1^{\frac{1}{2}}s)}{c_1(s)c_1(s^{-1})}\,ds&&\textrm{where $s=q_1^{-\frac{1}{2}}t_2$}\\
I_3&=\frac{q_1-1}{q_1q_2^2(q_1+1)}\int_{q_1^{\frac{1}{2}}c\TT}\frac{f(s^{-2},q_1^{-\frac{1}{2}}s)}{c_1(s)c_1(s^{-1})}\,ds&&\textrm{where $s=q_1^{\frac{1}{2}}t_2$}\\
I_4&=\frac{q_0q_2-1}{q_1^2q_2(q_0+1)(q_2+1)}\int_{c\TT}\frac{f(q_0^{-\frac{1}{2}}q_2^{-\frac{1}{2}}s^{-1},s)}{c_2(s)c_2(s^{-1})}\,ds&&\textrm{where $s=t_2$}\\
I_5&=\frac{q_2-q_0}{q_1^2q_2(q_0+1)(q_2+1)}\int_{q_0^{\frac{1}{2}}q_1^{-1}q_2^{-\frac{1}{2}}a\TT}\frac{f(q_0^{-\frac{1}{2}}q_2^{\frac{1}{2}}s,-q_0^{\frac{1}{2}}q_2^{-\frac{1}{2}})}{c_3(s)c_3(s^{-1})}\,ds&&\textrm{where $s=q_0^{\frac{1}{2}}q_2^{-\frac{1}{2}}t_1$}\\
I_5'&=\frac{q_0-q_2}{q_1^2q_2(q_0+1)(q_2+1)}\int_{q_0^{-\frac{1}{2}}q_1^{-1}q_2^{\frac{1}{2}}a\TT}\frac{f(q_0^{\frac{1}{2}}q_2^{-\frac{1}{2}}s,-q_0^{-\frac{1}{2}}q_2^{\frac{1}{2}})}{c_3(s)c_3(s^{-1})}\,ds&&\textrm{where $s=q_0^{-\frac{1}{2}}q_2^{\frac{1}{2}}t_1$}\\
I_6&=\frac{q_2-q_0}{q_1^2q_2(q_0+1)(q_2+1)}\int_{c\TT}\frac{f(q_0^{\frac{1}{2}}q_2^{-\frac{1}{2}}s^{-1},-s)}{c_3(s)c_3(s^{-1})}\,ds&&\textrm{where $s=-t_2$}\\
I_6'&=\frac{q_0-q_2}{q_1^2q_2(q_0+1)(q_2+1)}\int_{c\TT}\frac{f(q_0^{-\frac{1}{2}}q_2^{\frac{1}{2}}s^{-1},-s)}{c_3(s)c_3(s^{-1})}\,ds&&\textrm{where $s=-t_2$}.
\end{align*}
Now shift the contours in all integrals $I_j$, $I_j'$ to $\TT$. We omit the details of this long calculation.
\end{proof}

\begin{appendix}

\newpage

\section{Constants and $c$-functions}

We write $\sigma_1(x)=1+x$ and $\sigma_2(x)=1+x+x^2$.

\subsection{$\tilde{C}_2(q_1,q_2)$ algebras with $L=Q$}\label{app:C2Q}

$$
c(t)=\frac{(1-q_1^{-1}t_1^{-1})(1-q_1^{-1}t_1^{-1}t_2^{-2})(1-q_2^{-1}t_2^{-1})(1-q_2^{-1}t_1^{-1}t_2^{-1})}{(1-t_1^{-1})(1-t_1^{-1}t_2^{-2})(1-t_2^{-1})(1-t_1^{-1}t_2^{-1})}
$$
$$
c_1(s)=\frac{(1+q_1^{-\frac{1}{2}}s^{-1})(1-q_1^{-\frac{1}{2}}q_2^{-1}s^{-1})(1-q_1^{\frac{1}{2}}q_2^{-1}s^{-1})}{(1-s^{-2})(1-q_1^{\frac{1}{2}}s^{-1})} \quad
c_2(s)=\frac{(1-q_1^{-1}q_2^{-1}s^{-1})(1-q_1^{-1}q_2s^{-1})}{(1-s^{-1})(1-q_2s^{-1})}
$$

\begin{align*}
A&=\frac{(q_1q_2-1)(q_1q_2^2-1)}{\sigma_1(q_1)\sigma_1(q_2)^2\sigma_1(q_1q_2)}&B&=\frac{2q_2(q_1-1)^2}{\sigma_1(q_2)^2\sigma_1(q_1q_2^{-1})\sigma_1(q_1q_2)}&
C&=\frac{(q_1q_2^{-1}-1)(1-q_1q_2^{-2})}{\sigma_1(q_1)\sigma_1(q_2^{-1})^2\sigma_1(q_1q_2^{-1})}.
\end{align*}

\subsection{$\tilde{C}_2(q_1,q_2)$ algebras with $L=P$}\label{app:C2P}

$$
c(t)=\frac{(1-q_1^{-1}t_1^{-2}t_2^2)(1-q_1^{-1}t_2^{-2})(1-q_2^{-1}t_1t_2^{-2})(1-q_2^{-1}t_1^{-1})}{(1-t_1^{-2}t_2^2)(1-t_2^{-2})(1-t_1t_2^{-2})(1-t_1^{-1})}
$$
$$
c_1(s)=\frac{(1+q_1^{-\frac{1}{2}}s^{-1})(1-q_1^{-\frac{1}{2}}q_2^{-1}s^{-1})(1-q_1^{\frac{1}{2}}q_2^{-1}s^{-1})}{(1-s^{-2})(1-q_1^{\frac{1}{2}}s^{-1})} \quad
c_2(s)=\frac{(1-q_1^{-1}q_2^{-1}s^{-2})(1-q_1^{-1}q_2s^{-2})}{(1-s^{-2})(1-q_2s^{-2})}
$$

\subsection{$\tilde{G}_2(q_1,q_2)$ algebras with $L=Q$}\label{app:G2Q}

\begin{align*}
c(t)&=\frac{(1-q_1^{-1}t_1^{-1})(1-q_1^{-1}t_1^{-2}t_2^{-3})(1-q_1^{-1}t_1^{-1}t_2^{-3})(1-q_2^{-1}t_2^{-1})(1-q_2^{-1}t_1^{-1}t_2^{-2})(1-q_2^{-1}t_1^{-1}t_2^{-1})}{(1-t_1^{-1})(1-t_1^{-2}t_2^{-3})(1-t_1^{-1}t_2^{-3})(1-t_2^{-1})(1-t_1^{-1}t_2^{-2})(1-t_1^{-1}t_2^{-1})}\\
c_1(s)&=\frac{(1-q_1^{-\frac{1}{2}}\omega s^{-1})(1-q_1^{-\frac{1}{2}}\omega^{-1}s^{-1})(1-q_2^{-1}s^{-2})(1-q_1^{-\frac{1}{2}}q_2^{-1}s^{-1})(1-q_1^{\frac{1}{2}}q_2^{-1}s^{-1})}{(1-s^{-2})(1-q_1^{-\frac{1}{2}}s^{-1})(1-q_1^{\frac{1}{2}}s^{-3})}\\
c_2(s)&=\frac{(1-q_1^{-1}s^{-2})(1-q_1^{-1}q_2^{-\frac{3}{2}}s^{-1})(1-q_1^{-1}q_2^{\frac{3}{2}}s^{-1})}{(1-s^{-2})(1-q_2^{\frac{3}{2}}s^{-1})(1-q_2^{\frac{1}{2}}s^{-1})}
\end{align*}
\begin{align*}
A&=\frac{(q_1q_2^2-1)(q_1^2q_2^3-1)}{\sigma_1(q_1)\sigma_1(q_2)\sigma_2(q_2)\sigma_2(q_1q_2)}&
B_{\pm}&=\frac{q_1(q_1-1)(q_2-1)}{2\sigma_1(q_1)\sigma_1(q_2)\sigma_2(\pm\sqrt{q_1/q_2})\sigma_2(\pm\sqrt{q_1q_2})}\\
C&=\frac{q_2(q_1-1)(q_1^3-1)}{\sigma_2(q_2)\sigma_2(q_1q_2^{-1})\sigma_2(q_1q_2)}&
D&=\frac{(1-q_1q_2^{-2})(q_1^2q_2^{-3}-1)}{\sigma_1(q_1)\sigma_1(q_2^{-1})\sigma_2(q_2^{-1})\sigma_2(q_1q_2^{-1})}.
\end{align*}

\subsection{$\tilde{BC}_2(q_0,q_1,q_2)$ algebras with $L=Q$}\label{app:BC2Q}

\begin{align*}
c(t)&=\frac{(1-q_1^{-1}t_1^{-1})(1-q_1^{-1}t_1^{-1}t_2^{-2})(1-a^{-1}t_1^{-1}t_2^{-1})(1+b^{-1}t_1^{-1}t_2^{-1})(1-a^{-1}t_2^{-1})(1+b^{-1}t_2^{-1})}{(1-t_1^{-1})(1-t_1^{-1}t_2^{-2})(1-t_1^{-2}t_2^{-2})(1-t_2^{-2})}\\
c_1(s)&=\frac{(1-q_0^{-\frac{1}{2}}q_1^{-\frac{1}{2}}q_2^{-\frac{1}{2}}s^{-1})(1+q_0^{\frac{1}{2}}q_1^{-\frac{1}{2}}q_2^{-\frac{1}{2}}s^{-1})(1-q_0^{-\frac{1}{2}}q_1^{\frac{1}{2}}q_2^{-\frac{1}{2}}s^{-1})(1+q_0^{\frac{1}{2}}q_1^{\frac{1}{2}}q_2^{-\frac{1}{2}}s^{-1})}{(1-s^{-2})(1-q_1s^{-2})}\\
c_2(s)&=\frac{(1+q_0^{\frac{1}{2}}q_2^{-\frac{1}{2}}s^{-1})(1-q_0^{-\frac{1}{2}}q_1^{-1}q_2^{-\frac{1}{2}}s^{-1})(1-q_0^{\frac{1}{2}}q_1^{-1}q_2^{\frac{1}{2}}s^{-1})}{(1-s^{-2})(1-q_0^{\frac{1}{2}}q_2^{\frac{1}{2}}s^{-1})}\\
c_3(s)&=\frac{(1+q_0^{-\frac{1}{2}}q_2^{-\frac{1}{2}}s^{-1})(1-q_0^{\frac{1}{2}}q_1^{-1}q_2^{-\frac{1}{2}}s^{-1})(1-q_0^{-\frac{1}{2}}q_1^{-1}q_2^{\frac{1}{2}}s^{-1})}{(1-s^{-2})(1-q_0^{-\frac{1}{2}}q_2^{\frac{1}{2}}s^{-1})}
\end{align*}
where $a=q_0^{\frac{1}{2}}q_2^{\frac{1}{2}}$ and $b=q_0^{-\frac{1}{2}}q_2^{\frac{1}{2}}$. 
\begin{align*}
C_1&=\frac{(q_0q_1q_2-1)(q_0q_1^2q_2-1)}{\sigma_1(q_0)\sigma_1(q_1)\sigma_1(q_2)\sigma_1(q_0q_1)\sigma_1(q_1q_2)}&C_3&=\frac{(q_2-q_0)(q_0q_2-1)}{\sigma_1(q_0q_1^{-1})\sigma_1(q_1^{-1}q_2)\sigma_1(q_0q_1)\sigma_1(q_1q_2)}
\end{align*}
and $C_2=-C_1(q_0^{-1},q_1,q_2^{-1})$, $C_4=C_1(q_0^{-1},q_1,q_2)$ and $C_5=-C_1(q_0,q_1,q_2^{-1})$. Finally,
\begin{align*}
C_6&=\frac{q_1-1}{2q_1q_2^2(q_1+1)}&C_7&=\frac{q_0q_2-1}{2q_1^2q_2(q_0+1)(q_2+1)}&C_8&=\frac{|q_2-q_0|}{2q_1^2q_2(q_0+1)(q_2+1)}.
\end{align*}

\end{appendix}

\end{document}